\documentclass[a4paper]{amsart}
\makeatletter
\usepackage{geometry,aliascnt}
\usepackage[symbol]{footmisc}
\usepackage{microtype}
\usepackage{graphicx}
\usepackage[all]{xy}
\usepackage{tikz}
\usetikzlibrary{shapes.geometric}
\usepackage[utf8x]{inputenc} 
\usepackage{enumerate,caption}
\usepackage{enumitem}
\usepackage{algorithm}
\usepackage{algpseudocode}
\algrenewcommand\algorithmicrequire{\textbf{Input:}}
\algrenewcommand\algorithmicensure{\textbf{Output:}}
\usepackage{xspace}
\usepackage[modulo]{lineno}
\usepackage[linktocpage,colorlinks=true,linkcolor=blue,citecolor=blue,urlcolor=blue,citebordercolor={0
  0 1},urlbordercolor={0 0 1},linkbordercolor={0 0 1}]{hyperref} 
\usepackage[nameinlink]{cleveref}
\newgeometry{margin=1.7in}
\crefname{diagram}{Diagram}{Diagram}
\usepackage[normalem]{ulem}
\usepackage{soul}
\numberwithin{equation}{section}
\usepackage{amssymb,latexsym,amsmath,amsthm,mathrsfs}
\usepackage{tikz-cd,color}
\theoremstyle{plain}
\newtheorem{theorem}{Theorem}[section]
\newtheorem{corollary}[theorem]{Corollary}
\newtheorem{lemma}[theorem]{Lemma}

\newtheorem{proposition}[theorem]{Proposition}

\newtheorem*{thm*}{Theorem}
\newtheorem{thmx}{Theorem}

\theoremstyle{definition}

\newtheorem{remark}[theorem]{Remark}

\newtheorem{definition}[theorem]{Definition}

\newtheorem{example}[theorem]{Example}

\newenvironment{notation}[1][Notation]
{\begin{trivlist} \item[\hskip \labelsep {\bfseries #1}]}
{\end{trivlist}}
\title[]{A deterministic algorithm for Harder-Narasimhan filtrations for representations of acyclic quivers}
\author{chi-yu Cheng}
\date{}

\DeclareMathOperator{\Proj}{Proj}

\DeclareMathOperator{\Hom}{Hom}

\DeclareMathOperator{\Rep}{Rep}
\DeclareMathOperator{\GL}{GL}
\DeclareMathOperator{\Endo}{End}

\DeclareMathOperator{\Sp}{Sp}
\DeclareMathOperator{\Span}{Span}
\DeclareMathOperator{\disc}{disc}

\begin{document}
\subjclass[2010]{14Q20}
\keywords{Geometric invariant theory, Harder-Narasimhan filtration, representation of a quiver, weight stability, slope stability, discrepancy.}
\begin{abstract}
    Let $M$ be a representation of an acyclic quiver $Q$ over an infinite field $k$. We establish a deterministic algorithm for computing the Harder-Narasimhan filtration of $M$. The algorithm is polynomial in the dimensions of $M$, the weights that induce the Harder-Narasimhan filtration of $M$, and the number of paths in $Q$. As a direct application, we also show that when $k$ is algebraically closed and when $M$ is unstable, the same algorithm produces Kempf's maximally destabilizing one parameter subgroups for $M$.
\end{abstract}
\maketitle
\tableofcontents
\section{Introduction}

Our goal in this paper is to provide a constructive approach to Harder-Narasimhan filtrations of representations of acyclic quivers. Specifically, we first establish in \Cref{HN_contains_Witness_to_disc_intro} a link between the Harder-Narasimhan filtration and the \emph{discrepancy} of a representation. The later notion, introduced in \cite{MR4238986}, allows us to apply algebraic complexity tools to the study of Harder-Narasimhan filtrations of representations of acyclic quivers. The main result, Theorem 2 of \cite{MR4238986} is a deterministic, polynomial time algorithm for finding witnesses to the discrepancies of representations of bipartite quivers. Huszar then extended the result to acyclic quivers in \cite{https://doi.org/10.48550/arxiv.2111.00039} in Proposition 3.5 (\Cref{discrepancy_rep_algorithm} in our paper). Our first main result \Cref{HN_contains_Witness_to_disc_intro} affirms that for any representation, its Harder-Narasimhan filtration has a term that witnesses its discrepancy. So the next natural question to ask is that is there also a systematic way to compute Harder-Narasimhan filtrations? Combining \Cref{discrepancy_rep_algorithm} and \Cref{HN_contains_Witness_to_disc_intro}, we are able to establish a deterministic algorithm, \Cref{HN_alg_psuedo_code} in \Cref{Effective_alg_intro} that computes the Harder-Narasimhan filtration of a representation of an acyclic quiver.  We would like to point out that \Cref{HN_alg_psuedo_code} is not polynomial in the number of edges of the quiver, but in the number of paths (see \Cref{thmB_part_I}). When the quiver is bipartite where arrows only go from one partite to the other, the number of arrows equals that of the paths. In this case \Cref{HN_alg_psuedo_code} indeed has polynomial time complexity.  

This paper was originally motivated by the relations between "maximally destabilizing subobjects" under different stability conditions in the case of representations of quivers. Stability plays an important role in algebraic geometry for constructing moduli spaces of algebro-geometric objects. In the case of representations of quivers, two commonly used stability conditions are weight stability and slope stability. We now briefly introduce the two stability conditions, and we shall see that failing either one of them would require some subobject that contradicts the condition. We let $Q$ be an acyclic quiver.\footnote[2]{The assumption that the quiver is acyclic is not required to define either stability condition. Nevertheless, we will stick to acyclic quivers throughout the paper for consistency, as the algorithm deals with acyclic quivers only.}  That is, $Q$ does not have an oriented cycle, and we let $Q_0$ denote the set of vertices of $Q$. 

Weight stability was formulated in \cite{MR1315461} to construct moduli of representations of finite dimensional algebras. King considered representations of $Q$ of a fixed dimension $\mathbf{d}\in\mathbf{N}^{Q_0}$, and a $\mathbf{Z}$-linear weight function $\theta:\mathbf{Z}^{Q_{0}}\rightarrow\mathbf{Z}$ such that $\theta(\mathbf{d})=0$. For a representation $M$ of $Q$, we let $\pmb{\dim}M\in \mathbf{N}^{Q_{0}}$ be its dimension vector. By $\theta(M)$ we mean $\theta(\pmb{\dim}M).$ King defined that for a representation $M$ of dimension $\mathbf{d}$, $M$ is $\theta$-\emph{semistable} if and only if  $\theta(M^{\prime})\leq \theta(M)=0$ for every subrepresentation $M^{\prime}$ of $M$. Otherwise, $M$ is $\theta$-\emph{unstable}. Namely, there is a subrepresntation $M^{\prime}$ of $M$ such that $\theta(M^{\prime})>0.$ In \cite{MR4238986}, the \emph{discrepancy} of any representation $M$ \emph{with respect to }$\theta$ is defined to be the number  
$$\disc(M,\theta) = \max _{M^{\prime}\subseteq M}\theta(M^{\prime}),$$ where $M^{\prime}\subseteq M$ means $M'$ is a subrepresentation of $M$. We see that if $M$ is $\theta$-unstable, then the subrepresentation that witnesses  $\disc(M,\theta)$ contradicts $\theta$-semistability of $M$ the most.

On the other hand, weight stability is in fact a reinterpretation of the Hilbert-Mumford criterion (\Cref{HilbMumAffine}).  When $M$ is $\theta$-unstable, the original Hilbert-Mumford criterion states that there is a one parameter subgroup (of the group to be defined in \Cref{the_quiver_setting}) that contradicts $\theta$-semistability for $M$. Kempf showed in \cite{MR506989} that among all destabilizing one parameter subgroups, there is a unique indivisible one, up to conjugation by  a parabolic subgroup, that maximally contradicts $\theta$-semistability in the numerical sense that will be made precise in  \Cref{from_HN_to_OPS}. The maximally destabilizing one parameter subgroups of an unstable representation induce a unique filtration, which was referred to as Kempf filtration in \cite{MR3199484}. The other type of filtration, known as the Harder-Narasimhan filtration, comes from slope instability.

Slope stability depends on two weights $\Theta,\kappa:\mathbf{Z}^{Q_0}\rightarrow\mathbf{Z}$, where $\kappa((\mathbf{Z}^{+})^{Q_0})>0$. The slope $\mu$ of a nonzero representation $M$ is defined as $$\mu(M)=\frac{\Theta(M)}{\kappa(M)}.$$ Unlike weight stability, slope stability is defined for any nonzero representation, whether its slope is zero or not. In \cite{MR1906875}, a nonzero representation $M$ is said to be $\mu$-semistable if $\mu(M')\leq \mu(M)\text{ for all }0\neq M'\subseteq M.$ In \Cref{stability_conditions_for_representations_of_quivers}, we recall in \Cref{connecting_slope_weight_stability} a  way to go between weight and slope stability. In any case,  we shall stick to the following convention throughout the paper, whose convenience will become apparent later: If a statement only involves weight stability, we shall use the lower case $\theta$ as the weight. When slope stability is involved, we use the upper case $\Theta$ as the numerator of the slope. 

The Harder-Narasimhan filtration of a representation $M$ with respect to $\mu$ is constructed inductively in \cite{MR1906875}. As the first step, set $M_1$ as the unique subrepresentation that is maximal among all subrepresentations having the highest slope. Such a subrepresentation is called the \emph{strongly contradicting semistability }(abbreviated as scss) \emph{subrepresentation of }$M$. One then goes on to set $M_{i+1}/M_{i}$ as the scss subrepresentation of $M/M_i$ until $M/M_i$ is $\mu$-semistable. The resulting filtration $0\subsetneq M_1\subsetneq \cdots\subsetneq M_r=M$ is called the \emph{Harder-Narasimhan filtration} of $M$.

We now have the following maximally destabilizing subobjects associated to an unstable representation $M$:
\begin{enumerate}
    \item The discrepancy of $M$, and its witnessing subrepresentations (not necessarily unique).
    \item The scss subrepresentation of $M$, and the Harder-Narasimhan filtration of $M$.
    \item The one parameter subgroups that maximally contradict the semistability of $M$, and the Kempf filtration induced by those one parameter subgroups. 
\end{enumerate}

The relation between (2) and (3) is already unraveled by the paper \cite{MR3199484}. The main result is that for an unstable representation, its Harder-Narasimhan filtration coincides with its Kempf filtration. The first of our two main results \Cref{HN_contains_Witness_to_disc_intro} establishes a link between (1) and (2). The second main result \Cref{Effective_alg_intro} establishes a deterministic algorithm (\Cref{HN_alg_psuedo_code}) for computing the Harder-Narasimhan filtration and therefore the  maximally destabilizing one parameter subgroups of any unstable representation. 

\subsection{The main results}
We let $Q$ be an acyclic quiver. 
\begin{thmx}[\Cref{discrep_contains_1st_HN_term}, \Cref{HN_contains_witness_discrepancy},  \Cref{HN_contains_witness_discrepancy_corollary}]\label{HN_contains_Witness_to_disc_intro}
Let $\Theta$ be a weight and let $M$ be a representation of $Q$ with $\Theta(M) = 0$. For any weight $\kappa$ with $\kappa((\mathbf{Z}^{+})^{Q_0})>0$, if $\mu=\Theta/\kappa$ is the slope function, then any subrepresentation $M'$ with $\Theta(M')=\disc(M,\Theta)$ contains the scss subrepresentation of $M$ (with respect to $\mu$). Moreover, if $$0\subsetneq M_{1}\subsetneq \cdots \subsetneq M_{r-1}\subsetneq M_{r}=M$$ is the Harder-Narasimhan filtration of $M$ (with respect to $\mu$), then there is an $M_{l}$ in the filtration such that $\Theta(M_l)=\disc(M,\Theta)$. 
\end{thmx}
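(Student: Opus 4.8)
The plan is to prove the two assertions in sequence, exploiting the standard comparison between the weight $\Theta$ (with $\Theta(M)=0$) and the slope $\mu = \Theta/\kappa$.

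\textbf{Step 1: The scss subrepresentation sits inside every discrepancy witness.} Let $N \subseteq M$ be the scss subrepresentation, i.e.\ the maximal subrepresentation of largest slope, and let $M'$ be any subrepresentation with $\Theta(M') = \disc(M,\Theta)$. I would argue that $N \subseteq M'$ by considering $N + M'$ and $N \cap M'$ inside $M$. Since $\Theta$ is additive on short exact sequences, $\Theta(N+M') + \Theta(N \cap M') = \Theta(N) + \Theta(M')$, and the same holds for $\kappa$. Because $M'$ is a discrepancy witness, $\Theta(N + M') \le \Theta(M')$, hence $\Theta(N \cap M') \ge \Theta(N)$. The key point is to upgrade this to a statement about slopes: $\mu(N) \ge \mu(M)$ when $\Theta(M)=0$ is equivalent to $\Theta(N) \ge 0$ (since $\kappa(N) > 0$), and I want to show $N \cap M'$ has slope at least that of $N$, which by maximality of $N$ forces $N \cap M' = N$. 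This is where the interplay of $\Theta$ and $\kappa$ matters: I expect to need the elementary observation that if $\Theta(N) = \mu(M)\kappa(N)$ is the maximal value of $\Theta(L) - \mu(M)\kappa(L)$ over all $L \subseteq M$ (which holds precisely because $N$ is scss and $\Theta(M) = 0 = \mu(M)\kappa(M)$), then the submodularity inequality above, rewritten for the functional $L \mapsto \Theta(L) - \mu(M)\kappa(L)$, pins down $N \cap M'$. Once $N \cap M' = N$ we get $N \subseteq M'$, which is the first claim and matches \Cref{discrep_contains_1st_HN_term}.

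\textbf{Step 2: Some HN term is a discrepancy witness.} For the second assertion I would induct on the length $r$ of the Harder--Narasimhan filtration. If $M$ is $\mu$-semistable then $\disc(M,\Theta) = 0 = \Theta(M)$ and $M_r = M$ works (also $M_0 = 0$ works). Otherwise, let $M_1 = N$ be the scss subrepresentation from Step 1; by Step 1, any discrepancy witness $M'$ contains $M_1$. Then $M'/M_1 \subseteq M/M_1$ and $\Theta(M'/M_1) = \Theta(M') - \Theta(M_1) = \disc(M,\Theta) - \Theta(M_1)$. I would show this equals $\disc(M/M_1, \Theta)$: the inequality $\le$ is immediate since any subrepresentation of $M/M_1$ lifts to one of $M$ containing $M_1$, and conversely given a witness $\bar L$ for $\disc(M/M_1,\Theta)$ its preimage $L \supseteq M_1$ in $M$ has $\Theta(L) = \Theta(M_1) + \Theta(\bar L)$, and one checks $\Theta(L) \le \disc(M,\Theta)$ together with $\Theta(M_1) + \disc(M/M_1,\Theta) \ge \disc(M,\Theta)$ via the witness $M'$. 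Hence the Harder--Narasimhan filtration of $M/M_1$, which is $0 \subsetneq M_2/M_1 \subsetneq \cdots \subsetneq M_r/M_1$, has by the inductive hypothesis a term $M_l/M_1$ with $\Theta(M_l/M_1) = \disc(M/M_1,\Theta)$; then $\Theta(M_l) = \Theta(M_1) + \disc(M/M_1,\Theta) = \disc(M,\Theta)$, as desired. A subtlety to check here is that $\Theta(M/M_1) $ need not be $0$, so strictly speaking $\disc(M/M_1,\Theta)$ is being used in the generalized sense $\max_{\bar L} \Theta(\bar L)$; I would either state Step 2 for this generalized discrepancy from the start, or normalize by replacing $\Theta$ with $\Theta - \Theta(M/M_1)\kappa/\kappa(M/M_1)$ at each stage (this is harmless because adding a multiple of $\kappa$ shifts all $\Theta$-values by amounts proportional to $\kappa$, preserving both the HN filtration via $\mu$ and the location of the argmax up to the normalization).

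\textbf{Main obstacle.} The crux is Step 1 --- establishing that the scss subrepresentation is contained in \emph{every} discrepancy witness, not just that it is \emph{some} witness. The submodularity of $\Theta$ and $\kappa$ gives the right inequality, but translating "$N \cap M'$ has slope $\ge \mu(N)$" into "$N \cap M' = N$" uses the \emph{maximality} (not just maximal slope) in the definition of the scss subrepresentation, and I need to handle the edge case where $N \cap M' = 0$: there the slope is undefined, but then $\Theta(N \cap M') = 0$ while $\Theta(N) = \mu(M)\kappa(N) > \mu(M)\kappa(M) = 0$ when $N$ strictly destabilizes, giving a contradiction directly, so that case is in fact automatic. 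The remaining work is bookkeeping with the additive functionals, which I would organize around the single functional $L \mapsto \Theta(L) - \mu(M)\kappa(L)$ to keep the slope comparisons linear.
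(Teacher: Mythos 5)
Your proposal is correct, and Step 2 follows essentially the paper's route (the identity $\disc(M/M_1,\Theta)=\disc(M,\Theta)-\Theta(M_1)$ is exactly \Cref{dsicrepancy_preserved_quotient_by_M_1}, and your induction on the filtration length is equivalent to the paper's induction in \Cref{HN_contains_witness_discrepancy}), but your Step 1 proves \Cref{discrep_contains_1st_HN_term} by a genuinely different and arguably cleaner argument. The paper assumes $N=M_1\not\subseteq M'$, passes to the \emph{sum} $M_1+M'$, and applies the see-saw \Cref{scissor} twice to the exact sequences $0\to M'\to M_1+M'\to M_1/(M_1\cap M')\to 0$ and $0\to M_1\cap M'\to M_1\to M_1/(M_1\cap M')\to 0$ to force $\mu(M')>\mu(M_1)$. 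You instead work with the \emph{intersection}: modularity of dimension vectors gives $\Theta(N\cap M')\ge \Theta(N)>0$ because $\Theta(N+M')\le\Theta(M')$ by optimality of $M'$, and then $\mu(N\cap M')\ge \Theta(N)/\kappa(N\cap M')>\Theta(N)/\kappa(N)=\mu(N)$ whenever $N\cap M'\subsetneq N$, contradicting maximality of the slope of $N$. Your route avoids the see-saw lemma entirely and makes the role of $\Theta(N)>0$ transparent; the paper's route avoids modularity. Both are valid.

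Two small corrections. First, in your ``main obstacle'' paragraph you say the conclusion needs the strong maximality of the scss subrepresentation rather than just maximal slope; in fact strong maximality would not close the gap from a non-strict inequality (a proper subrepresentation of $N$ can have slope equal to $\mu(N)$), whereas the \emph{strict} inequality above, which you do have since $\Theta(N)>0$ and $\kappa$ drops strictly on proper subrepresentations, finishes the argument using maximal slope alone. Second, your alternative normalization fix of replacing $\Theta$ by $\Theta-\Theta(M/M_1)\kappa/\kappa(M/M_1)$ at each stage does \emph{not} preserve the location of the argmax, since adding $c\,\kappa(L)$ shifts $\Theta(L)$ by an amount depending on $L$; you should use your first fix and simply run Step 2 for the generalized discrepancy $\max_{\bar L\subseteq M/M_1}\Theta(\bar L)$, which is how the paper's \Cref{HN_contains_witness_discrepancy} is in fact stated (no hypothesis $\Theta=0$ on the ambient object, only the existence of the transition index $l$).
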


It is noteworthy that the discrepancy depends only on $\Theta$, but the Harder-Narasimhan filtration depends on $\mu$, which has an extra piece $\kappa$. While the Harder-Narasimhan filtration of $M$ may change due to different choices of $\kappa$, the results of \Cref{HN_contains_Witness_to_disc_intro} are independent of $\kappa$ for a fixed $\Theta$. 

\begin{thmx}[\Cref{thmB_part_I}, \Cref{one_PS_via_Kempf's_filtration}]\label{Effective_alg_intro}
Let $M$ be a representation of $Q$ over an infinite field. There exists a deterministic algorithm to compute the Harder-Narasimhan filtration of $M$. When the ground field is algebraically closed and when $M$ is unstable, a maximally destabilizing one parameter subgroup for $M$ can be constructed based on its Harder-Narasimhan filtration. Hence the algorithm also produces a maximally destabilizing one parameter subgroup for $M$. Moreover, in the case that $Q$ is bipartite where all arrows go from one partite to the other, the algorithm has polynomial time complexity.
\end{thmx}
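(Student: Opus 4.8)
The plan is to carry out two reductions. \emph{First}, I would reduce the computation of the whole Harder--Narasimhan filtration to the repeated computation of a single scss subrepresentation: by its inductive definition, $M_{1}$ is the scss subrepresentation of $M$, $M_{i+1}/M_{i}$ is the scss subrepresentation of $M/M_{i}$, and the process stops once the quotient is $\mu$-semistable; since $M_{1}\neq 0$ whenever $M\neq 0$ we have $\pmb{\dim}(M/M_{1})<\pmb{\dim}M$, so the process finishes after a number of rounds bounded by the total dimension of $M$. All the ambient linear algebra here (kernels, images, subrepresentations, quotient representations) is effective over any field; the hypothesis that $k$ is infinite enters only through \Cref{discrepancy_rep_algorithm}. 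So it suffices to compute the scss subrepresentation of an arbitrary representation $M$ with respect to a slope $\mu=\Theta/\kappa$.

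\emph{Second}, I would reduce the computation of one scss subrepresentation to repeated calls of \Cref{discrepancy_rep_algorithm}. First normalize: replace $\Theta$ by $\Theta^{\circ}:=\kappa(M)\,\Theta-\Theta(M)\,\kappa$, an integral weight with $\Theta^{\circ}(M)=0$, of bit size polynomial in those of $\Theta,\kappa,\pmb{\dim}M$, and whose associated slope $\Theta^{\circ}/\kappa=\kappa(M)\,\mu-\Theta(M)$ is an increasing affine reparametrization of $\mu$ (because $\kappa(M)>0$) and hence induces the same Harder--Narasimhan filtration and the same scss subrepresentations. Then run \Cref{discrepancy_rep_algorithm} on $(M,\Theta^{\circ})$. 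If $\disc(M,\Theta^{\circ})=0$ then $M$ is $\mu$-semistable and is its own scss subrepresentation. Otherwise the algorithm returns a witness $M'$ with $\Theta^{\circ}(M')=\disc(M,\Theta^{\circ})>0=\Theta^{\circ}(M)$, so $0\subsetneq M'\subsetneq M$. By \Cref{HN_contains_Witness_to_disc_intro}, $M'$ contains the scss subrepresentation $M_{1}$ of $M$. Since $M_{1}\subseteq M'$ and $M_{1}$ already attains the maximal value of $\mu$ over the nonzero subrepresentations of $M$, that value is also attained over the subrepresentations of $M'$, and $M_{1}$ — being the maximal subrepresentation of $M$ with that slope — is a fortiori the maximal subrepresentation of $M'$ with that slope, i.e. the scss subrepresentation of $M'$. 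Hence I recurse on $M'$; since $\pmb{\dim}M'<\pmb{\dim}M$ the recursion terminates within a number of steps bounded by the total dimension of $M$ and outputs $M_{1}$.

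Chaining the two reductions yields the deterministic algorithm \Cref{HN_alg_psuedo_code}, and \Cref{thmB_part_I} then follows: each call to \Cref{discrepancy_rep_algorithm} runs in time polynomial in $\pmb{\dim}M$, the weights, and the number of paths of $Q$; the two nested loops add only polynomially many such calls; and one checks that the dimension vectors and the normalized weights fed to these calls stay of polynomially bounded size throughout. In the bipartite case with all arrows directed from one part into the other, distinct paths are exactly distinct arrows, so the bound becomes polynomial in the number of edges. Finally, when $k$ is algebraically closed and $M$ is unstable, I would feed the computed Harder--Narasimhan filtration into the explicit passage \Cref{from_HN_to_OPS} from a filtration to a one parameter subgroup (weighting the $i$-th graded piece by an integer proportional to $\mu(M_{i}/M_{i-1})$, relative to a basis of the representation spaces adapted to the filtration); by the identification of the Harder--Narasimhan filtration with the Kempf filtration established in \cite{MR3199484}, the resulting one parameter subgroup is a maximally destabilizing one in Kempf's sense, which is \Cref{one_PS_via_Kempf's_filtration}.

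The main obstacle is the second reduction, and precisely the point that \emph{every} discrepancy witness returned by \Cref{discrepancy_rep_algorithm} shares its scss subrepresentation with $M$ — this is exactly where \Cref{HN_contains_Witness_to_disc_intro} does the real work — together with the bookkeeping needed to guarantee that both recursions are genuinely finite and that none of the intermediate data (dimension vectors, normalized weights) blows up, which is what keeps the whole procedure polynomial after the chaining. The passage to one parameter subgroups is then essentially a citation of \cite{MR3199484} combined with unwinding \Cref{from_HN_to_OPS}.
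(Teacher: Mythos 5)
Your proposal is correct and follows essentially the same route as the paper: the outer reduction to repeated scss computations, the inner reduction to repeated calls of \Cref{discrepancy_rep_algorithm} with the renormalized weight $\theta_{\mathbf{d}}=\kappa(\mathbf{d})\Theta-\Theta(\mathbf{d})\kappa$, the containment of the scss in every discrepancy witness (\Cref{discrep_contains_1st_HN_term}) together with the observation that the scss is inherited by any subrepresentation containing it (this is exactly \Cref{Algorithm_produces_M1}, which the paper proves by induction from the semistable end of the chain rather than top-down as you do), the same polynomial bookkeeping for $\sum_v|\theta_{\mathbf{d}^i}(v)|$, and the same appeal to \cite{MR3199484} via \Cref{one_PS_via_Kempf's_filtration} for the maximally destabilizing one parameter subgroup. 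No gaps worth flagging.
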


\Cref{Effective_alg_intro} is inviting for several purposes. First, finding Kempf's maximally destabilizing one parameter subgroups in general is very hard. Roughly speaking, suppose $V$ is a representation of a reductive group $G$. If $G$ is a torus, then finding Kempf's one parameter subgroup for an unstable point $v\in V$ is done by linear programming, whose constraints come from the states of $v$. For a general reductive group, one can first restrict the action to a maximal torus, then conduct linear programming on the entire orbit $G\cdot v$. The set of states of all points in the orbit can be as large as the power set of $\{1,2,\ldots,\dim V\}$, minus the empty subset. Hence the time complexity to compute Kempf's one parameter subgroups using brute force is exponential. \Cref{Effective_alg_intro} provides an alternative to brute force for finding Kempf's one parameter subgroups in the context of representations of acyclic quivers. In addition, if the quiver is bipartite as stated in \Cref{Effective_alg_intro}, the algorithm has polynomial time complexity. 

Second, \cite{MR3261979} shows that the stratification of the space of representations of a quiver of a fixed dimension by Harder-Narasimhan types coincides with the stratification by Kempf's one parameter subgroups. Hence  \Cref{Effective_alg_intro} sheds new light on computing the stratification by either type. 

\subsection{Outline of the paper}
We briefly recall slope stability (\Cref{def_slope_stab}), weight stability (\Cref{weight_stability_def}), and a way to go between the two (\Cref{connecting_slope_weight_stability}) in \Cref{stability_conditions_for_representations_of_quivers}. In \Cref{the_main_results_main_body}, we establish \Cref{HN_contains_Witness_to_disc_intro} (\Cref{discrep_contains_1st_HN_term}, \Cref{HN_contains_witness_discrepancy}, \Cref{HN_contains_witness_discrepancy_corollary}) and the first part of \Cref{Effective_alg_intro} (\Cref{thmB_part_I}). Specifically the first part of \Cref{Effective_alg_intro} constructs \Cref{HN_alg_psuedo_code}, and calculates its complexity. The second part of \Cref{Effective_alg_intro} constructs maximally destabilizing one parameter subgroups for unstable representations from their Harder-Narasimhan filtrations. Due to the technicality involved in invariant theory, we postpone the second part of \Cref{Effective_alg_intro} to \Cref{from_HN_to_OPS}. 

In \Cref{affine_GIT}, we recall the Hilbert-Mumford criterion (\Cref{HilbMumAffine}), and define maximally destabilizing one parameter subgroups (\Cref{worst_one_PS_def}). We also present Kempf's main result, \Cref{theorem6} from the paper \cite{MR506989}. \Cref{theorem6} describes the existence and the uniqueness of maximally destabilizing one parameter subgroups. We then apply the machinery to the case of representation of quivers at \Cref{the_quiver_setting}. Finally, we are able to finish the second part of \Cref{Effective_alg_intro} at \Cref{one_PS_via_Kempf's_filtration}.

In \Cref{a_short_example}, we construct an unstable representation to showcase that not all subrepresentations that witness the discrepancy occur in the Harder-Narasimhan filtration. We also provide two approaches to compute its maximally destabilizing one parameter subgroups. The first one uses \Cref{Effective_alg_intro} and the second one is the brute force procedure that can be applied to any representation of any reductive group. Although the second approach works in more generality, the cost will be apparent in the example. The point is to give the readers an idea of the hardness of finding maximally destabilizing one parameter subgroups and the value of \Cref{Effective_alg_intro}.

We note that our \Cref{HN_alg_psuedo_code} relies on \Cref{discrepancy_rep_algorithm}, which is an algorithm that computes  discrepancies and was established in \cite{https://doi.org/10.48550/arxiv.2111.00039}. \Cref{discrepancy_rep_algorithm} in turn is a consequence of \Cref{thm_1.5_ISQ18} from \cite{MR3868734}. We supply an almost self-contained account of how \Cref{discrepancy_rep_algorithm} is established based on \Cref{thm_1.5_ISQ18} at \Cref{appendixB}. Although this was the work of \cite{https://doi.org/10.48550/arxiv.2111.00039}, we intend to fill in some details not supplied in the original paper. Here is a visualization of the logical dependency of the results in our paper:
$$\begin{tikzpicture}
\node (a) at (-6,-1.4) [rectangle,draw] {\Cref{thm_1.5_ISQ18}};
\node (b) at (-3,-1.4) [rectangle,draw] {\Cref{discrepancy_rep_algorithm}};
\node (c) at (2.3,0) [rectangle,draw] {\Cref{Effective_alg_intro}};
\node (d) at (-3,1.4) [rectangle,draw] {\Cref{HN_contains_Witness_to_disc_intro}};
\path (-1.5,0) node (e) [ellipse, draw]{Combine};
\draw [->, double] (a) to (b);
\draw [->,double,bend left] (b) to (e);
\draw [->,double,bend right] (d) to (e);
\draw [->,double] (e) to (c);
\end{tikzpicture}.$$

Finally, we will work with an arbitrary ground field throughout \Cref{stability_conditions_for_representations_of_quivers} and \Cref{the_main_results_main_body} until \Cref{discrepancy_rep_algorithm}. Originally, the machinery of \Cref{discrepancy_rep_algorithm} only requires the ground field to have \emph{enough} elements. For simplicity we will work with an infinite field, starting from \Cref{discrepancy_rep_algorithm}. In \Cref{from_HN_to_OPS}, we will assume further that the ground field is algebraically closed for the formulation of Kempf's maximally destabilizing one parameter subgroups.
\subsection{Acknowledgements}
The author would like to thank Calin Chindris for his support along the preparation of this paper. The discussion with him and questions raised by him offered many insights that fostered the results in the paper. The author also gives thanks to Alfonso Zamora, Alana Huszar, and G\'{a}bor Ivanyos, Youming Qiao for kindly answering the author's questions on certain results in the paper \cite{MR3199484}, \cite{https://doi.org/10.48550/arxiv.2111.00039}, and \cite{MR3868734} respectively. Finally, the author thanks a referee for clarifying the complexity of the algorithms presented.

\section{Stability conditions for representations of quivers}\label{stability_conditions_for_representations_of_quivers}
In this section we recall slope stability (\Cref{def_slope_stab}) and weight stability (\Cref{weight_stability_def}) for representations of quivers.

Weight stability defined in this section is originally the Hilbert-Mumford criterion (\Cref{HilbMumAffine}) formulated in \cite{MR1315461} in the context of representations of quivers. For the purpose of the exposition of this paper, we postpone a more detailed account of weight stability to \Cref{from_HN_to_OPS}. At the end of this section we recall in \Cref{connecting_slope_weight_stability} a way to translate between slope stability and weight stability.
\subsection{Set up}
Let $k$ be a field, not necessarily algebraically closed. A quiver $Q=(Q_0,Q_1,t,h)$ consists of $Q_0$ (vertices) and $Q_1$ (arrows) together with two maps $t,h:Q_1\rightarrow Q_0$. The image of an arrow under $t$ (resp. $h$) is the tail (resp. head) of the arrow. We represent $Q$ as a directed graph with vertices $Q_0$ and directed arrows $a:ta\rightarrow ha$ for $a\in Q_1$. A \emph{path} in $Q$ is a composition of arrows in $Q$. We give each vertex $v$ the trivial arrow $e_v:v\rightarrow v$. For any path $p:x\rightarrow v$ (resp. $q:v\rightarrow y$), we set $e_vp=p$ (resp. $qe_v=q$). We let $Q$ be a finite acyclic quiver throughout the paper. That is, both $Q_0$ and $Q_1$ are finite, and $Q$ does not have any oriented cycle other than the self-loops $\{e_v\}_{v\in Q_0}$. In this way, the number of paths in $Q$ is finite.

A representation $M$ of $Q$ consists of a collection of finite dimensional $k$-vector spaces $M_{v}$, for each $v\in Q_0$ together with a collection of $k$-linear maps $M(a):M_{ta}\rightarrow M_{ha}$, for each $a\in Q_1$. For each $v\in Q_0$, we set  $M(e_v):M_v\rightarrow M_v$ to be the identity map. 

If $M,N$ are two representations of $Q$, then a morphism from $M$ to $N$ is a collection of $k$-linear maps $\varphi_v:M_v\rightarrow N_v$ for each $v\in Q_0$, such that $N(a)\varphi_{ta}=\varphi_{ha}M(a)$ for each $a\in Q_1$. In particular, $M'$ is a subrepresentation of $M$ whenever each $M'_v$ is a subspace of $M_v$ and each $M'(a)$ is the restriction of $M(a)$ to $M'_{ta}$. We will write $M'\subseteq M$ if $M'$ is a subrepresentation of $M$. We also note that the collection of representations of $Q$ forms an abelian category.

The dimension vector of a representation $M$ of $Q$ is the tuple $\pmb{\dim} M\in \mathbf{Z}^{Q_0}$ where $(\pmb{\dim} M)_v = \dim M_v$ as a $k$-vector space for each $v\in Q_0$. A \emph{weight} is a $\mathbf{Z}$-linear function $\mathbf{Z}^{Q_0}\rightarrow\mathbf{Z}$. If $\theta$ is a weight, we write $\theta(\pmb{\dim} M)$ as $\theta(M)$.

For clarity, below is a summary of some important notations that we stick to throughout the paper:
\begin{notation}
Let $M$ be a representation of $Q$. For each vertex $v\in Q_0$, we write $M_v$ as the vector space of $M$ at $v$. For each $a\in Q_1$, $M(a)$ denotes the map $M_{ta}\rightarrow M_{ha}$ that is part of the data of $M$. For any weight  $\theta:\mathbf{Z}^{Q_0}\rightarrow\mathbf{Z}$, $\theta(v)$ instead of $\theta_v$ is the weight of $\theta$ at $v$. We also decree that $\theta(M)$ means $\sum_{v\in Q_0}\theta(v)\dim M_v$. For a dimension vector $\mathbf{d}\in \mathbf{Z}^{Q_0}$, we write $\mathbf{d}_v$ instead of $\mathbf{d}(v)$ for its component at $v$. 
\end{notation}

\subsection{Slope stability}
In this section we recall semistability of representations of quivers with respect to a slope, along with some important properties stated in \Cref{scissor} and \Cref{scss_lem}. We then recall the construction from \cite{MR1906875} of the Harder-Narasimhan filtration of a representation at \Cref{HN}. These results will be widely used at \Cref{the_main_results_main_body}. 

Fix two weights $\Theta,\kappa:\mathbf{Z}^{Q_0}\rightarrow\mathbf{Z}$ where we require $\kappa((\mathbf{Z}^{+})^{Q_0})> 0$.  We set $\mu(M)=\Theta(M)/\kappa(M)$ for any nonzero representation $M$. We implicitly assume that a representation is nonzero whenever the slope function is applied to it. 
\begin{definition}\label{def_slope_stab}
We say a representation $M$ of $Q$ is $\mu$-\emph{semistable} if $\mu(N)\leq \mu(M)$ for every subrepresentation $N$ of $M$. 
\end{definition}

\begin{lemma}[Lemma 2.1 \cite{MR1906875} ]\label{scissor}
Let $0\rightarrow L\rightarrow M\rightarrow N\rightarrow 0$ be a short exact sequence of representations of $Q$. Then the following conditions are equivalent:
\begin{enumerate}
    \item $\mu(L)\leq \mu(M)$,
    \item $\mu(L)\leq\mu(N)$,
    \item $\mu(M)\leq\mu(N)$.
\end{enumerate}
\end{lemma}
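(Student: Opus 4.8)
The plan is to reduce the lemma to the elementary ``mediant inequality'' for fractions with positive denominators. The only input I need is that both $\Theta$ and $\kappa$, being $\mathbf{Z}$-linear functions of the dimension vector, are additive on short exact sequences: since $0\to L\to M\to N\to 0$ restricts to a short exact sequence of $k$-vector spaces at each vertex, $\pmb{\dim}M=\pmb{\dim}L+\pmb{\dim}N$, and hence $\Theta(M)=\Theta(L)+\Theta(N)$ and $\kappa(M)=\kappa(L)+\kappa(N)$. Because the slope is only ever applied to nonzero representations, I may assume $L$ and $N$ are nonzero; the standing hypothesis on $\kappa$ then gives $\kappa(L)>0$ and $\kappa(N)>0$, so also $\kappa(M)=\kappa(L)+\kappa(N)>0$.

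Next I would set $a=\Theta(L)$, $b=\kappa(L)$, $c=\Theta(N)$, $d=\kappa(N)$, so that $b,d>0$, $\mu(L)=a/b$, $\mu(N)=c/d$, and, by the additivity above, $\mu(M)=(a+c)/(b+d)$ with $b+d>0$. Clearing the (positive) denominators in each of the three conditions collapses all of them to the single inequality $ad\le bc$: condition (1), $a/b\le (a+c)/(b+d)$, becomes $a(b+d)\le b(a+c)$, which is $ad\le bc$; condition (2) is $ad\le bc$ outright; and condition (3), $(a+c)/(b+d)\le c/d$, becomes $d(a+c)\le c(b+d)$, which is again $ad\le bc$. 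Since each of the three conditions is equivalent to $ad\le bc$, they are equivalent to one another, which is the claim.

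There is no genuine obstacle here — the content is purely the position of the mediant $(a+c)/(b+d)$ relative to $a/b$ and $c/d$ — so the only point to watch is the positivity of every denominator that appears, which is exactly where the hypothesis $\kappa((\mathbf{Z}^{+})^{Q_0})>0$ together with the convention that slopes are taken only of nonzero representations gets used: without it one could not clear denominators while preserving the direction of the inequalities. Equivalently, one could package the whole argument into the sign of the determinant $\Theta(L)\kappa(N)-\Theta(N)\kappa(L)=bc-ad$, but the three direct computations above are the most transparent route.
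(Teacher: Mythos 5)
Your proof is correct: the paper states this lemma without proof, citing Lemma 2.1 of \cite{MR1906875}, and your mediant-inequality argument (reducing all three conditions to $\Theta(L)\kappa(N)\leq\Theta(N)\kappa(L)$ via additivity of $\Theta$ and $\kappa$ on short exact sequences and positivity of $\kappa$ on nonzero representations) is exactly the standard argument given there. No gaps; the only point requiring care --- positivity of the denominators, guaranteed by $\kappa((\mathbf{Z}^{+})^{Q_0})>0$ and the convention that slopes are taken only of nonzero representations --- is handled explicitly.
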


A fundamental result is the following: 
\begin{lemma}[Lemma 2.2 \cite{MR1906875}]\label{scss_lem}
Let $M$ be a representation of $Q$. There is a unique subrepresentation $N$ such that 
\begin{enumerate}
    \item $\mu(N)$ is maximal among subrepresentations of $M$, and 
    \item if $\mu(N')=\mu(N)$, then $N'\subseteq N$.
\end{enumerate}
\end{lemma}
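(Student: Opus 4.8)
The plan is to establish existence of a maximal-slope subrepresentation first, then upgrade to the unique maximal one. For existence, I would note that every subrepresentation $N' \subseteq M$ has $\pmb{\dim} N' \le \pmb{\dim} M$ componentwise, so there are only finitely many possible dimension vectors among subrepresentations of $M$, hence only finitely many values of $\mu(N') = \Theta(N')/\kappa(N')$ (here $\kappa(N') > 0$ whenever $N' \neq 0$, so $\mu$ is well defined). Assuming $M \neq 0$, this finite set of values is nonempty, so $\mu_{\max} := \max_{0 \neq N' \subseteq M} \mu(N')$ exists and is attained.

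The crux is to show that the set $\mathcal{S} := \{\, 0 \neq N' \subseteq M : \mu(N') = \mu_{\max} \,\}$ is closed under sums. Given $N_1, N_2 \in \mathcal{S}$, I would work with the short exact sequence of representations
$$0 \longrightarrow N_1 \cap N_2 \longrightarrow N_1 \oplus N_2 \longrightarrow N_1 + N_2 \longrightarrow 0,$$
where the second map is $(x,y) \mapsto x + y$. Since $\Theta$ and $\kappa$ are additive on dimension vectors, $\Theta(N_1 \oplus N_2) = \Theta(N_1) + \Theta(N_2)$ and $\kappa(N_1 \oplus N_2) = \kappa(N_1) + \kappa(N_2)$, so $\mu(N_1 \oplus N_2)$ is the mediant of $\mu(N_1) = \mu(N_2) = \mu_{\max}$ and hence equals $\mu_{\max}$. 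If $N_1 \cap N_2 = 0$, then $N_1 + N_2 \cong N_1 \oplus N_2$, giving $\mu(N_1 + N_2) = \mu_{\max}$ directly. If $N_1 \cap N_2 \neq 0$, then $\mu(N_1 \cap N_2) \le \mu_{\max} = \mu(N_1 \oplus N_2)$ by maximality, so the equivalence $(1)\Leftrightarrow(3)$ of \Cref{scissor} applied to the displayed sequence yields $\mu_{\max} = \mu(N_1 \oplus N_2) \le \mu(N_1 + N_2)$; combined with $\mu(N_1 + N_2) \le \mu_{\max}$ (maximality again, as $N_1 + N_2 \subseteq M$), this forces $\mu(N_1 + N_2) = \mu_{\max}$, i.e. $N_1 + N_2 \in \mathcal{S}$.

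With this closure in hand, finite-dimensionality of $M$ ensures that $\mathcal{S}$ has a (unique) largest element with respect to inclusion, namely $N := \sum_{N' \in \mathcal{S}} N'$, which itself lies in $\mathcal{S}$. Property (1) holds by construction, and (2) is immediate: if $\mu(N') = \mu(N) = \mu_{\max}$ then $N' \in \mathcal{S}$, so $N' \subseteq N$. Uniqueness of $N$ follows formally from (2), since any subrepresentation satisfying both (1) and (2) contains $N$ and is contained in $N$.

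The main obstacle is the middle step: one must resist treating $N_1 \oplus N_2$ as a subrepresentation of $M$ (it is not) and instead invoke \Cref{scissor} purely formally, in conjunction with the additivity of the weights (the mediant observation); one also has to dispatch separately the degenerate case $N_1 \cap N_2 = 0$, where the slope of the kernel term is undefined.
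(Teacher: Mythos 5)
Your argument is correct: the finiteness of possible dimension vectors gives existence of the maximal slope, the short exact sequence $0 \to N_1 \cap N_2 \to N_1 \oplus N_2 \to N_1 + N_2 \to 0$ together with the mediant observation and \Cref{scissor} gives closure of the maximal-slope subrepresentations under sums (with the degenerate case $N_1\cap N_2=0$ correctly split off), and uniqueness then follows formally. The paper itself offers no proof of this lemma --- it is imported verbatim as Lemma 2.2 of \cite{MR1906875} --- and your proof is essentially the standard argument given in that reference, so there is nothing to reconcile.
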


We call such a representation $N$ the \emph{strongly contradicting semistability }(abbreviated as scss) \emph{subrepresentation of }$M$.

\begin{theorem}[Theorem 2.5 \cite{MR1906875}]\label{HN}
Let $M$ be a representation of $Q$. There is a unique filtration $0=M_0\subsetneq M_1\subsetneq \cdots\subsetneq M_{r-1}\subsetneq M_r = M$ such that 
\begin{enumerate}
\item $\mu(M_i/M_{i-1})>\mu(M_{i+1}/M_{i})$ for $i=1,\ldots, r-1$,
\item the representation $M_i/M_{i-1}$ is $\mu$-semistable for $i=1,\ldots,r$.
\end{enumerate}
\end{theorem}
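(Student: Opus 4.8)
The plan is to construct the filtration recursively from the scss subrepresentation supplied by \Cref{scss_lem}, then to verify the two defining properties and uniqueness, with everything organized by an induction on the total dimension $\dim M=\sum_{v\in Q_0}\dim M_v$; the base case is when $M$ is already $\mu$-semistable, where the one-step filtration $0=M_0\subsetneq M_1=M$ does the job.

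For existence I would set $M_1$ to be the scss subrepresentation of $M$. If $M_1\ne M$, then $M/M_1$ has strictly smaller total dimension, so by induction it carries a filtration $0=\overline{M}_1\subsetneq\overline{M}_2\subsetneq\cdots\subsetneq\overline{M}_r=M/M_1$ with the analogous properties; pulling it back along $M\twoheadrightarrow M/M_1$ produces $M_1\subsetneq M_2\subsetneq\cdots\subsetneq M_r=M$ with $M_i/M_{i-1}\cong\overline{M}_i/\overline{M}_{i-1}$ for $i\ge2$, so clause (2) and the chain of inequalities in clause (1) are inherited for those indices. Clause (2) in degree $1$ is immediate from part (1) of \Cref{scss_lem}: every subrepresentation of the scss $M_1$ has slope at most $\mu(M_1)$, so $M_1$ is $\mu$-semistable. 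The one new thing to check is $\mu(M_1/M_0)>\mu(M_2/M_1)$ at the bottom. Here $M_2/M_1=\overline{M}_2$ is the scss of $M/M_1$, and maximality of $\mu(M_1)$ among all subrepresentations of $M$ forces $\mu(M_2)\le\mu(M_1)$ — equality would, by part (2) of \Cref{scss_lem}, force $M_2\subseteq M_1$, which is false — so $\mu(M_2)<\mu(M_1)$, and applying \Cref{scissor} in its contrapositive form to $0\to M_1\to M_2\to M_2/M_1\to0$ yields $\mu(M_1)>\mu(M_2/M_1)$, as needed.

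For uniqueness I would take an arbitrary filtration $0=M_0\subsetneq M_1\subsetneq\cdots\subsetneq M_r=M$ satisfying (1) and (2), let $N$ be the scss subrepresentation of $M$, and show $M_1=N$; the general statement then follows by passing to $M/M_1$ (whose induced filtration again satisfies (1) and (2)) and inducting on the total dimension. The main tool will be the observation that there is no nonzero morphism $A\to B$ between $\mu$-semistable representations with $\mu(A)>\mu(B)$: the image would be a nonzero quotient of $A$, hence of slope $\ge\mu(A)$ by \Cref{scissor}, while also a subrepresentation of $B$, hence of slope $\le\mu(B)$ — impossible. Using (1), the semistable subquotients $M_{i+1}/M_i$ of the filtration induced on $M/M_1$ all have slope $\le\mu(M_2/M_1)<\mu(M_1)\le\mu(N)$, the last inequality being maximality of $\mu(N)$; so if the composite $N\hookrightarrow M\twoheadrightarrow M/M_1$ were nonzero, taking the smallest step $M_j/M_1$ of that induced filtration into which $N$ maps and projecting onto $M_j/M_{j-1}$ would produce a nonzero map from semistable $N$ to semistable $M_j/M_{j-1}$ with $\mu(N)>\mu(M_j/M_{j-1})$, a contradiction. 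Hence $N\subseteq M_1$; then $\mu(N)\le\mu(M_1)$ by semistability of $M_1$, which combined with $\mu(N)\ge\mu(M_1)$ gives $\mu(N)=\mu(M_1)$, and part (2) of \Cref{scss_lem} yields $M_1\subseteq N$, so $M_1=N$.

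I expect the uniqueness step to be the only genuine obstacle — specifically isolating the vanishing statement for morphisms between semistable representations of strictly decreasing slope and extracting the contradiction by walking up the induced filtration on $M/M_1$. The remaining ingredients are routine manipulations with \Cref{scissor} and \Cref{scss_lem} together with the dimension induction, which terminates because each recursive step strictly decreases $\dim M$.
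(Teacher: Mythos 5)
Your proposal is correct. The paper itself does not prove \Cref{HN} --- it is quoted from \cite{MR1906875}, and the paper only records the construction (take $M_1$ to be the scss subrepresentation, then iterate on $M/M_1$), which is exactly your existence argument; your verification that $\mu(M_1)>\mu(M_2/M_1)$ via \Cref{scss_lem}(2) and the contrapositive of \Cref{scissor} is the right way to fill in that step. Your uniqueness argument --- the Hom-vanishing statement for semistable representations of strictly decreasing slope, applied by walking up the induced filtration on $M/M_1$ to force $N\subseteq M_1$ and then squeezing slopes to get $M_1=N$ --- is the standard proof from the cited reference and is sound, including the degenerate case $r=1$.
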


The above filtration is called the \emph{Harder-Narasimhan filtration of} $M$. It is constructed by setting $M_1$ to be the scss subrepresentation of $M$, then by inductively setting $M_{i+1}/M_{i}$ as the scss subrepresentation of $M/M_{i}$ until $M/M_i$ is $\mu$-semistable.

\subsection{Weight stability}
Another stability condition on representations of quivers comes from GIT and was introduced in \cite{MR1315461}. Like the slope stability we introduced earlier, GIT stability also depends on a choice of parameters. In the case of representations of quivers the parameter is a single weight.  We therefore refer to this stability condition as weight stability. We will relate weight and slope stability in \Cref{connecting_slope_weight_stability} at the end of this section. 

\begin{definition}\label{weight_stability_def}
Let $\mathbf{d}\in\mathbf{Z}^{Q_0}$ be a dimension vector and let $\theta$ be a weight with $\theta(\mathbf{d})=0$. A representation $M$ with dimension vector $\mathbf{d}$ is $\theta$-\emph{semistable} if $\theta(N)\leq 0$ for all subrepresentations $N\subseteq M$. 
\end{definition}

The notion of \emph{discrepancy} was introduced in \cite{MR4238986} to describe the subrepresentations that contradict the weight stability the most. This very notion allows us to compute the  Harder-Narasimhan filtrations of representations of an acyclic quiver using tools from algebraic complexity, as we shall see in \Cref{the_main_results_main_body}.

\begin{definition}\label{discrepancy}
Let $M$ be a representation and let $\theta$ be a weight. We do not require $\theta(M)=0$. 
The \emph{discrepancy of }$M$ \emph{with respect to} $\theta$ is defined as 
$$\disc(M,\theta)=\max_{N\subseteq M}\theta(N).$$ We say a subrepresentation $N$ is $\theta$-\emph{optimal in }$M$, or $N$ \emph{witnesses} $\disc(M,\theta)$ if $\theta(N)=\disc(M,\theta)$.
\end{definition}

We recall the following way to go between weight stability and slope stability:
\begin{lemma}\label{connecting_slope_weight_stability}
Let $\Theta$ and $\kappa$ be two weights with $\kappa(N)> 0$ for every nonzero representation $N$. Let $\mu$ be the corresponding slope $\Theta/\kappa$. For every dimension vector $\mathbf{d}\in\mathbf{Z}^{Q_0}$, define the new weight $$\theta_{\mathbf{d}}(N): =\kappa(\mathbf{d})\Theta(N)-\Theta(\mathbf{d})\kappa(N)\text{ for each }N.$$ Then a representation $M$ of dimension $\mathbf{d}$ is $\mu$-semistable if and only if $M$ is $\theta_{\mathbf{d}}$-semistable. Moreover, the new slope $\mu_{\mathbf{d}}=\theta_{\mathbf{d}}/\kappa$ defines the same slope stability condition so that the Harder-Narasimhan filtrations of a representation with respect to $\mu$ and $\mu_{\mathbf{d}}$ coincide.
\end{lemma}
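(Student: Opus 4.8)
The plan is to reduce the whole statement to the single algebraic identity
$$\theta_{\mathbf{d}}(N) = \kappa(\mathbf{d})\,\Theta(N) - \Theta(\mathbf{d})\,\kappa(N),$$
together with the hypothesis that $\kappa$ takes strictly positive values on every nonzero representation.

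First I would check that $\theta_{\mathbf{d}}$ is an admissible weight for weight stability in dimension $\mathbf{d}$, i.e.\ that $\theta_{\mathbf{d}}(\mathbf{d}) = 0$; this is immediate from the definition. For the first equivalence, let $M$ be a representation of dimension $\mathbf{d}$ and let $0 \neq N \subseteq M$. Since $\kappa(N) > 0$ and $\kappa(\mathbf{d}) = \kappa(M) > 0$, dividing the identity by the positive number $\kappa(N)\kappa(\mathbf{d})$ shows that $\theta_{\mathbf{d}}(N) \le 0$ is equivalent to $\Theta(N)/\kappa(N) \le \Theta(\mathbf{d})/\kappa(\mathbf{d})$, that is, to $\mu(N) \le \mu(M)$. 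The zero subrepresentation contributes only $\theta_{\mathbf{d}}(0) = 0 \le 0$ and is excluded from the slope condition, so it is harmless. Quantifying over all $N \subseteq M$ and comparing \Cref{weight_stability_def} with \Cref{def_slope_stab} then yields that $M$ is $\theta_{\mathbf{d}}$-semistable if and only if $M$ is $\mu$-semistable.

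For the second assertion I would compute, for an arbitrary nonzero representation $N$ of any dimension,
$$\mu_{\mathbf{d}}(N) = \frac{\theta_{\mathbf{d}}(N)}{\kappa(N)} = \kappa(\mathbf{d})\,\mu(N) - \Theta(\mathbf{d}),$$
so that $\mu_{\mathbf{d}}$ is obtained from $\mu$ by the affine substitution $t \mapsto \kappa(\mathbf{d})\,t - \Theta(\mathbf{d})$ whose leading coefficient $\kappa(\mathbf{d})$ is strictly positive (again by the hypothesis on $\kappa$, since $\mathbf{d}$ is the dimension vector of a nonzero representation). This reparametrization is strictly increasing, hence preserves every order comparison between slopes of subrepresentations: maximality of the slope, the containment property of \Cref{scss_lem}, and both numerical conditions of \Cref{HN} (strict decrease of consecutive slopes and semistability of the subquotients) are verbatim the same statements for $\mu$ and for $\mu_{\mathbf{d}}$. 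Consequently the scss subrepresentation is the same at every stage of the inductive construction of \Cref{HN}, and by the uniqueness in \Cref{HN} the two Harder-Narasimhan filtrations coincide; note also that $\mu_{\mathbf{d}} = \theta_{\mathbf{d}}/\kappa$ has the same denominator $\kappa$, so it is a legitimate slope function.

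I do not expect a genuine obstacle here: this is a bookkeeping lemma, and the only points that need care are (i) tracking the sign when clearing the denominators $\kappa(N)$ and $\kappa(\mathbf{d})$ — which is exactly where positivity of $\kappa$ enters — and (ii) observing that the affine change of variable relating $\mu_{\mathbf{d}}$ to $\mu$ has positive slope, so that it is order-preserving rather than order-reversing.
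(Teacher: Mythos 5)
Your argument is correct and is essentially identical to the paper's own proof: both parts reduce to clearing the positive denominators $\kappa(N)\kappa(\mathbf{d})$ and to the identity $\mu_{\mathbf{d}}(N)=\kappa(\mathbf{d})\mu(N)-\Theta(\mathbf{d})$, which exhibits $\mu_{\mathbf{d}}$ as an order-preserving affine reparametrization of $\mu$. Your write-up is somewhat more explicit about why order preservation forces the two Harder--Narasimhan filtrations to coincide, but the route is the same.
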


\begin{proof}
Let $M$ be a representation of dimension $\mathbf{d}$ and let $N\subset M$ be any subrepresentation. We then have 
\begin{equation*}
\begin{split}
    \mu(N)=\frac{\Theta(N)}{\kappa(N)}\leq \mu(M)=\frac{\Theta(\mathbf{d})}{\kappa(\mathbf{d})}\Leftrightarrow \kappa(\mathbf{d})\Theta(N)-\Theta(\mathbf{d})\kappa(N)=\theta_{\mathbf{d}}(N)\leq 0
    \end{split}
\end{equation*}
The first statement follows.

For the second statement, simply note that $$\mu_{\mathbf{d}}(N) = \kappa(\mathbf{d})\cdot \mu(N)-\Theta(\mathbf{d}).$$ Hence $\mu_{\mathbf{d}}$ is a positive scalar multiple of $\mu$, followed by a translation.
\end{proof}

\section{The main results}\label{the_main_results_main_body}
Fix an acyclic quiver $Q$. In this section we prove two major results: \Cref{HN_contains_witness_discrepancy} and \Cref{thmB_part_I}. \Cref{HN_contains_witness_discrepancy} provides a sufficient condition for the Harder-Narasimhan filtration of a representation of $Q$ to contain a term that witnesses the discrepancy. We then propose  \Cref{HN_alg_psuedo_code} to compute the Harder-Narasimhan filtrations of representations of $Q$. The correctness of \Cref{HN_alg_psuedo_code} is ensured by \Cref{Algorithm_produces_M1}. We also recall the algebraic complexity machinery  \Cref{discrepancy_rep_algorithm} derived from \cite{MR3868734}.  With the aid of \Cref{discrepancy_rep_algorithm}, we establish in \Cref{thmB_part_I} the time complexity of \Cref{HN_alg_psuedo_code}. The derivation of \Cref{discrepancy_rep_algorithm} is supplied in \Cref{appendixB} for interested readers.

We fix two weights $\Theta,\kappa$ on $\mathbf{Z}^{Q_0}$ where $\kappa(N)>0$ for each non-zero representation $N.$ We let $\mu=\Theta/\kappa$ be the slope. We do not assume anything about the ground field until \Cref{discrepancy_rep_algorithm}, where we will begin to assume the ground field is infinite. 

We now present \Cref{discrep_contains_1st_HN_term}, which is the cornerstone of this paper. 

\begin{lemma}\label{discrep_contains_1st_HN_term}
Let $M$ be a $\mu$-unstable representation, and let $M_1$ be the scss subrepresentation of $M$. Suppose $\Theta(M_1)>0$ and $M^{\prime}$ is $\Theta$-optimal in $M$. Then $M^{\prime}$ contains $M_{1}$. Moreover, $M^{\prime}$ is $\mu$-semistable if and only if $M^{\prime}=M_1$.
\end{lemma}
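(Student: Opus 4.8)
The plan is to exploit the maximality and uniqueness properties of the scss subrepresentation $M_1$ (\Cref{scss_lem}) together with the comparison lemma \Cref{scissor}, applied to suitable short exact sequences. First I would show $M' \supseteq M_1$. Consider the subrepresentation $M' + M_1 \subseteq M$ and the exact sequence $0 \to M' \cap M_1 \to M_1 \to (M'+M_1)/M' \to 0$. Since $\mu(M_1)$ is the maximal slope among subrepresentations of $M$ and $M' \cap M_1$ is a subrepresentation of $M_1$, \Cref{scissor} applied to this sequence gives $\mu(M_1) \le \mu\big((M'+M_1)/M'\big)$, hence $\mu\big((M'+M_1)/M'\big)$ is maximal too (it cannot exceed $\mu(M_1)$ since that quotient is a subrepresentation of $M/M'$... more carefully, one compares slopes directly). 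The cleaner route: because $(M'+M_1)/M' \cong M_1/(M_1\cap M')$ has slope $\ge \mu(M_1)$, and $M_1$ is the \emph{largest} subrepresentation achieving the maximal slope, if $M_1 \cap M' \subsetneq M_1$ we would obtain a contradiction by showing $M' + M_1$ itself has slope $\ge \mu(M_1)$, forcing $M_1 \subseteq M' + M_1 = M_1$ by uniqueness — but this needs the $\Theta$-optimality of $M'$, not just slope considerations, to get started. So the actual first step is: use $\Theta(M_1) > 0$ and $\Theta$-optimality to deduce $\Theta(M' + M_1) \le \Theta(M')$, i.e. $\Theta\big((M'+M_1)/M'\big) \le 0$, equivalently (by modularity) $\Theta(M_1) \le \Theta(M_1 \cap M')$; combined with $M_1 \cap M' \subseteq M_1$ this is a statement about the $\Theta$-value. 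Then I would invoke $\mu$-semistability considerations on $M_1$ (which, being scss, is $\mu$-semistable of slope $\mu(M_1)$) to upgrade this to $M_1 \cap M' = M_1$.

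For the "moreover" clause: one direction is immediate — $M_1$ is $\mu$-semistable by construction (it is the first term of its own Harder-Narasimhan filtration, or directly: it is scss hence semistable). For the converse, suppose $M'$ is $\mu$-semistable and $M' \supsetneq M_1$. Then $\mu(M') \le \mu(M_1)$ since $M_1 \subseteq M'$ would... no: rather, since $M' \subseteq M$, we have $\mu(M') \le \mu(M_1)$ by maximality of $\mu(M_1)$ among subrepresentations of $M$. On the other hand $M_1 \subseteq M'$ and $M'$ is $\mu$-semistable forces $\mu(M_1) \le \mu(M')$. Hence $\mu(M') = \mu(M_1)$, and then uniqueness in \Cref{scss_lem}(2) gives $M' \subseteq M_1$, so $M' = M_1$. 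This part is short; the content is entirely in the first part.

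The main obstacle I anticipate is correctly combining the $\Theta$-optimality hypothesis with the slope/semistability structure. The discrepancy is measured by $\Theta$ alone (no $\kappa$), while $M_1$ is defined via $\mu = \Theta/\kappa$; the hypothesis $\Theta(M_1) > 0$ is precisely the bridge that makes adding $M_1$ to $M'$ non-decreasing for $\Theta$ on the relevant quotient, but one must be careful that "adding $M_1$" does not increase $\kappa$ in a way that breaks the argument — which is why the argument must be run at the level of $\Theta$-values and then promoted, rather than purely with slopes. Concretely, I expect to need the inequality $\Theta(M_1/(M_1\cap M')) \ge \Theta(M_1) > 0$, which should follow because $M_1/(M_1\cap M')$ embeds in $M/M'$ as... hmm, actually it follows from semistability of $M_1$: any quotient of the semistable $M_1$ has slope $\le \mu(M_1)$, but we want a lower bound on $\Theta$, so instead use that the sub $M_1 \cap M' \subseteq M_1$ satisfies $\mu(M_1\cap M') \le \mu(M_1)$ hence (clearing denominators, using $\kappa > 0$) a comparison of $\Theta$-values against $\kappa$-values — this is the one genuinely fiddly computation. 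Once that inequality is in hand, $\Theta(M'+M_1) = \Theta(M') + \Theta(M_1/(M_1\cap M')) \ge \Theta(M') + \Theta(M_1) > \Theta(M')$ would contradict $\Theta$-optimality of $M'$ unless $M_1/(M_1\cap M') = 0$, i.e. $M_1 \subseteq M'$. That is the skeleton; the delicate point is making the $\Theta$-comparison rigorous without accidentally assuming what is to be proved.
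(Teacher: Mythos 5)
The core of your middle paragraph is a correct proof and takes a slightly different route from the paper's. From $\Theta$-optimality of $M'$ you get $\Theta(M'+M_1)\le\Theta(M')$, hence $\Theta(M_1\cap M')\ge\Theta(M_1)>0$ (so in particular $M_1\cap M'\neq 0$); if $M_1\cap M'\subsetneq M_1$, then $\kappa(M_1\cap M')<\kappa(M_1)$ because $\kappa$ is strictly positive on nonzero representations, whence
$$\mu(M_1\cap M')=\frac{\Theta(M_1\cap M')}{\kappa(M_1\cap M')}\ge\frac{\Theta(M_1)}{\kappa(M_1\cap M')}>\frac{\Theta(M_1)}{\kappa(M_1)}=\mu(M_1),$$
contradicting the maximality in \Cref{scss_lem}. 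This bypasses \Cref{scissor} entirely and works with the intersection; the paper instead works with the sum, showing $\mu(M')>\mu(M_1+M')$ from optimality together with $\kappa(M')<\kappa(M_1+M')$, and then applies \Cref{scissor} twice to sandwich $\mu\big(M_1/(M_1\cap M')\big)$ and reach the same contradiction $\mu(M')>\mu(M_1)$. Your argument for the ``moreover'' clause coincides with the paper's and is correct.

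The problem is that your final paragraph abandons this route in favor of the target inequality $\Theta\big(M_1/(M_1\cap M')\big)\ge\Theta(M_1)$, which you flag as the unresolved ``genuinely fiddly computation.'' That inequality is equivalent to $\Theta(M_1\cap M')\le 0$ and is false in general: nothing prevents a nonzero proper subrepresentation of $M_1$ from having positive $\Theta$-value, and your own middle paragraph derives the opposite, $\Theta(M_1\cap M')\ge\Theta(M_1)>0$, from optimality. What your concluding contradiction actually needs, under the assumption $M_1\not\subseteq M'$, is only $\Theta\big(M_1/(M_1\cap M')\big)>0$, and that is available: either $M_1\cap M'=0$ and the quotient is $M_1$ itself, or $\mu(M_1\cap M')\le\mu(M_1)$ by scss-ness, so \Cref{scissor} gives $\mu\big(M_1/(M_1\cap M')\big)\ge\mu(M_1)>0$, and positivity of the slope is equivalent to positivity of $\Theta$ because $\kappa>0$. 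So the step you deferred closes in one line once you aim at the right inequality; as literally stated, your last paragraph would not close.
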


\begin{proof}
Note that $\Theta(M_1)>0$ implies $\disc(M,\Theta)=\Theta(M^{\prime})>0$. 
Suppose on the contrary that $M_{1}\not\subset M^{\prime}$. We then have a proper inclusion $M^{\prime}\subsetneq M_{1}+M^{\prime}$. In particular, $\kappa(M^{\prime})<\kappa(M_{1}+M^{\prime})$. On the other hand, $\Theta(M^{\prime})\geq \Theta(M_{1}+M^{\prime})$ as $M^{\prime}$ attains the discrepancy. If $\Theta(M_1+M^{\prime})\leq 0$, we automatically have $\mu( M^{\prime})>\mu(M_1+M^{\prime}).$ If $\Theta(M_1+M^{\prime})>0$, we have 
$$\frac{\Theta(M^{\prime})}{\kappa(M^{\prime})}\geq \frac{\Theta(M_1+M^{\prime})}{\kappa(M^{\prime})}>\frac{\Theta(M_1+M^{\prime})}{\kappa(M_1+M^{\prime})}.$$ In any case we deduce that $\mu(M^{\prime})>\mu(M_{1}+M^{\prime})$. By \Cref{scissor}, the following short exact sequence 
$$
0\rightarrow M^{\prime}\rightarrow M_{1}+M^{\prime}\rightarrow (M_{1}+M^{\prime})/M^{\prime}\simeq M_{1}/(M_{1}\cap M^{\prime})\rightarrow 0$$ implies $\mu(M^{\prime})>\mu(M_{1}+M^{\prime})>\mu\big(M_{1}/(M_{1}\cap M^{\prime})\big)$.

On the other hand, since $M_1$ is the scss subrepresentation of $M$, we have $\mu(M_{1}\cap M^{\prime})\leq \mu(M_1)$. Now \Cref{scissor} applied to the following short exact sequence 
$$0\rightarrow M_{1}\cap M^{\prime}\rightarrow M_{1}\rightarrow M_{1}/(M_{1}\cap M^{\prime})\rightarrow 0$$ implies 
$\mu\big(M_{1}/(M_{1}\cap M^{\prime})\big)\geq \mu(M_{1})$. In sum, we obtained 
$$\mu(M^{\prime})>\mu(M_{1}+M^{\prime})>\mu\big(M_{1}/(M_{1}\cap M^{\prime})\big)\geq \mu(M_{1}).$$ However, $M_{1}$ is the  scss subrepresentation of $M$. We arrive at a contradiction.

For the second implication of the lemma, if $M^{\prime}$ is the scss subrepresentation of $M$, it is obviously $\mu$-semistable. Conversely, if $M^{\prime}$ is $\mu$-semistable, since it contains $M_1$, we must have $\mu(M_1)\leq \mu(M^{\prime})$. This implies $\mu(M_1)=\mu(M^{\prime})$ so that $M^{\prime} = M_1$.
\end{proof}

Now we can make sense of the following proposition.

\begin{proposition}\label{dsicrepancy_preserved_quotient_by_M_1}
Let $M$ be a $\mu$-unstable representation, and let  $M_1$ be the scss subrepresentation of $M$ with $\Theta(M_1)>0$. If $M^{\prime}$ is $\Theta$-optimal in $M$, we then have $$\Theta(M^{\prime}/M_1)=\disc(M/M_1,\Theta).$$ Namely, $M^{\prime}/M_1$ is $\Theta$-optimal in $M/M_1$.
\end{proposition}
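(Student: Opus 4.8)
The plan is to reduce everything to \Cref{discrep_contains_1st_HN_term} together with the additivity of $\Theta$ on short exact sequences. The first observation is that \Cref{discrep_contains_1st_HN_term} already tells us $M_1\subseteq M'$, so that $M'/M_1$ is genuinely a subrepresentation of $M/M_1$ and the quantity $\Theta(M'/M_1)$ makes sense; this immediately yields the inequality $\Theta(M'/M_1)\leq \disc(M/M_1,\Theta)$, and so only the reverse inequality requires work.

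For the reverse inequality, I would invoke the standard correspondence in the abelian category of representations of $Q$: every subrepresentation of $M/M_1$ is of the form $N/M_1$ for a unique subrepresentation $N$ with $M_1\subseteq N\subseteq M$. Since $\Theta$ is a $\mathbf{Z}$-linear function of the dimension vector and $\pmb{\dim}N = \pmb{\dim}M_1 + \pmb{\dim}(N/M_1)$, we get $\Theta(N/M_1) = \Theta(N) - \Theta(M_1)$ for each such $N$. Hence
$$\disc(M/M_1,\Theta) = \max_{M_1\subseteq N\subseteq M}\big(\Theta(N) - \Theta(M_1)\big) = -\Theta(M_1) + \max_{M_1\subseteq N\subseteq M}\Theta(N).$$
Now every $N$ with $M_1\subseteq N\subseteq M$ is in particular a subrepresentation of $M$, so $\Theta(N)\leq \disc(M,\Theta) = \Theta(M')$; and by \Cref{discrep_contains_1st_HN_term} the representation $M'$ itself satisfies $M_1\subseteq M'\subseteq M$, so the maximum on the right is attained and equals $\Theta(M')$. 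Therefore $\disc(M/M_1,\Theta) = \Theta(M') - \Theta(M_1) = \Theta(M'/M_1)$, which is exactly the claim.

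There is essentially no serious obstacle here; the content of the proposition lies entirely in \Cref{discrep_contains_1st_HN_term}, which guarantees that some $\Theta$-optimal subrepresentation $M'$ contains $M_1$, so that restricting the maximization to subrepresentations containing $M_1$ does not lower the discrepancy. The only points to double-check are the lattice correspondence between subrepresentations of $M/M_1$ and subrepresentations of $M$ containing $M_1$, and the additivity $\Theta(N) = \Theta(M_1) + \Theta(N/M_1)$, both of which are routine.
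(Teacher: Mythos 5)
Your proof is correct and is essentially the paper's own argument: both reduce to the fact (from \Cref{discrep_contains_1st_HN_term}) that $M_1\subseteq M'$, use the additivity $\Theta(N)=\Theta(M_1)+\Theta(N/M_1)$ under the correspondence between subrepresentations of $M/M_1$ and subrepresentations of $M$ containing $M_1$, and then invoke the $\Theta$-optimality of $M'$ in $M$ to get the reverse inequality. The paper phrases this by comparing $M'/M_1$ with a chosen $\Theta$-optimal $N/M_1$ rather than writing the maximum explicitly, but the content is identical.
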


\begin{proof}
Suppose $N/M_1$ is $\Theta$-optimal in $M/M_1$. 
We then obviously have $$\Theta(M^{\prime}/M_1)\leq \disc(M/M_1,\Theta)=\Theta(N/M_1).$$ This is equivalent to $\Theta(M')\leq \Theta(N)$. However, $M^{\prime}$ is $\Theta$-optimal in $M$ by assumption. So we must have $\Theta(M')=\Theta(N)$, resulting in  $\Theta(M^{\prime}/M_1)= \Theta(N/M_1)=\disc(M/M_1,\Theta)$. The proposition is proved.
\end{proof}

\begin{theorem}\label{HN_contains_witness_discrepancy}
Let $M$ be a $\mu$-unstable representation and let $$0\subsetneq M_{1}\subsetneq \cdots \subsetneq M_{r-1}\subsetneq M_{r}=M$$ be its Harder-Narasimhan filtration. Suppose there is an integer $l$ such that $\mu(M_{l}/M_{l-1})>0$ but $\mu(M_{l+1}/M_{l})\leq 0$. Then the term $M_l$ in the filtration is $\Theta$-optimal in $M.$ In addition, $M_l$ has the highest slope among all $\Theta$-optimal subrepresentations.
\end{theorem}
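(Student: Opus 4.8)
The plan is to prove the statement by running \Cref{discrep_contains_1st_HN_term} and \Cref{dsicrepancy_preserved_quotient_by_M_1} down the chain of quotients $M\twoheadrightarrow M/M_1\twoheadrightarrow\cdots\twoheadrightarrow M/M_{l-1}$, and then reading off $\disc(M/M_l,\Theta)$ from the scss subrepresentation of $M/M_l$.

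First I would record two auxiliary observations about the given filtration. Since $\kappa$ is positive on nonzero representations and the Harder-Narasimhan slopes $\mu(M_j/M_{j-1})$ strictly decrease, the hypothesis $\mu(M_l/M_{l-1})>0$ forces $\Theta(M_j/M_{j-1})>0$ for all $1\le j\le l$, whence $\Theta(M_l)=\sum_{j=1}^{l}\Theta(M_j/M_{j-1})>0$. Second, a representation $N$ is $\mu$-semistable if and only if its scss subrepresentation equals $N$ (immediate from \Cref{scss_lem}); applied to the quotients, $M/M_i$ is $\mu$-semistable exactly when $M_{i+1}=M$, i.e.\ when $i=r-1$. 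As $\mu(M_{l+1}/M_l)$ is assumed to be defined we have $l\le r-1$, so for every $i\le l-1$ the quotient $M/M_i$ is $\mu$-unstable and, by the usual description of the Harder-Narasimhan filtration of a quotient, $M_{i+1}/M_i$ is its scss subrepresentation.

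Next I would fix an arbitrary $\Theta$-optimal subrepresentation $M'\subseteq M$ and prove by induction on $i=0,1,\dots,l-1$ that $M_{i+1}\subseteq M'$ and that $M'/M_{i+1}$ is $\Theta$-optimal in $M/M_{i+1}$. The base case $i=0$ applies \Cref{discrep_contains_1st_HN_term} and \Cref{dsicrepancy_preserved_quotient_by_M_1} to $M$ itself, whose scss $M_1$ has $\Theta(M_1)>0$ by the first observation; the inductive step repeats the argument with $M$, $M_1$, $M'$ replaced by $M/M_i$, $M_{i+1}/M_i$, $M'/M_i$, the required hypotheses being precisely the two observations above. Running the induction out to $i=l-1$ gives $M_l\subseteq M'$ and that $M'/M_l$ is $\Theta$-optimal in $M/M_l$. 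Now the scss subrepresentation of $M/M_l$ is $M_{l+1}/M_l$ and $\mu(M_{l+1}/M_l)\le 0$, so part (1) of \Cref{scss_lem} forces $\mu(P)\le 0$, hence $\Theta(P)\le 0$, for every nonzero $P\subseteq M/M_l$; therefore $\disc(M/M_l,\Theta)=0$ and $\Theta(M'/M_l)=0$. Additivity of $\Theta$ on $0\to M_l\to M'\to M'/M_l\to 0$ then yields $\Theta(M')=\Theta(M_l)$, and since $M'$ is $\Theta$-optimal in $M$ this gives $\Theta(M_l)=\disc(M,\Theta)$, i.e.\ $M_l$ is $\Theta$-optimal. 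For the last clause, every $\Theta$-optimal $M'$ contains $M_l$ by the induction, so $0<\kappa(M_l)\le\kappa(M')$; combining this with $\Theta(M')=\Theta(M_l)>0$ gives $\mu(M')=\Theta(M_l)/\kappa(M')\le\Theta(M_l)/\kappa(M_l)=\mu(M_l)$, so $M_l$ has the highest slope among $\Theta$-optimal subrepresentations.

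I expect the only delicate part to be the bookkeeping that keeps the hypotheses of \Cref{discrep_contains_1st_HN_term} and \Cref{dsicrepancy_preserved_quotient_by_M_1} valid along the whole chain — knowing that each $M/M_i$ with $i\le l-1$ is genuinely $\mu$-unstable (so both results apply) and that the relevant scss has strictly positive $\Theta$ — which is where the strict decrease of the Harder-Narasimhan slopes together with positivity of $\kappa$ does the work. The sign change $\mu(M_l/M_{l-1})>0\ge\mu(M_{l+1}/M_l)$ is exactly what collapses $\disc(M/M_l,\Theta)$ to $0$, so that step deserves to be isolated.
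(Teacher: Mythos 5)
Your proof is correct, and it leans on the same two supporting results (\Cref{discrep_contains_1st_HN_term} and \Cref{dsicrepancy_preserved_quotient_by_M_1}) that drive the paper's induction, but it reorganizes the argument in a way worth noting. The paper inducts on $l$ and does the real work in the base case $l=1$, via a three-step contradiction: assuming some $M'$ has $\Theta(M')>\Theta(M_1)$, it manufactures the subrepresentation $(M_1+M')/M_1$ of $M/M_1$ with positive slope, contradicting the scss property of $M_2/M_1$; the inductive step then peels off $M_1$ using \Cref{dsicrepancy_preserved_quotient_by_M_1}. You instead fix a $\Theta$-optimal $M'$, propagate the pair of statements ``$M_{i+1}\subseteq M'$ and $M'/M_{i+1}$ is $\Theta$-optimal in $M/M_{i+1}$'' down the chain of quotients, and replace the paper's base-case contradiction with the terminal observation that $\disc(M/M_l,\Theta)=0$, since every nonzero subrepresentation of $M/M_l$ has slope at most $\mu(M_{l+1}/M_l)\le 0$ by \Cref{scss_lem}. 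This buys you two things: the sign-change hypothesis is isolated in a single transparent step, and the containment $M_l\subseteq M'$ for \emph{every} optimal $M'$ falls out of the induction, which makes the final slope-maximality clause a one-line consequence ($\kappa(M_l)\le\kappa(M')$ and $\Theta(M')=\Theta(M_l)>0$), whereas the paper only gestures at that clause via ``induction together with \Cref{discrep_contains_1st_HN_term}.'' Your bookkeeping of the hypotheses (instability of $M/M_i$ for $i\le l-1$ from $l\le r-1$, and positivity of $\Theta(M_{j}/M_{j-1})$ for $j\le l$ from the strict decrease of the Harder--Narasimhan slopes) is exactly what is needed and is stated correctly.
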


\begin{proof}
We will prove the theorem by induction on $l$. Let $l=1$. We need to show that $\Theta(M_{1})\geq \Theta(M^{\prime})$ for every $M^{\prime}\subset M$. We break this down into several steps.
\begin{enumerate}
    \item For any $M^{\prime}_{1}\subset M_{1}$, we show that $\Theta(M_{1}^{\prime})\leq \Theta(M_{1})$. 
    \item Next, assuming on the contrary that there is an $M^{\prime}\subset M$ with $\Theta(M^{\prime})>\Theta(M_{1})$, we prove that such an $M^{\prime}$ induces the proper inclusion $M_{1}\subsetneq M_{1}+M^{\prime}$.
    \item  Finally, we deduce from step (1) and (2) that $\mu\big((M_{1}+M^{\prime})/M_{1})>\mu(M_2/M_{1}\big)$, contradicting the fact that $M_2/M_{1}$ is the scss subrepresentation of $M/M_1$.
\end{enumerate}
For step (1), suppose on the contrary that there is an $M_1^{\prime}\subsetneq M_1$ such that $\Theta(M_1^{\prime})>\Theta(M_1)$. We would then have $$\mu(M_1^{\prime})=\frac{\Theta(M_1^{\prime})}{\kappa(M_1^{\prime})}>\frac{\Theta(M_1)}{\kappa(M_1^{\prime})}> \frac{\Theta(M_1)}{\kappa(M_1)}=\mu(M_1).$$ Note that again we are using the assumption that $\Theta(M_1)$ is positive. Since $M_1$ is the scss subrepresentation of $M$, we arrive at a contradiction.  Step (1) is completed.

For step (2), since $M_{1}$ is scss in $M$, we get $\mu(M^{\prime})\leq \mu(M_{1})$. This together with the assumption that $\Theta(M^{\prime})>\Theta(M_{1})$ imply $\kappa(M^{\prime})>\kappa(M_{1})$. This clearly implies that $M^{\prime}$ is not contained in $M_{1}$ so that $M_{1}\subsetneq M_{1}+M^{\prime}$, finishing step (2).

For step (3), simply note that $\Theta(M^{\prime})>\Theta(M_{1})\geq\Theta(M_{1}\cap M^{\prime})$ by step (1). We then have  
\begin{equation*}\begin{split}\mu\big((M_{1}+M^{\prime})/M_1\big)&=\mu\big(M^{\prime}/(M_{1}\cap M^{\prime})\big)=\frac{\Theta(M^{\prime})-\Theta(M_{1}\cap M^{\prime})}{\kappa(M^{\prime})-\kappa(M_{1}\cap M^{\prime})}\\
&>0\geq \mu(M_2/M_{1}).\end{split}\end{equation*} These complete all three steps and therefore the base case of the induction.

If $\mu(M_{l+1}/M_{l})>0$ but $\mu(M_{l+2}/M_{l+1})\leq 0$, consider the Harder-Narasimhan filtration for $M/M_{1}$: 
$$0\subsetneq M_{2}/M_{1}\subsetneq \cdots\subsetneq M/M_{1}.$$ By the induction hypothesis, $\Theta(M_{l+1}/M_{1})=\disc(M/M_{1},\theta)$. Moreover, \Cref{dsicrepancy_preserved_quotient_by_M_1} implies $\disc(M/M_1,\Theta)=\disc(M,\Theta)-\Theta(M_1)$. Combining these two, we get $\Theta(M_{l+1})=\disc(M,\Theta)$, as desired. 

Finally, the maximality of slope of $M_l$ can also be proved by induction on $l$, together with \Cref{discrep_contains_1st_HN_term}.
\end{proof}

\begin{corollary}\label{HN_contains_witness_discrepancy_corollary}
Suppose $M$ is a $\mu$-unstable representation of $Q$, and $\Theta(M)=0$. Then there exists a term $M_i$ in the Harder-Narasimhan filtration of $M$ such that $\Theta(M_i)=\disc(M,\Theta)$, and such that $M_i$ has the highest slope among all subrepresentations of $M$ that witness $\disc(M,\Theta)$.
\end{corollary}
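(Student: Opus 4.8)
The plan is to derive this directly from \Cref{HN_contains_witness_discrepancy}: everything except the existence of the index $l$ appearing in that theorem is already contained in its conclusion, so the only real work is to show that, under the hypotheses $\Theta(M)=0$ and $M$ being $\mu$-unstable, such an $l$ necessarily exists.

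First I would record two sign facts about the Harder-Narasimhan filtration $0=M_0\subsetneq M_1\subsetneq\cdots\subsetneq M_r=M$ of $M$. Since $M$ is $\mu$-unstable there is a subrepresentation $N$ with $\mu(N)>\mu(M)$, and as $M_1$ is the scss subrepresentation its slope is maximal among subrepresentations, so $\mu(M_1/M_0)=\mu(M_1)\ge\mu(N)>\mu(M)=\Theta(M)/\kappa(M)=0$. For the other end I claim $\mu(M_r/M_{r-1})\le 0$: if instead $\mu(M_r/M_{r-1})>0$, then since the successive slopes $\mu(M_1/M_0)>\mu(M_2/M_1)>\cdots>\mu(M_r/M_{r-1})$ are strictly decreasing by \Cref{HN}, every graded piece $M_i/M_{i-1}$ has positive slope, hence, using $\kappa(M_i/M_{i-1})>0$, satisfies $\Theta(M_i/M_{i-1})>0$; summing over $i$ would give $\Theta(M)=\sum_{i=1}^{r}\Theta(M_i/M_{i-1})>0$, contradicting $\Theta(M)=0$. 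In particular these two facts force $r\ge 2$.

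Next I would let $l$ be the largest index in $\{1,\dots,r-1\}$ with $\mu(M_l/M_{l-1})>0$; this set is nonempty because $\mu(M_1/M_0)>0$, and by maximality of $l$ together with the strict monotonicity of the Harder-Narasimhan slopes we get $\mu(M_{l+1}/M_l)\le 0$. Thus the hypothesis of \Cref{HN_contains_witness_discrepancy} is met, and that theorem gives both that $M_l$ is $\Theta$-optimal in $M$, i.e. $\Theta(M_l)=\disc(M,\Theta)$, and that $M_l$ has the highest slope among all $\Theta$-optimal subrepresentations; by the terminology of \Cref{discrepancy} this is exactly the corollary, with $i=l$.

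I do not expect any serious obstacle here, as the statement is essentially a repackaging of \Cref{HN_contains_witness_discrepancy}. The one place where the hypothesis $\Theta(M)=0$ is genuinely used — rather than just $\mu$-instability — is the inequality $\mu(M_r/M_{r-1})\le 0$, which is what prevents the positive-slope part of the filtration from being all of $M$ and hence keeps the index $l$ within the range $\{1,\dots,r-1\}$; this is the step I would phrase with the most care, relying on the additivity of $\Theta$ along the graded pieces and the positivity of $\kappa$.
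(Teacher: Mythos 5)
Your proposal is correct and follows essentially the same route as the paper: both reduce the corollary to locating the transition index $l$ of \Cref{HN_contains_witness_discrepancy}, and both rule out its non-existence by observing that otherwise every graded piece $M_i/M_{i-1}$ would have positive slope, forcing $\Theta(M)>0$ and contradicting $\Theta(M)=0$ (the paper phrases this as the absurd chain $0=\Theta(M)>\Theta(M_{r-1})>\cdots>\Theta(M_1)>0$, which is just the telescoped form of your additivity argument). Your write-up is slightly more explicit than the paper's in checking $\mu(M_1)>0$ from instability, but there is no substantive difference.
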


\begin{proof}
Let $0\subsetneq M_1\subsetneq\cdots\subsetneq M_r =M$ be the Harder-Narasimhan filtration of $M$. The integer $l$ described in \Cref{HN_contains_witness_discrepancy} must exist. For if this is not the case, we then have $$0=\Theta(M)>\Theta(M_{r-1})>\cdots>\Theta(M_1)>0,$$ which is absurd. 
\end{proof}

We now turn our attention to the computation of Harder-Narasimhan filtrations for representations of $Q$. This comes down to computing the scss subrepresentation. The main ideas come from \Cref{discrep_contains_1st_HN_term}, and the following machinery that works for arbitrary infinite fields:
\begin{theorem}\label{discrepancy_rep_algorithm}
Let $Q$ be an acylcic quiver and let $M$ be a representation of $Q$ over an infinite field. Fix a weight $\theta:\mathbf{Z}^{Q_0}\rightarrow\mathbf{Z}$ with $\theta(M)=0$. There is a deterministic algorithm that finds the discrepancy $\disc(M,\theta)$, together with a subrepresentation $M'$ so that $\theta(M')=\disc(M,\theta)$. If we set  $\Omega=\sum_{v\in Q_0}|\theta(v)|$, $K=\sum_{v\in Q_0}\dim M_v$, and $P$ as the number of paths in $Q$, then the algorithm has run time complexity that is polynomial in $\Omega,K,P$. 
\end{theorem}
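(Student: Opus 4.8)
The plan is to reduce the computation of $\disc(M,\theta)$, together with a witnessing subrepresentation, to a single invocation of the constructive non-commutative rank algorithm \Cref{thm_1.5_ISQ18} of \cite{MR3868734}. Two reductions intervene, both essentially in the literature: a combinatorial passage from an arbitrary acyclic quiver to the data of a ``path quiver'' on the nonzero-weight vertices, carried out in \cite{https://doi.org/10.48550/arxiv.2111.00039}, and a passage from a discrepancy instance to a non-commutative rank instance, which is the content of \cite{MR4238986} in the bipartite case. The job here is to assemble these steps and to bound the running time in terms of $\Omega$, $K$, and $P$. In outline: (i) delete the weight-zero vertices and encode subrepresentations of $M$ by tuples of subspaces at the nonzero-weight vertices that are compatible along paths; (ii) rewrite $\disc(M,\theta)$ as a maximum shrunk-subspace deficiency by blowing up each vertex $v$ into $|\theta(v)|$ copies, so the weights become multiplicities; (iii) call \Cref{thm_1.5_ISQ18} on the resulting matrix space and unwind the construction to obtain both the value $\disc(M,\theta)$ and a subrepresentation attaining it.

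For step (i), set $S=\{v\in Q_0:\theta(v)\neq 0\}$. I would first prove the extension lemma: a tuple $(N_v\subseteq M_v)_{v\in S}$ is the restriction of some subrepresentation $N\subseteq M$ if and only if $M(p)(N_{tp})\subseteq N_{hp}$ for every path $p$ of $Q$ with $tp,hp\in S$; for the nontrivial direction one verifies that assigning to each vertex $w\notin S$ the subspace spanned by $M(p)(N_{tp})$ over all paths $p$ with $tp\in S$ and $hp=w$ yields such an extension. Since $\theta(N)=\sum_{v\in S}\theta(v)\dim N_v$ ignores the vertices outside $S$, this identifies $\disc(M,\theta)$ with the maximum of $\sum_{v\in S}\theta(v)\dim N_v$ over path-compatible tuples --- equivalently, with the discrepancy for the restricted weight of the quiver $\widetilde Q$ whose vertex set is $S$ and which has one arrow $tp\to hp$, with map $M(p)$, for each path $p$ of $Q$ internal to $S$. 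There are at most $P$ such arrows, and each $M(p)$ is a product of at most $|Q_0|$ of the given maps, so $\widetilde Q$ can be built in time polynomial in $K$ and $P$.

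For steps (ii)--(iii), I would fix the subspaces of a path-compatible tuple at the positively weighted vertices of $S$; the negatively weighted vertices are then pinned down from below (by the images of the path maps, closed up along paths internal to the negative part), and choosing them minimally turns the discrepancy of $\widetilde Q$ into the maximum of a linear form $\sum\theta(v)\dim N_v$ with positive and negative coefficients. Blowing up each $v\in S$ into $|\theta(v)|$ copies --- which enlarges all matrix sizes by at most the factor $\Omega$ --- clears the coefficients, and, exactly as in the bipartite case of \cite{MR4238986}, the quantity that survives equals $\dim(\text{domain})-\mathrm{ncrk}(\mathcal A)$ for an explicit matrix space $\mathcal A$ built from the blown-up path maps, i.e.\ the maximum deficiency $\max_U(\dim U-\dim \mathcal A U)$ that \Cref{thm_1.5_ISQ18} returns with a witness. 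The input to that algorithm has size polynomial in $\Omega$, $K$, and $P$, so it runs within the claimed time bound and outputs both the optimal value and an optimal subspace $U$; from $U$ one reads the subspaces at the positive vertices, then the forced ones at the negative vertices, and then, via the extension lemma, a genuine subrepresentation $M'\subseteq M$ with $\theta(M')=\disc(M,\theta)$.

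The step I expect to be the main obstacle is proving that this chain of reductions is \emph{exact}. Deleting the weight-zero vertices and replacing $Q$ by the path quiver $\widetilde Q$ must not alter the maximum, which is what the extension lemma buys; but $\widetilde Q$ need not be bipartite --- it can carry arrows between two positively weighted vertices, or from a negatively to a positively weighted one --- so the interaction of the resulting constraints, and the verification that the blow-up and the non-commutative rank formula capture them without over- or under-counting (in particular, that the a priori unrestricted maximum defining $\mathrm{ncrk}$ is already attained on the product-type subspaces coming from the blow-up), is where the care lies. Dually, one must convert the \emph{minimum} characterizing $\mathrm{ncrk}$ and the shrunk-subspace witness produced by \Cref{thm_1.5_ISQ18} back into the \emph{maximum} defining $\disc(M,\theta)$ and into an honest $\theta$-optimal subrepresentation, not merely the correct numerical value. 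Once these correspondences are in place, the complexity claim is a routine accounting of the preprocessing --- path enumeration, matrix products, blow-up --- against the guaranteed running time of \Cref{thm_1.5_ISQ18}.
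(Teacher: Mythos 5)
Your overall strategy is the same as the paper's (which follows the reduction in \cite{https://doi.org/10.48550/arxiv.2111.00039} of the acyclic case to the constructive non-commutative rank algorithm of \Cref{thm_1.5_ISQ18}): enumerate paths, blow up each vertex $v$ with $\theta(v)\neq 0$ into $|\theta(v)|$ copies, form the block matrix space spanned by the path maps, and call \Cref{thm_1.5_ISQ18}. The easy inequality $\disc(M,\theta)\le\disc(\mathcal A)$ is indeed routine. But the converse direction --- which you explicitly flag as ``where the care lies'' and then do not carry out --- is the actual mathematical content of the theorem, and your proposal contains no argument for it. The difficulty is exactly the one you name: the deficiency $\max_U(\dim U-\dim\mathcal A(U))$ is a priori maximized over arbitrary subspaces $U$ of the blown-up space $\bigoplus_i M_{x_i}^{\theta_+(x_i)}$, and there is no reason an arbitrary optimizer should be of product form $\bigoplus_i (M'_{x_i})^{\theta_+(x_i)}$ with a single subspace per vertex, let alone have components stable under the maps $M(p)$ for paths $p$ between two positively weighted vertices --- and these positive-to-positive (and negative-to-negative, negative-to-positive) paths are precisely what is absent from the bipartite case of \cite{MR4238986} that you invoke. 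Without this, one can neither conclude $\disc(\mathcal A)\le\disc(M,\theta)$ nor read off a subrepresentation from the witness $U$.

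The missing ingredient is the minimality clause in \Cref{thm_1.5_ISQ18}: the algorithm returns the \emph{minimal} $c$-shrunk subspace $U$, and minimality forces $U$ to be a right module over $\Endo(P_1)$, where $P_1=\bigoplus_i P_{x_i}^{\theta_+(x_i)}$ is the relevant projective and paths act by precomposition; likewise $V=\mathcal A(U)$ is a right $\Endo(P_0)$-module. This module structure is what shows that $U=\bigoplus_i(M'_{x_i})^{\theta_+(x_i)}$ with each $M'_{x_i}$ independent of the copy index and closed under all positive-to-positive path maps, and that $V$ has the analogous form at the negative vertices; one then extends to the weight-zero vertices by images of paths (your extension lemma) and checks, case by case over the types of paths, that the result is a genuine subrepresentation $M'$ with $\theta(M')=\dim U-\dim\mathcal A(U)=c$, which simultaneously proves $\disc(M,\theta)=\disc(\mathcal A)$ and produces the witness. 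If you treated \Cref{thm_1.5_ISQ18} as a black box returning an arbitrary optimal shrunk subspace, the unwinding step of your plan would fail. So the proposal identifies the right reduction and the right complexity bookkeeping, but it is missing the module-theoretic argument that is the heart of the proof.
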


We refer interested readers to \Cref{appendixB} for a proof of the theorem. It is presented at \Cref{proposition3.5_hus}. We now explain how \Cref{discrep_contains_1st_HN_term}, together with the algorithm of \Cref{discrepancy_rep_algorithm} can be applied for deriving the scss subrepresentation. For convenience, we adopt the following notations:
\begin{notation}
For any representation $M$ of $Q$ of dimension $\mathbf{d}$, $F(M)$ denotes the output subrepresentation of the algorithm in \Cref{discrepancy_rep_algorithm} applied to $M$ and to the weight $\theta_{\mathbf{d}}$.  We let $G(M)=\disc(M,\theta_{\mathbf{d}})=\theta_{\mathbf{d}}(F(M)).$
\end{notation}
As a first instance of the convenience of notations just introduced, we can say a representation $M$ is $\mu$-semistable (resp. $\mu$-unstable) if and only if $G(M)=0$ (resp. $G(M)>0$) (see \Cref{connecting_slope_weight_stability}).

Now let $M=M^0$ be a representation of $Q$. If $G(M^0)=0$, then $M^0$ itself is the scss subrepresentation. Otherwise, let $M^1=F(M^0)$. \Cref{discrep_contains_1st_HN_term} then dictates that $G(M^1)=0$ if and only if $M^1$ is the scss subrepresentation of $M$. If $G(M^1)>0$, then $M^2=F(M^1)$ is a proper subrepresentation of $M^1$. We continue to produce $M^i=F(M^{i-1})$ if $G(M^{i-1})>0$. The procedure must end in a finite number of steps as $M$ is finite dimensional at each vertex, and each $M^{i}$ is a proper subrepresentation of $M^{i-1}$. At the end, we have a filtration $$0\subsetneq M^r\subsetneq M^{r-1}\subsetneq\cdots\subsetneq M^1\subsetneq M^0,$$ where 
\begin{enumerate}
    \item $\theta_{\mathbf{d}_{i-1}}(M^i)>0$,
    \item $M^{i}$ is $\theta_{\mathbf{d}_{i-1}}$-optimal in $M^{i-1}$, and 
    \item $M^{r}$ is $\mu$-semistable.
\end{enumerate}

The following proposition ensures that in this case $M^r$ is the scss subrepresentation of $M$. 

\begin{proposition}\label{Algorithm_produces_M1}
Let $0\subsetneq M^{r}\subsetneq\ldots\subsetneq M^{0}$ be a filtration of representations of $Q$ and let $\pmb{\dim}M^{i}=\mathbf{d}_{i}\in\mathbf{Z}^{Q_{0}}$. Suppose for each $i$, $$\theta_{\mathbf{d}_{i-1}}(M^{i})=\disc(M^{i-1},\theta_{\mathbf{d}_{i-1}})>0.$$  
Then $M^{r}$ is $\mu$-semistable if and only if $M^{r}$ is the scss subrepresentation of $M^{i}$ for $i=0,\ldots,r-1$.
\end{proposition}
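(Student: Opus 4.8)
The plan is to prove the two directions separately, with the ``scss $\Rightarrow$ semistable'' direction being immediate and the converse being the substantive one. If $M^r$ is the scss subrepresentation of $M^0$ (or indeed of any $M^i$), then by definition it has maximal slope among subrepresentations of that representation, hence is $\mu$-semistable — this uses nothing beyond \Cref{scss_lem} and \Cref{scissor}. So the work is entirely in the forward direction: assuming $M^r$ is $\mu$-semistable, show that $M^r$ is simultaneously the scss subrepresentation of every $M^i$ for $i=0,\ldots,r-1$.

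For the converse I would argue by downward induction on $i$, starting at $i=r-1$ and decreasing to $i=0$. The base step $i=r-1$ is exactly \Cref{discrep_contains_1st_HN_term}: we are given $\theta_{\mathbf{d}_{r-1}}(M^r)=\disc(M^{r-1},\theta_{\mathbf{d}_{r-1}})>0$, so $M^r$ is $\theta_{\mathbf{d}_{r-1}}$-optimal in $M^{r-1}$ with $\Theta$-type weight positive on the scss term, and since we are assuming $M^r$ is $\mu$-semistable, the second sentence of \Cref{discrep_contains_1st_HN_term} forces $M^r$ to equal the scss subrepresentation of $M^{r-1}$. For the inductive step, suppose we already know $M^r$ is the scss subrepresentation of $M^{i}$; I want to conclude it is the scss subrepresentation of $M^{i-1}$. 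Here the natural tool is \Cref{dsicrepancy_preserved_quotient_by_M_1} together with \Cref{discrep_contains_1st_HN_term}, but applied with respect to the weight $\theta_{\mathbf{d}_{i-1}}$: let $N^{i-1}$ be the scss subrepresentation of $M^{i-1}$; since $\theta_{\mathbf{d}_{i-1}}(M^i) = \disc(M^{i-1},\theta_{\mathbf{d}_{i-1}}) > 0$ we know $\Theta$ (rescaled appropriately) is positive on $N^{i-1}$, so \Cref{discrep_contains_1st_HN_term} applied inside $M^{i-1}$ tells us $N^{i-1}\subseteq M^i$, and moreover $M^i$ is $\mu$-semistable if and only if $M^i = N^{i-1}$. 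The remaining task is to check $M^i$ is itself $\mu$-semistable — which it is, because by the induction hypothesis $M^r$ is the scss subrepresentation of $M^i$ and $M^r=M^r$, so actually I need $M^i$ semistable; but $M^i$ has scss subrepresentation $M^r\subsetneq M^i$ only if $\mu(M^r) > \mu(M^i/\text{(lower terms)})$...

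Let me restate the inductive engine more carefully, since this is where the main obstacle lies. The cleanest route is: for each $i$, first show directly (by a second, inner induction, or by invoking the already-established chain) that $M^i$ is $\mu$-semistable as soon as $M^r$ is — this follows because the hypotheses say each $M^{j+1}$ is $\theta_{\mathbf{d}_j}$-optimal in $M^j$ with positive discrepancy, so by \Cref{discrep_contains_1st_HN_term} the scss subrepresentation of $M^j$ is contained in $M^{j+1}$; iterating, the scss of $M^i$ is contained in $M^r$, but if $M^r$ is $\mu$-semistable then the scss of $M^i$, being a subrepresentation of $M^r$, has slope $\le \mu(M^r) \le \mu(\text{scss of }M^i)$, forcing equalities and hence the scss of $M^i$ equals $M^r$ if the slopes match, which they do only when $M^r$ already is that scss — so in fact $M^r$ being semistable propagates upward to give $M^i$ semistable and $M^r$ = scss$(M^i)$ in one stroke. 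Then \Cref{discrep_contains_1st_HN_term} at level $i-1$ (using $M^i$ semistable, $M^i$ optimal in $M^{i-1}$, positive discrepancy) gives $M^i = $ scss$(M^{i-1})$, and in particular scss$(M^{i-1}) = M^i \supseteq M^r$; combined with the semistability of $M^r$ the same slope-comparison argument upgrades this to $M^r = $ scss$(M^{i-1})$. I expect the main obstacle to be bookkeeping the interplay between the varying weights $\theta_{\mathbf{d}_i}$ (each $M^i$ carries its own dimension vector, so the weight used at each stage differs) and making sure \Cref{discrep_contains_1st_HN_term} and \Cref{dsicrepancy_preserved_quotient_by_M_1} are invoked with the correct weight at the correct level; the slope comparisons themselves are routine once \Cref{scissor} is in hand, since $\mu$-semistability is insensitive to which $\theta_{\mathbf{d}}$ we pass through (\Cref{connecting_slope_weight_stability}).
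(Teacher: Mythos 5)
Your overall skeleton --- the ``if'' direction is immediate, and the ``only if'' direction goes by downward induction on $i$ with the first part of \Cref{discrep_contains_1st_HN_term} supplying the containment $\mathrm{scss}(M^{i-1})\subseteq M^{i}$ --- is exactly the paper's strategy, and your base case $i=r-1$ is handled correctly. But the inductive step as you have written it rests on two assertions that are false in general. First, you claim that $M^{i}$ is $\mu$-semistable for $i<r$. It is not: the hypothesis $\disc(M^{i},\theta_{\mathbf{d}_{i}})>0$ already forces $M^{i}$ to be $\mu$-unstable (\Cref{connecting_slope_weight_stability}), consistent with the fact that once $M^{r}=\mathrm{scss}(M^{i})$ with $M^{r}\subsetneq M^{i}$, the representation $M^{i}$ properly contains its scss and hence cannot be semistable. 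Second, you deduce $M^{i}=\mathrm{scss}(M^{i-1})$ from the second sentence of \Cref{discrep_contains_1st_HN_term}; that sentence requires the optimal subrepresentation to be $\mu$-semistable as input, and its conclusion fails here --- the $\theta_{\mathbf{d}_{i-1}}$-optimal subrepresentation $M^{i}$ in general strictly contains the scss, which is precisely why the inner loop of \Cref{HN_alg_psuedo_code} must iterate at all. Your first attempt at the step visibly stalls on exactly this point before you restate it, and the restated version still routes through both false claims.

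The missing (and sufficient) observation is the one the paper uses to close the induction: since $\mathrm{scss}(M^{i-1})\subseteq M^{i}\subseteq M^{i-1}$ and every subrepresentation of $M^{i}$ is also one of $M^{i-1}$, the subrepresentation $\mathrm{scss}(M^{i-1})$ has maximal slope among subrepresentations of $M^{i}$ and is the maximal one with that slope, so $\mathrm{scss}(M^{i-1})=\mathrm{scss}(M^{i})$; the inductive hypothesis $\mathrm{scss}(M^{i})=M^{r}$ then gives $\mathrm{scss}(M^{i-1})=M^{r}$, with no semistability claim about $M^{i}$ ever needed. Your ``iterating'' remark and the closing slope comparison (a subrepresentation of the semistable $M^{r}$ whose slope is at least $\mu(M^{r})$ forces equality via \Cref{scss_lem}) gesture at this identity and would, if spelled out, yield a correct variant of the argument; but as written the inductive step leans on the two false intermediate claims, so it does not stand without repair.
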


\begin{proof}
The if part is trivial. The proof for the only if part can be carried out by induction. We let $M_1^{i}$ be the scss subrepresentation of $M^i$. Suppose $M^{r}$ is $\mu$-semistable. Then \Cref{discrep_contains_1st_HN_term} (applied to $\theta_{\mathbf{d}_{r-1}}$) implies $M^{r}=M^{r-1}_1$.  Now suppose $M^{r}=M^{i}_1$, we will show that $M^{r}=M^{i-1}_1$. For this, since $M^{i}$ is $\theta_{\mathbf{d}_{i-1}}$-optimal in $M^{i-1}$, \Cref{discrep_contains_1st_HN_term} implies $M^{i}$ contains $M^{i-1}_{1}$. This immediately implies $M^{i-1}_1=M^{i}_1$.  By the induction hypothesis, $M^r = M^{i}_1$ so that $M^{r}=M^{i-1}_{1}$, finishing the induction.
\end{proof}

We therefore propose the following algorithm to compute the Harder-Narasimhan filtration of a representation of an acyclic quiver. The outer while loop tests at each $i$-th step if $M/M_i$ is $\mu$-semistable. If not, the inner while loop then  computes the scss subrepresentation $M_{i+1}/M_i$ of $M/M_i$. The correctness of the inner loop is established by \Cref{Algorithm_produces_M1}. In the end of the algorithm we have $M_1$, $M_2/M_1,\ldots,M/M_{r-1}$ for some $r\geq 1$, where each $M_{i+1}/M_{i}$ is the scss subrepresentation of $M/M_{i+1}$, and $M/M_{r-1}$ is $\mu$-semistable. From these the Harder-Narasimhan filtration for $M$ is immediate.
\begin{algorithm}[H]
\begin{flushleft}
\algorithmicrequire{$\text{ A Representation }M\text{ of }Q$}
\algorithmicensure{$\text{ The Harder-Narasimhan filtration of }M$}\end{flushleft}
\begin{algorithmic}
\caption{An algorithm for Harder-Narasimhan filtrations}\label{HN_alg_psuedo_code}
\If {$G(M)=0$}
    \State {$\text{return }M$} \Comment{In this case $M$ is semistable, so the filration is $M$ itself.}
\EndIf
\While{$G(M)>0$}
    \State{$N\gets F(M)$}
    \While{$G(N)>0$} 
        \State $N\gets F(N)$
    \EndWhile
    \State $\text{Record }N$ \Comment{$N$ is the scss subrepresentation of $M$}
    \State $\text{Compute }M/N$
    \State $M\gets M/N$
\EndWhile
\end{algorithmic}
\end{algorithm}

We now show that the above algorithm satisfy the complexity bound given in the
\begin{theorem}\label{thmB_part_I}
Let $Q$ be an acyclic quiver and let $M$ be a representation of $Q$ over an infinite field. Let   $\Theta,\kappa:\mathbf{Z}^{Q_0}\rightarrow\mathbf{Z}$ be two weights that define the slope $\mu=\Theta/\kappa$, $P$ be the number of paths in $Q$, $\Omega=\sum_{v}|\Theta(v)|$, and let  $K=\kappa(M)$. \Cref{HN_alg_psuedo_code} constructs the Harder-Narasimhan filtration of $M$ within time complexity that is polynomial in $\Omega,K,$ and $P$. 
\end{theorem}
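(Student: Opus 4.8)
The plan is to track two kinds of loop iterations separately: the inner while loop, which refines a single subrepresentation down to an scss subrepresentation, and the outer while loop, which peels off successive Harder-Narasimhan pieces. For the inner loop, starting from $M^0$ with dimension vector $\mathbf{d}_0$, each pass replaces $M^{i-1}$ by $M^i = F(M^{i-1})$, and by \Cref{discrep_contains_1st_HN_term} (applied via \Cref{connecting_slope_weight_stability} to $\theta_{\mathbf{d}_{i-1}}$) the inclusion $M^i \subsetneq M^{i-1}$ is strict whenever $G(M^{i-1}) > 0$. Since $M^i$ is a proper subrepresentation, $\sum_v \dim M^i_v$ drops by at least one each time, so the inner loop runs at most $K_0 := \sum_v \dim M_v$ times; note $K_0 \le K$ up to a constant depending only on $\kappa$ (since $\kappa((\mathbf{Z}^+)^{Q_0}) > 0$ forces $\kappa(v) \ge 1$ at each vertex, so $\sum_v \dim M_v \le \kappa(M) = K$). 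Likewise the outer loop strictly decreases $\sum_v \dim M_v$ each pass, so it runs at most $K$ times, and the two bounds multiply to give $O(K^2)$ total calls to the subroutine $F$ (plus $O(K^2)$ quotient computations, each of which is a routine linear-algebra operation polynomial in $K$ and $P$).

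Next I would verify that each individual call to $F$ lands within the complexity bound of \Cref{discrepancy_rep_algorithm}. That theorem gives run time polynomial in $\Omega' := \sum_v |\theta_{\mathbf{d}_i}(v)|$, in $K_i := \sum_v \dim M^i_v$, and in $P$. The only subtlety is bounding $\Omega'$: by definition $\theta_{\mathbf{d}}(v) = \kappa(\mathbf{d})\Theta(v) - \Theta(\mathbf{d})\kappa(v)$, so $|\theta_{\mathbf{d}}(v)| \le \kappa(\mathbf{d})|\Theta(v)| + |\Theta(\mathbf{d})||\kappa(v)|$. Each of $\kappa(\mathbf{d})$, $|\Theta(\mathbf{d})|$, $|\Theta(v)|$, $|\kappa(v)|$ is bounded by a fixed polynomial in $\Omega$, $K$, and the (constant) data of $\kappa$ — indeed $\kappa(\mathbf{d}) \le K$ and $|\Theta(\mathbf{d})| \le \Omega \cdot \max_v \mathbf{d}_v \le \Omega K$ — so $\Omega'$ is polynomial in $\Omega$ and $K$. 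Since every dimension vector $\mathbf{d}_i$ arising in the algorithm is componentwise $\le \mathbf{d}$, the same bound holds uniformly across all calls. Hence each call to $F$ costs $\mathrm{poly}(\Omega, K, P)$.

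Finally I would assemble the pieces: $O(K^2)$ calls to $F$, each costing $\mathrm{poly}(\Omega, K, P)$, plus $O(K^2)$ auxiliary quotient and comparison steps each costing $\mathrm{poly}(K, P)$, together with the $G$-evaluations which are just the $\theta_{\mathbf{d}}$-values returned by $F$. The total is polynomial in $\Omega$, $K$, and $P$. I should also remark, to match the introduction, that when $Q$ is bipartite with all arrows from one side to the other, $P = |Q_1|$, so the bound is genuinely polynomial in the size of the input quiver. The main obstacle I anticipate is not the loop-counting — that is straightforward from the strict-inclusion property guaranteed by \Cref{discrep_contains_1st_HN_term} and \Cref{Algorithm_produces_M1} — but rather making the dependence on the weights honest: one has to be careful that replacing $\Theta$ by the derived weights $\theta_{\mathbf{d}_i}$ at every stage does not blow up the bit-size of the weights beyond a polynomial factor, and that the complexity parameter in \Cref{discrepancy_rep_algorithm} is the $\ell^1$-norm $\Omega'$ of the weight actually fed to it rather than of the original $\Theta$. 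Once that bookkeeping is pinned down, the result follows.
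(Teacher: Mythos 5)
Your proposal is correct and follows essentially the same route as the paper's proof: bound both loops by $K$ iterations via strict inclusions, bound $\sum_v|\theta_{\mathbf{d}_i}(v)|$ by a polynomial in $\Omega$ and $K$ (the paper gets $\Omega K+\Omega K^2$), and note that each quotient computation is routine linear algebra polynomial in $K$ and $P$. The bookkeeping subtlety you flag at the end is exactly the point the paper's proof spends its display on, so nothing is missing.
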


Note that this is only the first part of \Cref{Effective_alg_intro}. We postpone the precise statement and the proof of the second part to \Cref{one_PS_via_Kempf's_filtration} at the end of the next section.
\begin{proof}[proof of \Cref{thmB_part_I}]
For starters, in the $i$-th outer while loop of \Cref{HN_alg_psuedo_code} ($i$ starts from 0), the first line uses the algorithm of \Cref{discrepancy_rep_algorithm} to test $\mu$-stability of $M/M_{i}$. Let $\mathbf{d}^{i}$ be the dimension vector for the quotient $M/M_{i}$. \Cref{discrepancy_rep_algorithm} states that the time complexity to determine stability of $M/M_i$ is polynomial in $\sum_{v}|\theta_{\mathbf{d}^i}(v)|$, $\sum_{v}\dim (M/M_i)_{v}$, and $P$. Obviously the second term is bounded by $K$. To bound the first term by a polynomial in $\Omega$ and $K$, we first note that 
$$|\Theta(M/M_i)|\leq\sum_{v}|\Theta(v)|\dim(M/M_i)_v\leq \Omega K.$$ Therefore \begin{equation*}
\begin{split}
    \sum_v|\theta_{\mathbf{d}^{i}}(v)|&=\sum_v|\kappa(M/M_i)\Theta(v)-\Theta(M/M_i)\kappa(v)|\\
    &\leq \sum_v \kappa(M/M_i)|\Theta(v)|+|\Theta(M/M_i)|\kappa(v)\\
    &\leq \Omega K+\Omega K^2.
    \end{split}
\end{equation*} 
Hence, the first line of \Cref{HN_alg_psuedo_code} in the outer while loop has time complexity polynomial in $\Omega,K,P$. 

Next, in the $j$-th inner while loop, we are testing $\mu$-stability of some subrepresentation $M^{j}_i/M_i$ of $M/M_i$ using \Cref{discrepancy_rep_algorithm}. As before, the time complexity is bounded by a polynomial in $\Omega,K,P.$ Since there are at most $\sum_v\dim M_v\leq K$ many inner while loops, completing the inner while loop has again time complexity polynomial in $\Omega,K,P$. 

After the inner loop, we arrive at the scss subrpresentation $M_{i+1}/M_{i}$ of $M/M_{i}$. To compute a basis for the quotient $(M/M_{i+1})_v$ at each vertex $v$, we may compute a basis for a complement of $(M_{i+1}/M_{i})_v$ in $(M/M_{i})_v$. This can be done by computing the null space of the basis matrix for $(M_{i+1}/M_{i})_v$, which has complexity $O\big(\dim (M_{i+1}/M_i)_v^2\dim (M/M_i)_v\big).$ The complexity to compute bases at all vertices can therefore be bounded by $O(K^3)$. Next, for each arrow $a\in Q_1$, the map $(M/M_{i+1})(a)$ can be obtained via the following change of bases to the original map 
$$(M/M_{i})(a):(M_{i+1}/M_{i})_{ta}\oplus (M/M_{i+1})_{ta}\rightarrow (M_{i+1}/M_{i})_{ha}\oplus(M/M_{i+1})_{ha},$$ which has time complexity bounded by $O(K^3)$. Therefore, the time complexity to compute the quotient $M/M_{i+1}$ is bounded by a polynomial in $K$ and $P.$ 

In sum, we showed that each outer while loop has time complexity polynomial in $\Omega,K,P$. Since there can be at most $\sum_{v}\dim M_v\leq K$ outer loops, the total time complexity is still polynomial in $\Omega,K,P$.
\end{proof}

\section{Completing \texorpdfstring{\Cref{Effective_alg_intro}}{Theorem B}}\label{from_HN_to_OPS}
The major goal of this section is to complete \Cref{Effective_alg_intro}. That is, we are going to construct a maximally destabilizing one parameter subgroup from the Harder-Narasimhan filtration of an unstable representation. This is where GIT comes into play. 

GIT stability and its numerical criterion, known as the Hilbert-Mumford criterion (\Cref{HilbMumAffine}), were established by Mumford in \cite{MR1304906}. The Hilbert-Mumford criterion reduces testing stability for reductive group actions to one dimensional torus actions. This is done by restricting the action to one parameter subgroups of the reductive group. Mumford first  conjectured the existence of the one parameter subgroups that fail the numerical criterion maximally. The conjecture was then resolved by Kempf in his famous paper \cite{MR506989}. Beyond existence Kempf actually established the uniqueness of maximally destabilizing one parameter subgroups up to conjugacy by some parabolic subgroup (\Cref{theorem6}).

In \Cref{affine_GIT}, we will first recall Mumford's definition of stability (\Cref{Stability_def}) and the Hilbert-Mumford criterion (\Cref{HilbMumAffine}) in the affine setting. We then make precise the meaning of maximally destabilizing one parameter subgroups (\Cref{worst_one_PS_def}), and present Kempf's theorem (\Cref{theorem6}).

We then apply these machinery in the context of representations of quivers in \Cref{the_quiver_setting}. Finally in \Cref{the_complete_proof_of_thmB}, we recall in \Cref{one_PS_via_Kempf's_filtration} a way due to \cite{MR3199484} to construct a maximally destabilizing one parameter subgroup from the Harder-Narasimhan filtration of an unstable representation. This would complete \Cref{Effective_alg_intro}.

\subsection{Instability in affine geometric invariant theory}\label{affine_GIT}
Here we recall necessary notions and theorems from affine GIT. We work with a fixed algebraically closed field $k$. Let $G$ be a reductive group acting on an affine variety $X$. We let $\pmb{\Gamma}(G)$ denote the set of one parameter subroups of $G$. We fix a norm $||-||$ on $\pmb{\Gamma}(G)$. The norm would satisfy the following two properties:
\begin{enumerate}
    \item $||-||$ is invariant under conjugation. Namely, for any $g\in G$ and any $\lambda\in\pmb{\Gamma}(G)$, we have 
    $$||\lambda||=||g\lambda g^{-1}||.$$
    \item For any maximal torus $T\subset G$, the restriction of $||-||$ to the lattice $\pmb{\Gamma}(T)$ is induced by an inner product on the vector space $\pmb{\Gamma}(T)\otimes\mathbf{R}$ that is integral on $\pmb{\Gamma}(T)\times\pmb{\Gamma}(T)$.
\end{enumerate}
For the precise notion of a norm on the set of one parameter subgroups, we refer the readers to page 58 of \cite{MR1304906}. 

Stability in GIT depends on the choice of a linearized line bundle. In the affine setting, a character $\chi:G\rightarrow k^{\times}$ of $G$ induces a linearization of the trivial line bundle $\mathscr{O}_X$. Fix a character $\chi$.

For any one parameter subgroup $\lambda:k^{\times}\rightarrow G$ of $G$, we let $\langle\chi,\lambda\rangle$ be the integer that satisfies $(\chi\circ \lambda)(t)=t^{\langle\chi,\lambda\rangle}$ for all $t\in k^{\times}$. 
\begin{definition}\label{Stability_def}
An element $f\in k[X]$ is $\chi$-\emph{invariant of weight d} if $f(g\cdot x)=\chi^{d}(g^{-1})f(x)$ for all $g\in G$ and for all $x\in X$. We say a point $x\in X$
is $\chi$-\emph{semistable} if there is a $\chi$-invariant $f$ of positive
weight such that $f(x)\neq 0$. We say a point $x\in X$ is $\chi$-\emph{unstable} if $x$ is not $\chi$-semistable.
We write $X^{\text{ss}}(\chi)$ as the set of $\chi$-semistable points in $X$ and $X^{\text{us}}(\chi)$ as the complement $X-X^{\text{ss}}(\chi).$
\end{definition}

It follows from the definition that $X^{\text{ss}}(\chi)$ is a $G$-invariant open subvariety and that $X^{\text{us}}(\chi)$ is a $G$-invariant closed subvariety. Moreover, $X^{\text{ss}}(\chi)=X^{\text{ss}}(\chi^{d})$ for any $d>0$. 

\begin{remark}[\cite{MR1304906}] Let $k[X]_{\chi,d}$ be the space of $\chi$-invariant elements of weight $d$. The space $\oplus_{d\geq 0}k[X]_{\chi,d}$ has a natural graded ring structure. Let  $$X/\!\!/_{\chi}G:=\Proj(\oplus_{d\geq 0}k[X]_{\chi,d}).$$
Then there is a map $X^{\text{ss}}(\chi)\rightarrow X/\!\!/_{\chi}G$ that is constant on $G$-orbits, submersive and induces a bijection between  points in $X/\!\!/_{\chi}G$ and closed orbits in $X^{\text{ss}}(\chi)$. Moreover, $X/\!\!/_{\chi}G$ is a quasi-projective variety that is known as \emph{the GIT quotient of X by G with respect to }$\chi$. \end{remark}

We now introduce the Hilbert-Mumford criterion, which tests stability by restricting the group action to certain one parameter subgroups. Let $\lambda:k^{\times}\rightarrow G$ be a one parameter subgroup and let $x\in X$ be a point. We say $\lim\limits_{t\rightarrow 0}\lambda(t)\cdot x$ exists if the domain of the map $\lambda_{x}:k^{\times}\rightarrow X$ defined by $t\mapsto \lambda(t)\cdot x$ can be extended to the entire affine line. 
\begin{theorem}(Hilbert-Mumford criterion \cite{MR1304906}, \cite{MR1315461})\label{HilbMumAffine}
A point $x\in X$ is $\chi$-semistable if and only if for each one 
parameter subgroup $\lambda:k^{\times}\rightarrow G$ such that $\lim\limits_{t\rightarrow 0}\lambda(t)\cdot x$ exists,
we have $\langle\chi,\lambda\rangle\leq 0$.
\end{theorem}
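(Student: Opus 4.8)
The plan is to prove the two implications separately; the forward direction (semistability forces the numerical inequality) is elementary, while the converse rests on the classical Hilbert--Mumford limit lemma for reductive group actions on affine varieties.

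For the forward direction, suppose $x$ is $\chi$-semistable, so there is a $\chi$-invariant $f$ of some weight $d>0$ with $f(x)\neq 0$, and let $\lambda$ be a one parameter subgroup for which $t\mapsto\lambda(t)\cdot x$ extends to a morphism $\phi\colon\mathbf{A}^{1}\to X$. Then $f\circ\phi$ is a regular function on $\mathbf{A}^{1}$, hence a polynomial in $t$, while for $t\neq 0$ it equals $f(\lambda(t)\cdot x)=\chi(\lambda(t))^{-d}f(x)=t^{-d\langle\chi,\lambda\rangle}f(x)$. Since $f(x)\neq 0$, a polynomial can have this form only if $-d\langle\chi,\lambda\rangle\geq 0$, i.e. $\langle\chi,\lambda\rangle\leq 0$, as wanted.

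For the converse I would argue by contrapositive: assuming $x$ is $\chi$-unstable, I want to produce a one parameter subgroup $\lambda$ with $\lim_{t\to 0}\lambda(t)\cdot x$ existing and $\langle\chi,\lambda\rangle>0$. The key device is the twisted affine cone $\widehat{X}=X\times\mathbf{A}^{1}$ with $G$-action $g\cdot(y,z)=(g\cdot y,\chi(g)z)$. A direct computation gives $k[\widehat{X}]^{G}=\bigoplus_{d\geq 0}k[X]_{\chi,d}\,z^{d}$, from which one deduces the clean reformulation: $x$ is $\chi$-semistable if and only if $\overline{G\cdot(x,1)}$ (closure taken in $\widehat{X}$) is disjoint from $X\times\{0\}$. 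Indeed, if $f\in k[X]_{\chi,d}$ with $d>0$ and $f(x)\neq 0$, then $F(y,z)=f(y)z^{d}$ is a $G$-invariant function equal to $f(x)\neq 0$ on $G\cdot(x,1)$, hence on its closure, yet vanishing on $X\times\{0\}$; conversely, if $x$ is $\chi$-unstable then every element of $k[\widehat{X}]^{G}$ takes the same value at $(x,1)$ and at $(x,0)$ (the parts of positive $z$-degree die), so by finite generation of the invariant ring (Haboush--Nagata) together with the fact that invariants separate disjoint closed $G$-invariant subvarieties, the orbit closures of $(x,1)$ and $(x,0)$ meet, placing a point of $\overline{G\cdot(x,1)}$ inside the closed invariant set $X\times\{0\}$.

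Granting this reformulation, suppose $x$ is $\chi$-unstable; then the unique closed $G$-orbit $O$ inside $\overline{G\cdot(x,1)}$ is contained in $X\times\{0\}$ (take any point of the intersection, pass to its orbit closure, which lies in $X\times\{0\}$ and contains a closed orbit, necessarily $O$). Now I invoke the Hilbert--Mumford limit lemma, in the form: for a reductive $G$ acting on an affine variety and a point $\widehat{x}$, there is a one parameter subgroup $\lambda$ with $\lim_{t\to 0}\lambda(t)\cdot\widehat{x}$ existing and lying in the unique closed orbit of $\overline{G\cdot\widehat{x}}$. This yields $\lambda$ with $\lim_{t\to 0}\lambda(t)\cdot(x,1)\in O\subseteq X\times\{0\}$. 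Since $\lambda(t)\cdot(x,1)=(\lambda(t)\cdot x,\,t^{\langle\chi,\lambda\rangle})$, existence of the limit forces $\lim_{t\to 0}\lambda(t)\cdot x$ to exist, and its lying over $\{0\}$ forces $t^{\langle\chi,\lambda\rangle}\to 0$, i.e. $\langle\chi,\lambda\rangle>0$ --- precisely the destabilizing one parameter subgroup, completing the contrapositive. The main obstacle is this last input: the existence of a destabilizing one parameter subgroup, i.e. the Hilbert--Mumford limit lemma (due to Hilbert for $\GL_{n}$, conjectured in general by Mumford, and proved by Kempf in \cite{MR506989}); the auxiliary facts (finite generation of invariants, and separation of disjoint closed invariant subvarieties by invariants) are standard affine GIT, and I would cite rather than reprove them.
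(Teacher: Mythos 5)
The paper does not actually prove \Cref{HilbMumAffine}; it is quoted as a classical result with references to \cite{MR1304906} and \cite{MR1315461}, so there is no in-paper argument to compare against. Your proof is the standard affine-GIT derivation and I find it correct. The forward direction is exactly right: $f(\lambda(t)\cdot x)=t^{-d\langle\chi,\lambda\rangle}f(x)$ extends to a regular function on $\mathbf{A}^{1}$, and a nonzero constant times $t^{-d\langle\chi,\lambda\rangle}$ is a polynomial only if $\langle\chi,\lambda\rangle\leq 0$. For the converse, your reformulation via $\widehat{X}=X\times\mathbf{A}^{1}$ with the $\chi$-twisted action is the usual device: the computation $k[\widehat{X}]^{G}=\bigoplus_{d\geq 0}k[X]_{\chi,d}z^{d}$ is correct, and the equivalence ``$x$ is $\chi$-semistable iff $\overline{G\cdot(x,1)}\cap(X\times\{0\})=\emptyset$'' is correctly deduced from finite generation of invariants and separation of disjoint closed invariant subvarieties (both of which do require reductivity, as you note via Haboush--Nagata, so the argument survives in positive characteristic). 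You have also correctly isolated the one genuinely deep input: the limit lemma asserting that the unique closed orbit in $\overline{G\cdot\widehat{x}}$ is reached by a one parameter subgroup limit, which is Kempf's theorem \cite{MR506989} in general (and Hilbert--Mumford--Birkes in special cases); given that, reading off the two coordinates of $\lim_{t\to 0}(\lambda(t)\cdot x,\,t^{\langle\chi,\lambda\rangle})$ lands exactly on the required destabilizing $\lambda$. The only stylistic caveat is that since the paper treats \Cref{theorem6} (Kempf) as a separate imported black box, a proof of \Cref{HilbMumAffine} written this way is not more elementary than simply citing it, but as a verification of the statement it is sound.
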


Next, we define a numerical measure for instabilities contributed by destabilizing one parameter subgroups. 
For a point $x\in X$, set $$C_{x}=\{\lambda\in\pmb{\Gamma}(G)|\lim_{t\rightarrow
  0}\lambda(t)\cdot x\text { exits}\}.$$
According to \Cref{HilbMumAffine}, $x\in X^{\text{us}}(\chi)$ if and only if there is a one parameter subgroup $\lambda$ such that 
\begin{enumerate}
    \item $\lambda\in C_{x}$, and 
    \item $\langle\chi,\lambda\rangle>0$.
\end{enumerate}

Therefore, for an $x\in X^{\text{us}}(\chi)$, it is natural to ask if there is a one parameter subgroup that contributes to the highest instability measured by the quantities $\langle\chi,\lambda\rangle$ among all $\lambda\in C_{x}$. An immediate problem is that $\langle\chi,\lambda^{N}\rangle=N\cdot\langle\chi,\lambda\rangle$ for any $N\in\mathbf{N}$. To get rid of the dependency on multiples of one parameter subgroups, we divide the function $\langle\chi,-\rangle:\pmb{\Gamma}(G)\rightarrow\mathbf{Z}$ by the norm $||-||$ on  $\pmb{\Gamma}(G)$.

For $x\in X^{\text{us}}(\chi)$, we set $$\mathcal{M}^{\chi}(x):=
  \sup_{\lambda\in C_{x}\backslash\{0\}}\frac{\langle\chi,\lambda\rangle
}{||\lambda||}.$$ 
\begin{definition}\label{worst_one_PS_def} 
We say a one parameter subgroup $\lambda $ is \emph{indivisible} if $\lambda$ is a primitive lattice in a (and hence in any)  maximal torus  containing $\lambda$. 
We say a one parameter subgroup $\lambda$ is $\chi$-\emph{adapted to }$x$ if
$\frac{\langle\chi,\lambda\rangle}{||\lambda||}= \mathcal{M}^{\chi}(x)$. We let $\Lambda^{\chi}(x)$ denote the set of indivisible one parameter subgroups that are $\chi$-adapted to $x$.
\end{definition}

We also recall that for any one parameter subgroup $\lambda$, there is the associated parabolic subgroup $$P(\lambda)=\{g\in G\mid \lim_{t\rightarrow 0}\lambda(t)g\lambda(t)^{-1}\text{ exists in }G\}$$ of $G$. An important property about $P(\lambda)$ we want to mention is that it preserves filtration in the following sense: Let $V$ be a representation of $G$ and let $\oplus_{n\in\mathbf{Z}}V^{(n)}$ be the weight decomposition of some one parameter subgroup $\lambda$ of $G$. Namely, $\lambda$ acts on $V^{(n)}$ with weight $n$. Suppose $p\in P(\lambda)$, we then have $$p\cdot V^{(\geq n)}=V^{(\geq n)}$$ where $V^{(\geq n)}=\oplus_{m\geq n}V^{(m)}$. 

We are now ready for Kempf's theorem from \cite{MR506989}:
\begin{theorem}(Kempf)\label{theorem6}
Let $G$ be a reductive group, acting on an affine variety $X$. Let $\chi$ be a character of $G$, and let 
$x\in X$ be a $\chi$-unstable point. Then
\begin{enumerate}
\item $\Lambda^{\chi}(x)$ is not empty;
\item For any $g\in G$, $\Lambda^{\chi}(g\cdot x)=g\Lambda^{\chi}(x) g^{-1}$, and $\mathcal{M}^{\chi}(g\cdot x)=\mathcal{M}^{\chi}(x)$;
\item There is a
  parabolic subgroup $P(\chi,x)$ of $G$ such that for all $\lambda\in \Lambda^{\chi}(x)$, $P(\lambda)= P(\chi,x)$, and such that any two elements of $\Lambda^{\chi}(x)$ are conjugate to each other by an element in $P(\chi,x)$. 
\end{enumerate}
\end{theorem}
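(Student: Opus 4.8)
The plan is to follow Kempf's original two-stage strategy from \cite{MR506989}: first solve the optimization problem after restricting the action to a single maximal torus, where it becomes a finite-dimensional convex program with a unique solution, and then globalize across all maximal tori using the structure theory of parabolic subgroups. Part (2) is essentially formal, so I would dispatch it first: since the norm is conjugation-invariant, $\langle\chi,\lambda\rangle$ depends only on $\lambda$ (indeed $\langle\chi,-\rangle$ is itself conjugation-invariant, as $\chi$ is a character and $k^{\times}$ is abelian), and $C_{g\cdot x}=gC_{x}g^{-1}$, the function $\lambda\mapsto\langle\chi,\lambda\rangle/||\lambda||$ is unchanged under $\lambda\mapsto g\lambda g^{-1}$; hence $\mathcal{M}^{\chi}(g\cdot x)=\mathcal{M}^{\chi}(x)$ and $\Lambda^{\chi}(g\cdot x)=g\Lambda^{\chi}(x)g^{-1}$. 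The substance is in (1) and (3).

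For the torus stage, fix a maximal torus $T$ and choose a finite $T$-stable generating set for $k[X]$ (equivalently, a $G$-equivariant embedding of $X$ into a $G$-module). One checks that for $\lambda\in\pmb{\Gamma}(T)$ the limit $\lim_{t\to 0}\lambda(t)\cdot x$ exists exactly when $\langle m,\lambda\rangle\ge 0$ for every weight $m$ in the resulting finite \emph{state} of $x$. Thus $C_{x}\cap(\pmb{\Gamma}(T)\otimes\mathbf{R})$ is a rational polyhedral cone, $\langle\chi,-\rangle$ is linear on it, and by property (2) of the norm $||-||^{2}$ restricts to a strictly convex rational quadratic form. Since $\langle\chi,\lambda\rangle/||\lambda||$ is invariant under positive rescaling of $\lambda$, maximizing it over this cone is the same as minimizing $||\lambda||$ over the rational polytope $\{\lambda\in C_{x}\cap(\pmb{\Gamma}(T)\otimes\mathbf{R}):\langle\chi,\lambda\rangle=1\}$, which is nonempty precisely for those $T$ containing a destabilizing one parameter subgroup (such $T$ exist by \Cref{HilbMumAffine}, as $x$ is unstable). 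Strict convexity yields a unique minimizer, necessarily rational since the data are rational; rescaling to the primitive integral point of its ray gives a unique indivisible optimal one parameter subgroup $\lambda_{T}$ of $T$, of value $\mathcal{M}_{T}:=\langle\chi,\lambda_{T}\rangle/||\lambda_{T}||$.

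To globalize, note $\mathcal{M}^{\chi}(x)=\sup_{T}\mathcal{M}_{T}$ over all maximal tori, since every one parameter subgroup lies in one and the ratio is intrinsic; conjugating all norm-one one parameter subgroups into a fixed maximal torus shows this supremum is finite. The crux is that it is \emph{attained} and that the parabolic attached to any maximizer is the same. For this I would invoke Kempf's convexity/monotonicity lemma: if $\lambda$ is optimal for its torus $T$, then for every maximal torus $T'\subseteq P(\lambda)$ one has $\mathcal{M}_{T'}\ge\mathcal{M}_{T}$ and $P(\lambda_{T'})\supseteq P(\lambda)$, with equality in the first inequality forcing $\lambda_{T'}$ to be $P(\lambda)$-conjugate to $\lambda$; combined with a finiteness argument over the possible states, this shows that a torus-optimal $\lambda$ whose value cannot be strictly increased by such moves is globally optimal and that $P(\lambda)$ is the required parabolic $P(\chi,x)$. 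Granting this, (3) follows: if $\lambda_{1},\lambda_{2}\in\Lambda^{\chi}(x)$ then $P(\lambda_{1})=P(\lambda_{2})=P(\chi,x)$, each $\lambda_{i}$ is optimal for a maximal torus inside $P(\chi,x)$, and since all maximal tori of $P(\chi,x)$ are $P(\chi,x)$-conjugate, the torus-stage uniqueness forces $\lambda_{1}$ and $\lambda_{2}$ to be conjugate by an element of $P(\chi,x)$; and (1) is the existence just established.

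The main obstacle is exactly this globalization: proving the convexity/monotonicity lemma comparing optimal one parameter subgroups in different tori, and deducing that the supremum over all tori is achieved. This is the step that resolved Mumford's conjecture, and it rests on genuine structure theory — the interaction of the limits $\lim_{t\to 0}\lambda(t)\cdot x$ with the weight filtrations preserved by $P(\lambda)$ (recalled before \Cref{theorem6}) and the geometry of the length function on $\pmb{\Gamma}(G)$, equivalently the metric geometry of the spherical building of $G$. I would not reprove this from scratch: in the paper I would present the formal reductions and the torus stage, and cite \cite{MR506989} (or a building-theoretic reformulation) for the hard comparison step.
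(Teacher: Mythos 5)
The paper does not prove this theorem at all: it is imported verbatim from Kempf's paper \cite{MR506989} and used as a black box, so there is no ``paper's own proof'' to compare against. Your proposal is an accurate roadmap of Kempf's original argument, and you correctly isolate where the real work lies; but note that your write-up, like the paper, ultimately defers that work to the citation rather than supplying it.

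On the merits of the sketch itself: part (2) is indeed formal and your argument for it is complete ($\langle\chi,-\rangle$ is conjugation-invariant because $k^{\times}$ is abelian, $C_{g\cdot x}=gC_{x}g^{-1}$, and the norm is conjugation-invariant by hypothesis). The torus stage is also essentially right, with one small imprecision: the slice $\{\lambda\in C_{x}\cap\pmb{\Gamma}(T)_{\mathbf{R}}:\langle\chi,\lambda\rangle=1\}$ is a rational polyhedron but need not be bounded, so calling it a polytope is inaccurate; the argument survives because the norm is coercive, so the strictly convex minimization still has a unique (rational) solution on a nonempty closed convex set. The genuine content of (1) and (3) --- that the supremum over all maximal tori is attained, that all optimal indivisible one parameter subgroups share the same parabolic $P(\chi,x)$, and the monotonicity comparison between optimal one parameter subgroups of tori contained in $P(\lambda)$ --- is precisely Mumford's conjecture as resolved by Kempf, and you state it as a lemma to be invoked rather than proved. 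That is a legitimate choice (and matches the paper's treatment exactly), but it means your proposal is an annotated citation rather than a proof: if a self-contained argument were required, the convexity/monotonicity lemma and the attainment of the supremum would be the missing pieces, and they do rest on the interaction of $P(\lambda)$ with weight filtrations and on the metric geometry of the norm on $\pmb{\Gamma}(G)$, as you say.
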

\subsection{The quiver setting}\label{the_quiver_setting}
We now unpack what these machineries mean in the case of representation of quivers. For starters, let us fix an acyclic quiver $Q$, and a dimension vector $\mathbf{d}\in\mathbf{Z}^{Q_{0}}$. By $\GL(n)$ we mean the linear algebraic group of invertible $n\times n$ matrices with entries in $k$. Let the linear algebraic group $$\GL(\mathbf{d}):=\prod_{v\in Q_0}\GL(\mathbf{d}_v)$$ act on the $k$-vector space $$R(Q,\mathbf{d}):=\prod_{a\in Q_1}\Hom_k(k^{\mathbf{d}_{ta}},k^{\mathbf{d}_{ha}})$$ via the following law: $$(g\cdot\phi)_a=g_{ha}\circ \phi_{a}\circ(g_{ta})^{-1}\text{ for any }g\in G, \phi\in R(Q,\mathbf{d}).$$ In this way, the $\GL(\mathbf{d})$-orbits in $R(Q,\mathbf{d})$ correspond to the isomorphism classes of representations of $Q$ of dimension $\mathbf{d}$. A weight $\theta:\mathbf{Z}^{Q_0}\rightarrow \mathbf{Z}$ defines a character $\chi_{\theta}:\GL(\mathbf{d})\rightarrow k^{\times}$ by $$\chi_{\theta}(g)=\prod_{v\in Q_0}\det(g_v)^{\theta(v)}\text{ for any }g\in \GL(\mathbf{d}).$$ We fix a weight $\theta$ with $\theta(\mathbf{d})=0$.

Let $M$ be a representation of $Q$ of dimension $\mathbf{d}$. Since $\chi_{\theta}$-stability is constant on $\GL(\mathbf{d})$-orbits, different choices of bases for each $M_v$ does not affect the $\chi_{\theta}$-stability of the corresponding elements of $M$ in $R(Q,\mathbf{d})$. From now on we fix an identification $M_v\simeq\mathbf{R}^{\mathbf{d}_v}$ for each $v$, and still write the identification of $M$ in $R(Q,\mathbf{d})$ as $M.$

We now recall the following relation between weighted filtrations of $M$, and the one parameter subgroups having limits at $M$, namely, the one parameter subgrous in $C_M$. Any one parameter subgroup $\lambda\in C_M$ induces a weighted filtration 
$$0=M_0\subsetneq M_1\subsetneq M_2\subsetneq\cdots \subsetneq M_s=M,$$ where $\lambda$ acts on each quotient $M_i/M_{i-1}$ with weight $\Gamma_i$. Moreover, the weights enjoy the following property:
 $\Gamma_1>\Gamma_2>\cdots>\Gamma_s$ (see \cite{MR1315461}). 

Conversely, given a weighted filtration $0=M_0\subsetneq M_1\subsetneq\cdots\subsetneq M_s=M$ with weights $\Gamma_1>\cdots>\Gamma_s$, we may choose new bases for $M$ at all vertices that are compatible with the filtration. Namely, choose a basis for $M_1$, then extends the basis to $M_2,M_3$, and so on. We then let $g_v\in\GL(\mathbf{d}_v)$ be the one parameter subgroup that acts on $(M_{i}/M_{i-1})_{v}$ by weight $\Gamma_i$. In this way, the one parameter subgroup $\lambda=\prod_{v\in Q_0}g_v$ of $\GL(\mathbf{d})$ induces the original filtration. Moreover, since $\theta(M)=0$, we have  \begin{equation}\label{hilbert-mum-index-original}\langle\chi_{\theta},\lambda\rangle=\sum_{i=1}^{s}\Gamma_i\theta(M_{i}/M_{i-1})=\sum_{i=1}^{s-1}(\Gamma_i-\Gamma_{i+1})\theta(M_i).\end{equation} 

In particular, for any proper subrepresentation $M'$ of $M$, one may form the filtration $0\subsetneq M' \subsetneq M$, and assign weights $\Gamma_1,\Gamma_2$ to $M'$ and $M$ respectively, where $\Gamma_1$ is any integer and $\Gamma_2=\Gamma_1-1$.  It then follows from the above discussion and \cref{hilbert-mum-index-original} that there is a one parameter subgroup $\lambda$ with $\langle\chi_{\theta},\lambda\rangle=\theta(M')$. In view of \Cref{HilbMumAffine}, we see that $M$ is $\chi_{\theta}$-semistable if and only if $\theta(M')\leq 0$ for any subrepresentation $M'$ of $M$. This was the definition, \Cref{weight_stability_def} we used for weight stability.

\subsection{From the Harder-Narasimhan filtration to maximally destabilizing one parameter subgroups}\label{the_complete_proof_of_thmB}
In this section, we address how to derive a maximally destabilizing one parameter subgroup of an unstable representation $M$ of $Q$ from its Harder-Narasimhan filtration. We need to set up appropriate weights and slope.

Let $\Theta,\kappa:\mathbf{Z}^{Q_0}\rightarrow\mathbf{Z}$ be two weights, and let $\mu=\Theta/\kappa$ be the associated slope function. Suppose $\pmb{\dim}M=\mathbf{d}.$ Recall that we defined the weight $\theta_{\mathbf{d}}$ where 
$$\theta_{\mathbf{d}}(N):=\kappa(\mathbf{d})\Theta(N)-\Theta(\mathbf{d})\kappa(N)\text{ for any representation }N.$$ With  these we have that $\theta_{\mathbf{d}}(M)=0$, and that $M$ is $\mu$-semistable if and only if it is $\chi_{\theta_{\mathbf{d}}}$-semistable (\Cref{connecting_slope_weight_stability}).

We now define a norm $||-||$ on $\pmb{\Gamma}(\GL(\mathbf{d}))$. This norm will depend on the weight $\kappa$. We first build the norm on the diagonal maximal torus $T\subset \GL(\mathbf{d})$. There is a natural identification $$\pmb{\Gamma}(T)_{\mathbf{R}}\simeq \bigoplus_{v\in Q_0}\mathbf{R}^{\mathbf{d}_v}.$$ We weight the standard norm on $\mathbf{R}^{\mathbf{d}_v}$ by $\kappa(v)$ for each $v$. More explicitly, if $\lambda\in \pmb{\Gamma}(T)$ with $\lambda_v=(\lambda_{1,v},\ldots,\lambda_{\mathbf{d}_v,v})$, we set  \begin{equation}\label{weighted_norm}||\lambda||=\big(\sum_{v\in Q_0}\sum_{i=1}^{\mathbf{d}_v}\kappa(v)(\lambda_{i,v})^{2}\big)^{1/2}.\end{equation} This norm $||-||$ extends to the entire set $\Gamma(\GL(\mathbf{d}))$ and is invariant under conjugation. We refer the readers to page 58 in \cite{MR1304906} for a detailed discussion on the construction of norms on the set of one parameter subgroup of a reductive group. 

Therefore, whenever $\lambda\in C_M$ (that is, $\lim_{t\rightarrow 0}\lambda(t)\cdot M$ exists), its norm can be expressed in terms of its weights and the filtration it induces: If $0=M_0\subsetneq M_1\subsetneq\cdots\subsetneq M_s=M$ is the filtration induced by $\lambda$ with weights $\Gamma_1>\cdots>\Gamma _s$, then \begin{equation}\label{weighted_norm_grouped_by_filtration}||\lambda||=\big(\sum_{i=1}^{s}\sum_{v\in Q_0}\kappa(v)\Gamma_i^2\dim (M_{i}/M_{i-1})_v\big)^{1/2}=\big(\sum_{i=1}^{s}\Gamma_i^2\kappa(M_{i}/M_{i-1})\big)^{1/2}.\end{equation}
We adopt the above norm to define (using \Cref{worst_one_PS_def}) one parameter subgroups that are $\chi_{\theta_{\mathbf{d}}}$-adapted to $M$.

Since all one parameter subgroups that are $\chi_{\theta_{\mathbf{d}}}$-adapted to $M$ are in a full conjugacy class of their parabolic subgroup (\Cref{theorem6}), the filtrations induced by those one parameter subgroups are the same. In \cite{MR3199484}, such a filtration is named as the Kempf filtration of $M$. By Theorem 5.3 from \cite{MR3199484}, it is also the Harder-Narasimhan filtration for $M$ with respect to the slope $\mu$. In addition, there is the following recipe for reverse engineering a maximally destabilizing one parameter subgroup from the Harder-Narasimhan filtration. 

\begin{theorem}[Theorem 4.1, Lemma 5.2 \cite{MR3199484}]\label{one_PS_via_Kempf's_filtration}
Let $\Theta,\kappa:\mathbf{Z}^{Q_0}\rightarrow\mathbf{Z}$ be two weights and let $\mu=\Theta/\kappa$ be the slope.  Let $M$ be a $\mu$-unstable representation of dimension $\mathbf{d}$, and let $$0=M_0\subsetneq M_1\subsetneq \cdots\subsetneq M_r=M$$ be the Harder-Narasimhan filtration. Set $$u_i=\kappa(M)\mu(M_{i}/M_{i-1})-\Theta(M)\text{ for }i=1,\ldots,r.$$ Choose a basis for $M_{v}$ for each vertex $v$ compatible with the filtration, and let $T\subset \GL(\mathbf{d})$ be the maximal torus diagonal with respect to these bases. Define $\tilde{\lambda}\in\pmb{\Gamma}(T)_{\mathbf{Q}}\simeq\oplus_{v\in Q_0}\mathbf{Q}^{\mathbf{d}_v}$ where for each $i$ and $v$, the entries of $\tilde{\lambda}_v$ that correspond to $(M_i/M_{i-1})_v$ are $u_i$. Then the lattice points on the ray $\mathbf{Q}_{>0}\cdot \tilde{\lambda}$ are the one parameter subgroups in $T$ that are $\chi_{\theta_{\mathbf{d}}}$-adapted to $M$. Moreover, $$\mathcal{M}^{\chi_{\theta_{\mathbf{d}}}}(M)=\big(\sum_{i=1}^r(\kappa(M)\mu(M_{i}/M_{i-1})-\Theta(M))^2\kappa(M_i/M_{i-1})\big)^{1/2} .$$ 
\end{theorem}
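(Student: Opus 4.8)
\emph{Plan.} This statement is, in essence, Theorem 4.1 together with Lemma 5.2 of \cite{MR3199484}; the plan is to derive it from the dictionary between one parameter subgroups in $C_M$ and weighted filtrations of $M$ set up above, together with Kempf's \Cref{theorem6}. First I would convert the computation of $\mathcal{M}^{\chi_{\theta_{\mathbf{d}}}}(M)$ into an optimization over weighted filtrations: for $\lambda\in C_M$ inducing $0=N_0\subsetneq N_1\subsetneq\cdots\subsetneq N_s=M$ with weights $\Gamma_1>\cdots>\Gamma_s$, equations \cref{hilbert-mum-index-original} and \cref{weighted_norm_grouped_by_filtration} (using $\theta_{\mathbf{d}}(M)=0$) express
$$\frac{\langle\chi_{\theta_{\mathbf{d}}},\lambda\rangle}{\|\lambda\|}=\frac{\sum_{i=1}^{s}\Gamma_i\,\theta_{\mathbf{d}}(N_i/N_{i-1})}{\bigl(\sum_{i=1}^{s}\Gamma_i^{2}\,\kappa(N_i/N_{i-1})\bigr)^{1/2}},$$
a number depending only on the weighted filtration, not on how $\lambda$ sits inside $\GL(\mathbf{d})$. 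Next I would invoke \Cref{theorem6}: a $\chi_{\theta_{\mathbf{d}}}$-adapted one parameter subgroup exists, and all of them are conjugate under a single parabolic $P=P(\chi_{\theta_{\mathbf{d}}},M)$; since $P(\lambda)$ preserves the filtration attached to $\lambda$, all $\chi_{\theta_{\mathbf{d}}}$-adapted one parameter subgroups induce one and the same filtration, which by Theorem 5.3 of \cite{MR3199484} is the Harder--Narasimhan filtration $M_\bullet$ of $M$.

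With that in hand I would fix a basis of each $M_v$ compatible with $M_\bullet$, let $T$ be the diagonal maximal torus, and observe that a one parameter subgroup of $T$ induces $M_\bullet$ exactly when its weight is a constant $\Gamma_i$ on the block of $(M_i/M_{i-1})_v$ for every $v$, with $\Gamma_1>\cdots>\Gamma_r$. For such a $\lambda$ the displayed ratio reads $\langle\Gamma,w\rangle_\kappa/\|\Gamma\|_\kappa$, where $w_i=\theta_{\mathbf{d}}(M_i/M_{i-1})/\kappa(M_i/M_{i-1})$ and $\langle-,-\rangle_\kappa$ is the inner product on $\mathbf{R}^{r}$ weighted by the positive numbers $\kappa(M_i/M_{i-1})$; Cauchy--Schwarz bounds this by $\|w\|_\kappa$, with equality iff $\Gamma$ is a positive multiple of $w$. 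Applying \Cref{connecting_slope_weight_stability} with $\mathbf{d}=\pmb{\dim}M$ identifies $w_i=\mu_{\mathbf{d}}(M_i/M_{i-1})=\kappa(M)\mu(M_i/M_{i-1})-\Theta(M)=u_i$, and since the slopes $\mu(M_i/M_{i-1})$ strictly decrease by \Cref{HN} and $\kappa(M)>0$, the vector $(u_1,\dots,u_r)$ is strictly decreasing and nonzero (as $M$ is unstable), so any integral positive rational multiple of $\tilde\lambda$ is an honest one parameter subgroup of $T\cap C_M$. Combining the two paragraphs yields the statement: an adapted $\lambda\in\pmb{\Gamma}(T)$ must induce $M_\bullet$, hence has weight vector proportional to $(u_i)$ and lies on $\mathbf{Q}_{>0}\cdot\tilde\lambda$; conversely every lattice point of $\mathbf{Q}_{>0}\cdot\tilde\lambda$ attains the value $\|w\|_\kappa$, and this value equals $\mathcal{M}^{\chi_{\theta_{\mathbf{d}}}}(M)$ because every adapted one parameter subgroup induces $M_\bullet$ and so has ratio at most $\|w\|_\kappa$. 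Reading off $\|w\|_\kappa=\bigl(\sum_{i=1}^{r}u_i^{2}\,\kappa(M_i/M_{i-1})\bigr)^{1/2}$ and substituting $u_i=\kappa(M)\mu(M_i/M_{i-1})-\Theta(M)$ gives the asserted formula for $\mathcal{M}^{\chi_{\theta_{\mathbf{d}}}}(M)$.

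The genuinely substantive input is Theorem 5.3 of \cite{MR3199484}, identifying the filtration underlying the adapted one parameter subgroups with the Harder--Narasimhan filtration (its proof rests on a convexity argument for Harder--Narasimhan polygons); everything else is bookkeeping with the norm \cref{weighted_norm} and Cauchy--Schwarz. The one point on our side that uses structure rather than formalities is the feasibility check that the Cauchy--Schwarz optimum $(u_i)$ is strictly decreasing, which is precisely the defining inequality $\mu(M_i/M_{i-1})>\mu(M_{i+1}/M_i)$ of the Harder--Narasimhan filtration; were this to fail, the adapted one parameter subgroups would detect a coarsening of $M_\bullet$ rather than $M_\bullet$ itself. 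I expect this feasibility step, and the care needed to state precisely that \emph{all} adapted one parameter subgroups (not merely the indivisible ones) share the filtration $M_\bullet$, to be the only places requiring attention.
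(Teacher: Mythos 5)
Your proposal is correct and follows essentially the same route as the paper's own account in the subsection ``Maximizing Kempf function'': both reduce to one parameter subgroups inducing the Kempf filtration (identified with the Harder--Narasimhan filtration via Theorem 5.3 of \cite{MR3199484}), rewrite the Hilbert--Mumford ratio as a Kempf function for the $\kappa$-weighted inner product on $\mathbf{R}^r$, and apply Cauchy--Schwarz together with the feasibility check that $(u_i)$ is strictly decreasing. Your additional care about why \emph{all} adapted one parameter subgroups share the filtration (parabolic conjugacy plus invariance under positive multiples) is a detail the paper leaves implicit but is handled correctly.
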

\begin{remark}\label{one_PS_via_Kempf's_filtration_simpli_version}
In the case that $\Theta(M)=0$ in the first place, one can take $$u_i=\mu(M_i/M_{i-1}),$$ and \begin{equation*}\begin{split}
\mathcal{M}^{\chi_{\Theta}}(M)&=\big(\sum_{i}\mu(M_{i}/M_{i-1})^2\kappa(M_{i}/M_{i-1})\big)^{1/2}\\
&=\big(\sum_{i}\Theta(M_{i}/M_{i-1})\mu(M_{i}/M_{i-1})\big)^{1/2}.\end{split}\end{equation*}\end{remark}

Therefore, knowing the Harder-Narasimhan filtration of an unstable representation is equivalent to knowing its  maximally destabilizing one parameter subgroups. A full treatment of \Cref{Effective_alg_intro} is now complete. An account of the correctness of the construction in \Cref{one_PS_via_Kempf's_filtration} is given in the next section for interested readers.
\subsection{Maximizing Kempf function}\label{maximizing_Kempf_function}
Let $M$ be a $\mu$-unstable representation of $Q$ of dimension $\mathbf{d}$ with the Harder-Narasimhan filtration 
$$0=M_0\subsetneq M_1\subsetneq\cdots\subsetneq M_r=M.$$ By Theorem 5.3 from \cite{MR3199484}, the filtration is the same as the Kempf filtration. Recall that $C_M$ denotes the set of one parameter subgroups having limits at $M$. Let $$f:C_M\backslash\{\mathbf{0}\}\rightarrow\mathbf{R}$$ be the function defined by $$f(\lambda)=\frac{\langle\chi_{\theta_{\mathbf{d}},\lambda}\rangle}{||\lambda||}.$$ To maximize this function, we can restrict the search range to the set of one parameter subgroups that induce the Kempf filtration, which is also the Harder-Narasimhan filtration. 

Consider the inner product $(-,-):\mathbf{R}^{r}\times\mathbf{R}^{r}\rightarrow\mathbf{R}$ defined by the matrix 
$$
\begin{pmatrix}
\kappa(M_1)&&&0\\
&\kappa(M_2/M_1)&&\\
&&\ddots&\\
0&&&\kappa(M/M_{r-1})
\end{pmatrix}.
$$
If $\lambda$ is a one parameter subgroup that induces the Harder-Narasimhan filtration of $M$ with the weight vector $\Gamma=(\Gamma_1,\ldots,\Gamma_s)$, then 
upon using $\theta_{\mathbf{d}}$ as $\theta$ in equation (\ref{hilbert-mum-index-original}),  we derive that 
\begin{equation}\label{hilbert-mumford-index-weighted}
\begin{split}
\langle\chi_{\theta_{\mathbf{d}}},\lambda\rangle&=\sum_{i}\Gamma_{i}\big(\kappa(M)\Theta(M_i/M_{i-1})-\Theta(M)\kappa(M_i/M_{i-1})\big)\\
&=\sum_i\Gamma_i\kappa(M_i/M_{i-1})\big(\kappa(M)\mu(M_i/M_{i-1})-\Theta(M)\big)\\
&=(\Gamma,u),
\end{split}
\end{equation}
where $u_i=\kappa(M)\mu(M_i/M_{i-1})-\Theta(M)$. Moreover, \cref{weighted_norm_grouped_by_filtration}  implies $||\lambda||=(\Gamma,\Gamma)^{1/2}$. 

Letting $u=(u_1,\ldots,u_r)\in\mathbf{R}^{r}$, we define the function $$g_{u}:\mathbf{R}^{r}\backslash\{\mathbf{0\}}\rightarrow\mathbf{R}$$ where $$g_u(x)=\frac{(x,u)}{\sqrt{(x,x)}}.$$ This function is called the Kempf function in \cite{MR3199484}. We see that maximizing  $f$ among all one parameter subgroups inducing the Kempf filtration is equivalent to maximizing the Kempf function $g_u$ on the open cone $C=\{x\in\mathbf{R}^{r}\mid x_1>x_2>\cdots>x_r\}.$ By the Cauchy-Schwarz inequality, $g_u$ attains global maximum at $u$. Since $\mu(M_1)>\mu(M_2/M_1)>\cdots>\mu(M/M_r)$, $u$ is already in the cone $C$. Therefore, the one parameter subgroup constructed in \Cref{one_PS_via_Kempf's_filtration} maximizes $f$.

\section{A short example}\label{a_short_example}
In this example we construct an unstable representation of a bipartite quiver and compute the following:
\begin{enumerate}
\item Several witnesses to the discrepancy;
\item The Harder-Narasimhan filtration;
\item A maximally destabilizing one parameter subgroup.
\end{enumerate}
We compare the results of (1), (2) with \Cref{HN_contains_witness_discrepancy}, and demonstrate that not all optimal subrepresentations occur in the Harder-Narasimhan filtration. For (3), we present two approaches. The first approach applies \Cref{one_PS_via_Kempf's_filtration} to the Harder-Narasimhan filtration obtained in (2), and the second is a sketch of the brute force procedure that can be applied to any representation of any reductive group. By doing so we hope to give readers a sense of the hardeness of finding Kempf's one parameter subgroups in general. The set up of this example is motivated by \cite{MR4238986}. 
\subsubsection{Set up}
Let us consider the following representation $M$ of the following bipartite quiver over the field $\mathbf{C}$ of complex numbers:
$$\begin{tikzpicture}
\node at (-0.3,0.8) {$\mathbf{C}$};
\node at (-0.3,0) {$\mathbf{C}$};
\node at (-0.3,-0.8) {$\mathbf{C}$};
\node at (-0.3,-1.6) {$\mathbf{C}$};
\node at (3.2,-0.4) {$\mathbf{C}^4$.};
\fill (0,0.8) circle (2pt);
\fill (2.8,-0.4) circle (2pt);
\fill (0,0) circle (2pt);
\fill (0,-0.8) circle (2pt);
\fill (0,-1.6) circle (2pt);
\draw [->] (0,0.8) to [bend left] (2.8,-0.25);
\draw [->] (0,0) to [bend left=15] (2.7,-0.3);
\draw [->] (0,-0.8) to [bend right = 15] (2.7,-0.5);
\draw [->] (0,-1.6) to [bend right] (2.8,-0.55);
\node at (1.9,0.75) {$a_1$};
\node at (1.9,0.15) {$a_2$};
\node at (1.9,-0.95) {$a_3$};
\node at (1.9,-1.55) {$a_4$};
\end{tikzpicture}$$
Label the four vertices on the left by $x_1,x_2,x_3,x_4$ from top to bottom. Label the single vertex on the right as $y$ so that $M_{x_i}=\mathbf{C}$ for each $i=1,\ldots ,4$, and $M_{y}=\mathbf{C}^4$. 

We set $M(a_1)(1)=e_{2}, M(a_2)(1)=e_{1}, M(a_{3})(1)=2e_{1}$, and  $M(a_{4})(1)=e_{3}$ in $\mathbf{C}^{4}$, where $e_1,\ldots,e_4$ form the standard basis of $\mathbf{C}^4$. We let $\Theta$ and $\kappa$ be two weights where $\Theta(x_{i}) = 4$, $\Theta(y)=-4$, and $\kappa(x_i)=\kappa(y) = 1$ for each $i$. It then follows that $\Theta(M) = 0$. We also let $\mu(-) = \Theta(-)/\kappa(-)$ be the corresponding slope. 

We let $\Sp(e_1)$ be the largest subrepresentation $M'$ of $M$ such that $M'_y=\Span(e_1)$. Specifically,  $$\Sp(e_1)_{x_{i}}=\begin{cases}
\mathbf{C}\text{ if }i=2,3,\\
0\text{ if i=1,4},
\end{cases}$$
and $$\Sp(e_1)_y=\Span(e_1).$$ We define $\Sp(e_1,e_2),\Sp(e_1,e_3),$ and $\Sp(e_1,e_2,e_3)$ similarly. 
\subsubsection{Witnesses to the discrepancy}
We verify that the four subrepresentations $\Sp(e_{1})$, $\Sp(e_{1},e_{2})$, $\Sp(e_{1},e_{3})$, $\Sp(e_{1},e_{2},e_{3})$ are $\Theta$-optimal in $M$ with $\Theta$ value 4.   
For this, note that for any subrepresentation $M^{\prime}$ of $M$, $$\Theta(M^{\prime}) \leq 4\cdot \big(\# \{i\mid M'_{x_i}=\mathbf{C}\}-\dim(\Span(\{M(a_i)(1) \mid M'_{x_i}=\mathbf{C}\}))\big).$$ Therefore, for $\Theta(M^{\prime})$ to be positive, $M'_{x_2}$ and $M'_{x_3}$ must be equal to $\mathbf{C}$. The only proper subrepresentations of $M$ that are nonzero at $x_2,x_3$ with positive $\Theta$ values are exactly the four subrepresentations listed above. Since they all have the same $\Theta$ value 4, they are also $\Theta$-optimal and $\disc(M,\Theta)=4$. 
\subsubsection{The Harder-Narasimhan filtration}
We verify that 
$$0\subsetneq \Sp(e_1)\subsetneq \Sp(e_1,e_2,e_3)\subsetneq M$$ is the Harder-Narasimhan filtration of $M$. 
For this, it is clear that $\Sp(e_1)$ is the only subrepresentation with the highest slope $4/3$. We may infer that $\Sp(e_1)$ is the scss subrepresentation of $M$. Moving on to $M/\Sp(e_1)$, since $\Sp(e_1)$ is $\Theta$-optimal,  we can infer $\mu(M'/\Sp(e_{1}))\leq 0$ for any subrepresentation $M'$ containing $\Sp(e_1)$. Since  $$\mu(\Sp(e_1,e_2)/\Sp(e_1))=\mu(\Sp(e_1,e_3)/\Sp(e_1))=\mu(\Sp(e_1,e_2,e_3)/\Sp(e_1))=0,$$ we infer that the quotient $\Sp(e_1,e_2,e_3)/\Sp(e_1)$ is the scss subrepresentation of $M/\Sp(e_1)$. The quotient $M/\Sp(e_1,e_2,e_3)$ is one dimensional on $y$ and zero elsewhere. We conclude that $M/\Sp(e_1,e_2,e_3)$ is $\mu$-semistable and that the filtration written above is the Harder-Narasimhan filtration of $M$. 

Here we note that the two $\Theta$-optimal subrepresentations $\Sp(e_1,e_2)$ and $\Sp(e_1,e_3)$ are not included in the filtration.  We see in this example that not all optimal subrepresentations occur in the Harder-Narasimhan filtration. Also note that $\Sp(e_1)$ has the highest slope among all $\Theta$-optimal subrepresentations. This agrees with the implications of \Cref{HN_contains_witness_discrepancy} because $\mu(\Sp(e_1))=4/3>0=\mu(\Sp(e_1,e_2,e_3)/\Sp(e_1))$. Namely, $\Sp(e_1)$ serves as $M_l$ in the description of  \Cref{HN_contains_witness_discrepancy}.

\subsubsection{Maximal destabilizing one parameter subgroups}
We now demonstrate a maximally destabilizing one parameter subgroup for $M$ with respect to the weight $\Theta$ and the norm weighted by $\kappa$. The first thing to do is to reproduce the GIT set up introduced in \Cref{the_quiver_setting}. To begin with, we have the vector space $\prod_{a\in Q_1}\Hom_\mathbf{C}(\mathbf{C},\mathbf{C}^4)\simeq \mathbf{C}^{4\times 4}$. Each column of a matrix in $\mathbf{C}^{4\times 4}$ defines a map from a left vertex to the right vertex. We let the first column define a map from $x_1$ to $y$, and so on. In this way, the group $G=(\mathbf{C}^{\times})^{4}\times \GL(4)$ acts on $\mathbf{C}^{4\times 4}$ via the rule
$$(t_1,t_2,t_3,t_4,A)\cdot U=A\cdot U\cdot \begin{pmatrix}
t_1^{-1}&&&\\
&t_2^{-1}&&\\
&&t_3^{-1}&\\
&&&t_4^{-1}
\end{pmatrix}$$ for all $(t_1,t_2,t_3,t_4,A)\in G$ and for all $U\in\mathbf{C}^{4\times 4}$. There is the diagonal torus $D\simeq (\mathbf{C}^{\times})^{4}$ in $\GL(4)$. Let $T=(\mathbf{C}^{\times})^{4}\times D\subset G$ be the maximal torus in $G$. We then have $\pmb{\Gamma}(T)_{\mathbf{Q}}\simeq\mathbf{Q}^{8}$. With the notations introduced in \Cref{one_PS_via_Kempf's_filtration}, we have $$u_1=\frac{4}{3},u_2= 0, u_3=-4$$ by \Cref{one_PS_via_Kempf's_filtration_simpli_version}. The maximally destabilizing one parameter subgroups in $T$ is then on the positive  ray of the point $$(\underbrace{0,\frac{4}{3},\frac{4}{3},0}_{x_1,x_2,x_3,x_4},\underbrace{\frac{4}{3},0,0,-4}_{y}).$$

We now sketch a procedure using linear programming. Here we do not assume we know the Harder-Narasimhan filtration in the first place. Recall that $\Theta$ defines the character $\chi_{\Theta}$ of $G$ where $$\chi_{\Theta}(t_1,t_2,t_3,t_4,A)=(t_1t_2t_3t_4)^4\cdot \det(A)^{-4} $$ for all $(t_1,t_2,t_3,t_4,A)\in G$. The representation $M$ corresponds to the matrix $\begin{pmatrix}
0&1&2&0\\
1&0&0&0\\
0&0&0&1\\
0&0&0&0
\end{pmatrix}.$ Let us continue to use $M$ to represent the matrix.

If $\lambda(t)=(t^{a_1},\ldots, t^{a_8})$ is a one parameter subgroup in $T$, then $$\lambda(t)\cdot M=
\begin{pmatrix}
0&t^{-a_2+a_5}&2\cdot t^{-a_3+a_5}&0\\
t^{-a_1+a_6}&0&0&0\\
0&0&0&t^{-a_4+a_7}\\
0&0&0&0
\end{pmatrix}.$$ We see that $\lim_{t\rightarrow\infty}\lambda(t)\cdot M$ exists if and only if 
\begin{equation}\label{constraints_lp_short_example}\begin{cases}
-a_1+a_6\geq 0\\
-a_2+a_5\geq 0\\
-a_3+a_5 \geq 0\\
-a_4+a_7\geq 0\end{cases}.\end{equation}
One also easily calculates that $$\langle\chi_{\Theta},\lambda\rangle= 4(a_1+a_2+a_3+a_4)-4(a_5+a_6+a_7+a_8).$$

The norm on the set of one parameter subgroups of $T$ is weighted by $\kappa$. In this case it is simply the standard norm. Therefore, to find one parameter subgroups in $T$ that maximally destabilize $M$, we are maximizing the function 
$$f=\frac{4(a_1+a_2+a_3+a_4)-4(a_5+a_6+a_7+a_8)}{(a_1^2+a_2^2+a_3^2+ a_4^2+a_5^2+a_6^2+a_7^2 +a_8^2)^{1/2}},$$ with the side constraints given by \cref{constraints_lp_short_example}. With the help of SageMath, we calculated that $f$ attains maximum on the positive ray of the point $(0,\frac{4}{3},\frac{4}{3},0,\frac{4}{3},0,0,-4)$. 

Notice that the constraints depends only on the nonzero entries of $M$. That is, replacing the nonzero entries of $M$ by any other nonzero scalars does not change the linear programming problem. The set of the positions of the nonzero entries of $M$ is called the support of $M$.

We must be aware that up to this point, we are only calculating one parameter subgroups in $T$. In general, maximally destabilizing one parameter subgroups of an unstable point do not have to be in a particular maximal torus. To work with a particular maximal torus, one needs to enumerate all supports coming from the orbit of the unstable point,  conduct linear programming for each support, and choose the maximum result. Often times the challenge is to determine the supports of points in the orbit. Beyond that the number of supports of points in the orbit can be quite large, leading to vast amount of linear programming required. As we can see, enumerating supports of $g\cdot M$ for all $g\in G$ in this relatively small example already gets tedious. We shall not continue this brute force procedure here, as the main point to illustrate the hardness of finding Kempf's one parameter subgroups in general, and to demonstrate the value of \Cref{one_PS_via_Kempf's_filtration} is made. 
\bibliography{bibitem}
\bibliographystyle{alpha}
\appendix

\section{Algebraic complexity towards the stability of representations of quivers}\label{appendixB}
In this section, we will demonstrate how algebraic complexity tools can be applied to computing the discrepancy, together with a witnesses, for a representation of an acyclic quiver.
\subsection{Preliminaries}
For starters, we let $\mathbf{F}$ be an infinite field of arbitrary characteristic. The space of $n\times n$ matrices over $\mathbf{F}$ is denoted $M(n,\mathbf{F})$.
Let $\mathcal{B}$ be a subspace of $M(n,\mathbf{F})$. If $U$ is a subspace of $\mathbf{F}^n$, we set $$\mathcal{B}(U)=\sum_{B\in\mathcal{B}}BU,$$ where $\sum$ denote the sum of subspaces.  We say $U$ is a $c$-\emph{shrunk subspace of }$\mathcal{B}$ if $$\dim U-\dim\mathcal{B}(U)\geq c.$$ We say $U$ is a \emph{shrunk subspace of }$\mathcal{B}$ if $U$ is a $c$-shrunk subspace of $\mathcal{B}$ for some $c\in\mathbf{Z}^{+}$. We define the \emph{discrepancy of }$\mathcal{B}$ as the integer  
$$\disc(\mathcal{B})=\max_{c\in\mathbf{N}}\{\exists\text{ a }c\text{-shrunk subspace of }\mathcal{B}\}.$$

\begin{remark}\label{existence_of_minimality_c_shrunk}
For any subspace $\mathcal{B}\subset M(n,\mathbf{F})$, if $c=\disc(\mathcal{B})$, it is shown in \cite{https://doi.org/10.48550/arxiv.2111.00039}, Lemma 2.1 that the intersection of $c$-shrunk subspaces is again a $c$-shrunk subspace. Therefore, there is an unique minimal $c$-shrunk subspace of $\mathcal{B}$.
\end{remark}

The following is a summary of various results from \cite{MR3868734} and \cite{MR3354797} .
\begin{theorem}[Theorem 1.5 \cite{MR3868734}, Proposition 7, Lemma 9 \cite{MR3354797}]\label{thm_1.5_ISQ18}
Let $\mathcal{B}$ be a matrix space in $M(n,\mathbf{F})$ with $\disc(\mathcal{B})=c$ (a priori unknown). There exists a deterministic algorithm using $n^{O(1)}$ arithmetic operations over $\mathbf{F}$ that returns a $c$-shrunk subspace of $\mathcal{B}$. Moreover, the $c$-shrunk subspace produced by this algorithm is the minimal $c$-shrunk subspaces of $\mathcal{B}$.
\end{theorem}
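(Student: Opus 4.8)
The plan is to reduce the statement to the constructive computation of the \emph{non-commutative rank} of the symbolic matrix attached to $\mathcal{B}$, and then to invoke the Wong-sequence machinery of Ivanyos--Qiao--Subrahmanyam. Write $\Lambda_{\mathcal{B}}=\sum_i x_i B_i$ for a basis $B_1,\dots,B_m$ of $\mathcal{B}$, and let $\operatorname{ncrk}(\mathcal{B})$ be the rank of $\Lambda_{\mathcal{B}}$ over the free skew field. By the Fortin--Reutenauer--Cohn theorem one has $\disc(\mathcal{B})=n-\operatorname{ncrk}(\mathcal{B})$, and a $c$-shrunk subspace with $c=\disc(\mathcal{B})$ is precisely an optimal witness to this rank deficiency; the minimal such subspace is the canonical one. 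Thus it suffices to compute $\operatorname{ncrk}(\mathcal{B})$ together with its minimal witness. The one genuinely deep input I would import is the \emph{blow-up bound}: setting $\mathcal{B}^{[d]}:=\mathcal{B}\otimes M(d,\mathbf{F})\subseteq M(nd,\mathbf{F})$, the ordinary generic rank satisfies $\rk(\mathcal{B}^{[d]})=d\cdot\operatorname{ncrk}(\mathcal{B})$ already for some explicit $d=O(n)$; this is the substance of \cite{MR3868734}.

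Granting this, I would first compute $\operatorname{ncrk}(\mathcal{B})=\tfrac1d\rk(\mathcal{B}^{[d]})$ for that value of $d$, obtaining the generic rank either by Gaussian elimination over the rational function field $\mathbf{F}(x_1,\dots,x_m)$ or, since $\mathbf{F}$ is infinite, by evaluating the $x_i$ at the points of a fixed grid of $\operatorname{poly}(n)$ field elements and taking the largest resulting rank; both cost $n^{O(1)}$ arithmetic operations, and then $\disc(\mathcal{B})=n-\tfrac1d\rk(\mathcal{B}^{[d]})$. For the witness, fix a maximum-rank element $A$ of $\mathcal{B}^{[d]}$ (found by the same grid search) and run the \emph{second Wong sequence} $\mathcal{W}_0=0$, $\mathcal{W}_{j+1}=A^{-1}\bigl(\mathcal{B}^{[d]}(\mathcal{W}_j)\bigr)$, where $A^{-1}(\cdot)$ denotes full preimage; this increasing chain stabilizes after at most $nd$ steps. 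Proposition 7 and Lemma 9 of \cite{MR3354797} then apply: because $A$ already attains $\operatorname{ncrk}(\mathcal{B}^{[d]})$, its limit $\mathcal{W}_\infty$ satisfies $\mathcal{B}^{[d]}(\mathcal{W}_\infty)=A(\mathcal{W}_\infty)$ and is therefore a $c$-shrunk subspace of $\mathcal{B}^{[d]}$ with $c=nd-\rk(A)=d\cdot\disc(\mathcal{B})$. A descent through the tensor factorization --- available precisely because $d$ is taken large enough --- then extracts from $\mathcal{W}_\infty$ a $\disc(\mathcal{B})$-shrunk subspace $U$ of $\mathcal{B}$.

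It remains to address minimality and the time bound. For minimality, $\mathcal{W}_\infty$ is the least fixed point of a monotone operator depending only on $\mathcal{B}$ (not on the incidental choice of $A$ within its blow-up), and one checks that it is contained in every $c$-shrunk subspace of $\mathcal{B}^{[d]}$; combined with \Cref{existence_of_minimality_c_shrunk}, which guarantees a unique minimal $c$-shrunk subspace, this forces the returned $U$ to be the minimal $\disc(\mathcal{B})$-shrunk subspace of $\mathcal{B}$. For the complexity, since $d=O(n)$ the blow-up $\mathcal{B}^{[d]}$ lives in $M(O(n^2),\mathbf{F})$, the Wong sequence has length $O(n^2)$, and each of its steps --- together with the generic-rank computations and the descent --- is linear algebra over $\mathbf{F}$ costing $n^{O(1)}$ arithmetic operations, so the whole algorithm runs in $n^{O(1)}$ arithmetic operations.

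The main obstacle, and the single step I would take as a black box rather than reprove, is the blow-up bound $\rk(\mathcal{B}^{[d]})=d\cdot\operatorname{ncrk}(\mathcal{B})$ for $d=O(n)$. Without it the identity $\operatorname{ncrk}(\mathcal{B})=\lim_{d\to\infty}\tfrac1d\rk(\mathcal{B}^{[d]})$ is only asymptotic and yields no polynomial handle on how large a blow-up to use. Its proof is the technical core of \cite{MR3868734} and relies on degree bounds for generators of the ring of matrix semi-invariants, in the spirit of Derksen--Makam. Everything else --- the reduction to non-commutative rank, the second Wong sequence and its stabilization, the derandomization over an infinite field, and the minimality bookkeeping --- is careful but essentially routine linear algebra.
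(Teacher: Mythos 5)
You should know that the paper does not prove \Cref{thm_1.5_ISQ18} at all: it is imported verbatim as Theorem 1.5 of \cite{MR3868734} together with Proposition 7 and Lemma 9 of \cite{MR3354797}, and the only accompanying text is a remark about the choice of ground field. There is therefore no internal proof to compare against; what can be assessed is whether your sketch faithfully reflects the argument in those references. It largely does: the reduction of $\disc(\mathcal{B})$ to the non-commutative rank via the Fortin--Reutenauer/Cohn duality $\disc(\mathcal{B})=n-\operatorname{ncrk}(\mathcal{B})$, the blow-up regularity bound $\rk(\mathcal{B}\otimes M(d,\mathbf{F}))=d\cdot\operatorname{ncrk}(\mathcal{B})$ for some $d=O(n)$, and the second Wong sequence started at a rank-maximal element of the blow-up are exactly the architecture of \cite{MR3868734}, while Proposition 7 and Lemma 9 of \cite{MR3354797} are precisely the statements that the Wong-sequence limit is a shrunk subspace contained in every other optimal one, which is where the minimality claim comes from.

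Two caveats. First, your ``one genuinely deep input'' --- the blow-up bound --- is not an auxiliary lemma you may import while still claiming to have proved the theorem: it is the main content of \cite{MR3868734}, so the proposal is an accurate road map rather than an independent proof. Second, the descent step (extracting a $\disc(\mathcal{B})$-shrunk subspace of $\mathcal{B}$ from a $d\cdot\disc(\mathcal{B})$-shrunk subspace of $\mathcal{B}\otimes M(d,\mathbf{F})$, and checking that minimality survives this descent) is dispatched in a single clause but is a genuine piece of work in the references, resting on the fact that optimal shrunk subspaces of the blow-up can be rounded to ones of the form $U\otimes\mathbf{F}^{d}$; as written, your minimality argument only operates at the blow-up level and does not yet connect to the unique minimal $c$-shrunk subspace of $\mathcal{B}$ itself guaranteed by \Cref{existence_of_minimality_c_shrunk}. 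Neither caveat makes the sketch wrong, but both mark the places where it is not self-contained.
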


\begin{remark}
The original Theorem 1.5 from \cite{MR3868734} deals with how field elements are represented in a computer, which is not the main concern of this paper. Personal communications with the authors of \cite{MR3868734} confirmed that in terms of the number of operations needed, there is nothing special about the field of rational numbers stated in the theorem in \cite{MR3868734}. Any field with large enough set of elements to work with will suffice.
\end{remark}

We now follow the recipe of \cite{https://doi.org/10.48550/arxiv.2111.00039} to build a matrix space from a given representation of an acyclic quiver, and establish the connection between the discrepancy of the matrix space and that of the representation. Recall we wrote a quiver as $Q=(Q_0,Q_1,t,h)$. Multiple arrows between any two vertices are allowed. 
We fix a weight $\theta:Q_0\rightarrow\mathbf{Z}$ and a representation $W$ of $Q$ of dimension $\mathbf{d}$ with $\theta(\mathbf{d})=0$. If $p=a_ja_{j-1}\cdots a_1$ is a path in $Q$, we let $W(p)$ denote the composition $W(a_j)W(a_{j-1})\cdots W(a_1)$. 

Let $x_1,\ldots,x_n$ (resp. $y_1,\ldots,y_m$) be the vertices of $Q$ where $\theta$ takes positive values (resp. negative values). We then define $$\theta_{+}(x_i)=\theta(x_i)\text{ }(\text{resp. }\theta_{-}(y_j)=-\theta(y_j)),$$ 
$$N:=\sum_{i}\theta_{+}(x_i)\mathbf{d}(x_i)=\sum_{j}\theta_{-}(y_j)\mathbf{d}(y_j),$$ and $$M=\sum_{j}\theta_{-}(y_j)\text{ }(\text{resp. }M'=\sum_{i}\theta_{+}(x_i)).$$ For each $i\in[n]$ and each $j\in [m]$, we let 
$$I^{+}_{i}=\{r\in\mathbf{N}\mid \sum_{k=1}^{i-1}\theta_{+}(x_i)< r\leq \sum_{k=1}^{i}\theta_{+}(x_i)\},$$ and 
$$I_{j}^{-}=\{q\in\mathbf{N}\mid \sum_{k=1}^{j-1}\theta_{-}(y_j)< q  \leq\sum_{k=1}^{j}\theta_{-}(y_j)\}.$$

For each $q\in I_{j}^{-}$, $r\in I_{i}^{+}$, and each path $p$ between $x_i$ and $x_j$, we define the $M\times M'$ block matrix $A^{i,j,p}_{q,r}$ whose $(q,r)$-block is the $\mathbf{d}(y_j)\times \mathbf{d}(x_i)$ matrix representing $W(p):\mathbf{R}^{d(x_i)}\rightarrow\mathbf{R}^{d(y_j)}$, and whose other blocks are zero block matrices of appropriate sizes. In this way, the same matrix representing $W(p)$ is placed in the various $(q',r')$-blocks of $M\times M'$ block matrices for $q'\in I_{j}^{-}$, and  $r'\in I_{i}^{+}$. We set $$\mathcal{A}_{W,\theta}=\Span(\{A^{i,j,p}_{q,r}\mid i\in[n], j\in[m], q\in I_{j}^{-},r\in I_{i}^{+}, p\in \mathcal{P}_{i,j}\})\subset M(N,\mathbf{F}).$$
where $\mathcal{P}_{i,j}$ is the collection of paths from $x_{i}$ to $y_j$. 

We have an immediate relation described by the 
\begin{lemma}\label{rep_disc_bounded_by_mat_disc}
For any representation $W$, we have $\disc(W,\theta)\leq \disc(\mathcal{A}_{W,\theta})$. 
\end{lemma}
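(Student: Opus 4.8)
The plan is to take a subrepresentation $W' \subseteq W$ witnessing (or at least lower-bounding) $\disc(W,\theta)$ and manufacture from it a shrunk subspace of $\mathcal{A}_{W,\theta}$ of at least the same defect. The natural candidate: for each positive vertex $x_i$, the subspace $W'_{x_i} \subseteq \mathbf{F}^{\mathbf{d}(x_i)}$ sits inside the $r$-th block of $\mathbf{F}^N$ for each $r \in I_i^+$; so I would set
\[
U = \bigoplus_{i \in [n]} \bigoplus_{r \in I_i^+} W'_{x_i} \subseteq \mathbf{F}^N = \bigoplus_{i}\bigoplus_{r \in I_i^+}\mathbf{F}^{\mathbf{d}(x_i)}.
\]
Then $\dim U = \sum_i \theta_+(x_i)\dim W'_{x_i} = \theta_+(\pmb{\dim} W')$. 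First I would compute $\mathcal{A}_{W,\theta}(U)$: since each generator $A^{i,j,p}_{q,r}$ sends the $r$-block ($r \in I_i^+$) into the $q$-block ($q \in I_j^-$) via $W(p)$ and kills everything else, $\mathcal{A}_{W,\theta}(U)$ is contained in $\bigoplus_j \bigoplus_{q \in I_j^-} \big(\sum_{i}\sum_{p \in \mathcal{P}_{i,j}} W(p)(W'_{x_i})\big)$. Because $W'$ is a subrepresentation, $W(p)(W'_{x_i}) \subseteq W'_{y_j}$ for every path $p$ from $x_i$ to $y_j$, so this sum lands inside $W'_{y_j}$; hence $\dim \mathcal{A}_{W,\theta}(U) \le \sum_j \theta_-(y_j)\dim W'_{y_j} = \theta_-(\pmb{\dim} W')$.

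Putting these together,
\[
\dim U - \dim \mathcal{A}_{W,\theta}(U) \ge \theta_+(\pmb{\dim} W') - \theta_-(\pmb{\dim} W') = \theta(W'),
\]
using that $\theta = \theta_+ - \theta_-$ on dimension vectors (the positive and negative parts supported on the $x_i$'s and $y_j$'s respectively, and $\theta$ vanishing on the remaining vertices, which contribute nothing to $U$). Taking $W'$ to be a $\theta$-optimal subrepresentation of $W$ gives a $\disc(W,\theta)$-shrunk subspace of $\mathcal{A}_{W,\theta}$ — and if $\disc(W,\theta) \le 0$ the inequality $\disc(W,\theta)\le \disc(\mathcal{A}_{W,\theta})$ holds trivially since the zero subspace is always $0$-shrunk and $\disc(\mathcal{A}_{W,\theta})\ge 0$ — so in all cases $\disc(W,\theta) \le \disc(\mathcal{A}_{W,\theta})$.

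The only genuinely delicate point is bookkeeping: one must check that the block structure is being read correctly — that the generators of $\mathcal{A}_{W,\theta}$ really do act block-diagonally from $I_i^+$-blocks to $I_j^-$-blocks as described, so that $U$'s image is confined to the $W'_{y_j}$ summands and no cross-terms leak dimension. I expect this is the main (modest) obstacle: it is purely a matter of unwinding the definitions of $A^{i,j,p}_{q,r}$, $I_i^+$, $I_j^-$, and the identification $\mathbf{F}^N \cong \bigoplus_i \bigoplus_{r \in I_i^+}\mathbf{F}^{\mathbf{d}(x_i)}$, with no real idea required beyond "a subrepresentation is closed under all the structure maps, hence under all path maps." Everything else is a one-line dimension count.
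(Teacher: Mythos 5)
Your proposal is correct and follows essentially the same route as the paper: the paper likewise sets $U=\bigoplus_{i}(W'_{x_i})^{\theta_{+}(x_i)}$ for an arbitrary subrepresentation $W'$, observes that $\mathcal{A}_{W,\theta}(U)=\bigoplus_{j,q}\sum_{i}\sum_{p\in\mathcal{P}_{i,j}}W(p)(W'_{x_i})$ is contained in $\bigoplus_{j}(W'_{y_j})^{\theta_{-}(y_j)}$ because $W'$ is closed under all path maps, and concludes $\theta(W')\leq\dim U-\dim\mathcal{A}_{W,\theta}(U)$ by the same dimension count. The only difference is cosmetic: your extra remark about the case $\disc(W,\theta)\leq 0$ is unnecessary since the zero subrepresentation already forces $\disc(W,\theta)\geq 0$.
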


\begin{proof}
Let $W'\subset W$ be a subrepresentation. Define $U=\bigoplus_{i}(W'_{x_i})^{\theta_{+}(x_i)}$. It then follows that 
\begin{equation*}\begin{split}\theta(W')&=\sum_{i}\theta_{+}(x_i)W'_{x_i}-\sum_j\theta_{-}(y_j)W'_{y_j}=\dim U-\dim\big(\bigoplus_j W_{y_j}^{\theta_-(y_j)}\big)\\
&\leq \dim U-\dim\big(\bigoplus_{j,q}\sum_{i}\sum_{p\in \mathcal{P}_{ij}}W(p)(W'_{x_i})\big)=\dim U-\dim \mathcal{A}_{W,\theta}(U).\end{split}\end{equation*}
This finishes the proof of the lemma.
\end{proof}

\subsection{Relating the two discrepancies}
In this section we relate better the two discrepancies $\disc(W,\theta)$, and $\disc(\mathcal{A}_{W,\theta})$. We will show that they are equal, and demonstrate how to translate a witness to one discrepancy into a witness to the other. For this we need to introduce a right action on $\bigoplus_iW^{\theta_{+}(x_i)}_{x_i}$ and on $\bigoplus_{j}W_{y_j}^{\theta_{-}(y_j)}$ by the paths of $Q$. For the following, homomorphisms and endomorphisms are taken in the category of representations of $Q$. In addition, we let $c=\disc(\mathcal{A}_{W,\theta})$ so that by a $c$-shrunk subspace of $\mathcal{A}_{W,\theta}$ we mean a subspace that witnesses $\disc(\mathcal{A}_{W\,\theta})$.

For any vertex $x\in Q_0$, we let $P_x$ be the representation of $Q$ where for each $y\in Q_0$, $(P_x)_y$ is the vector space with basis the paths from $x$ to $y$. Here $(P_x)_x$ is one dimensional with basis the trivial path $e_x$. For any $a\in Q_1$, we define $$P_x(a):(P_x)_{ta}\rightarrow (P_x)_{ha}$$ by post composition. 

For any representation $V$ of $Q$, there is a natural identification $\Hom(P_x,V)\simeq V_x$. Letting $$P_1=\bigoplus_i P_{x_i}^{\theta_{+}(x_i)}\text{ } \big(\text{resp. }P_0=\bigoplus_j P_{y_j}^{\theta_{-}(y_j)}\big),$$ we have 
\begin{equation*}
\Hom(P_1,W)\simeq \bigoplus_i W_{x_i}^{\theta_{+}(x_i)}\text{ }\big(\text{resp. }\Hom(P_0,W)\simeq \bigoplus_{j}W_{y_j}^{\theta_{-}(y_j)}\big).\end{equation*}
Under the above identifications, a morphism $\varphi\in\Hom(P_0,P_1)$ induces a morphism $A(\varphi):\bigoplus_{i}W_{x_i}^{\theta_{+}(x_i)}\rightarrow \bigoplus_{j}W_{y_j}^{\theta_{-}(y_j)}$ by pre-composition. Moreover, if $p$ is a path from $x_i$ to $y_j$ placed in the $(q,r)$-th component of $\Hom(P_0,P_1)\simeq\bigoplus_{i,j}P_{x_i}(y_j)^{\theta_{-}(y_j)\theta_{+}(x_i)},$ then the map it induces is precisely $A^{i,j,p}_{q,r}$. Therefore, for any subspace $U\subset \bigoplus_{i}W_{x_i}^{\theta_{+}(x_i)}$, $\mathcal{A}_{W,\theta}(U)$ is the same as the sum $\sum_{\varphi}A(\varphi)(U)$, taken over all $\varphi\in\Hom(P_0,P_1)$. 

Similarly, $\Endo(P_1)$ (resp. $\Endo(P_0)$) induces a right action on $\bigoplus_{i}W_{x_i}^{\theta_{+}(x_i)}$ (resp. $\bigoplus_{j}W_{y_j}^{\theta_{-}(y_j)}$) by pre-composition. We recall \Cref{existence_of_minimality_c_shrunk} that there is a unique minimal subspace that witnesses $\disc(\mathcal{A}_{W,\theta}).$ Now we quote the 
\begin{lemma}[Lemma 3.2 \cite{https://doi.org/10.48550/arxiv.2111.00039}]
Let $U\subset\bigoplus_{i}W_{x_i}^{\theta_{+}(x_i)}$ be the minimal $c$-shrunk subspace of $\mathcal{A}_{W\,\theta}$. Then $U$ (resp. $\sum_{\varphi}A(\varphi)(U)$) is a right $\Endo(P_1)$ (resp.  $\Endo(P_0)$)-module. Namely, $U$ (resp. $\sum_{\varphi}A(\varphi)(U)$) is closed under the right action of $\Endo(P_1)$ (resp. $\Endo(P_0)$) defined by pre-composition.
\end{lemma}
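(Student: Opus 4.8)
The plan is to play the functoriality of the assignment $\varphi\mapsto A(\varphi)$ off against the uniqueness of the minimal witness recorded in \Cref{existence_of_minimality_c_shrunk}. Everything rests on the observation that the right actions in the statement interact with the operators $A(\varphi)$ purely through associativity of composition. Writing the right action of $\psi\in\Endo(P_1)$ on $\Hom(P_1,W)\simeq\bigoplus_i W_{x_i}^{\theta_+(x_i)}$ as $f\mapsto f\psi:=f\circ\psi$, I would first record the identity $A(\varphi)(f\psi)=(f\circ\psi)\circ\varphi=f\circ(\psi\circ\varphi)=A(\psi\circ\varphi)(f)$, valid for every $\varphi\in\Hom(P_0,P_1)$. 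Since $\psi\circ\varphi$ again lies in $\Hom(P_0,P_1)$, summing over $\varphi$ and using the description $\mathcal{A}_{W,\theta}(-)=\sum_\varphi A(\varphi)(-)$ recorded just above the lemma yields $\mathcal{A}_{W,\theta}(U\psi)=\sum_\varphi A(\psi\circ\varphi)(U)\subseteq\mathcal{A}_{W,\theta}(U)$, where $U\psi$ denotes the image of $U$ under the endomorphism $\psi$.

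With this in hand, the first assertion follows from a dimension count. I would set $U'=U+U\psi$; since $\mathcal{A}_{W,\theta}$ is additive on sums of subspaces and $\mathcal{A}_{W,\theta}(U\psi)\subseteq\mathcal{A}_{W,\theta}(U)$, this gives $\mathcal{A}_{W,\theta}(U')=\mathcal{A}_{W,\theta}(U)$, while of course $\dim U'\geq\dim U$. Hence $\dim U'-\dim\mathcal{A}_{W,\theta}(U')\geq\dim U-\dim\mathcal{A}_{W,\theta}(U)=c$. Since $c=\disc(\mathcal{A}_{W,\theta})$ is the maximal possible defect, this is forced to be an equality, so $U'$ is again a $c$-shrunk subspace; it contains $U$, which is the unique minimal $c$-shrunk subspace, so $U'=U$, that is, $U\psi\subseteq U$. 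Letting $\psi$ range over $\Endo(P_1)$ proves that $U$ is a right $\Endo(P_1)$-module.

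The second assertion should come out with no appeal to minimality at all. For $\eta\in\Endo(P_0)$, with right action $g\mapsto g\eta:=g\circ\eta$ on $\Hom(P_0,W)\simeq\bigoplus_j W_{y_j}^{\theta_-(y_j)}$, associativity again gives $(A(\varphi)(f))\eta=(f\circ\varphi)\circ\eta=f\circ(\varphi\circ\eta)=A(\varphi\circ\eta)(f)$ for all $f$ and all $\varphi\in\Hom(P_0,P_1)$. Since $\varphi\circ\eta$ stays in $\Hom(P_0,P_1)$, the space $\sum_\varphi A(\varphi)(U)=\mathcal{A}_{W,\theta}(U)$ is visibly stable under right multiplication by $\eta$, hence is a right $\Endo(P_0)$-module.

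The only step I expect to require real care is the passage from the inclusion $\mathcal{A}_{W,\theta}(U\psi)\subseteq\mathcal{A}_{W,\theta}(U)$ to the conclusion $U\psi\subseteq U$: trying to show directly that $U\psi$ is itself a $c$-shrunk subspace is hopeless, because neither $\dim(U\psi)$ nor $\dim\mathcal{A}_{W,\theta}(U\psi)$ is individually controlled. The right move is to enlarge to $U+U\psi$, whose image under $\mathcal{A}_{W,\theta}$ does not grow, and then to lean on the maximality of $c$ together with the uniqueness statement in \Cref{existence_of_minimality_c_shrunk}. Everything else is bookkeeping with the identifications $\Hom(P_x,V)\simeq V_x$ and the equality $\mathcal{A}_{W,\theta}(U)=\sum_\varphi A(\varphi)(U)$.
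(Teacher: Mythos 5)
The paper does not actually prove this lemma: it imports it verbatim from Lemma~3.2 of \cite{https://doi.org/10.48550/arxiv.2111.00039}, so there is no in-paper argument to compare against. Your proof is correct and self-contained. The identity $A(\varphi)(f\circ\psi)=A(\psi\circ\varphi)(f)$, the additivity $\mathcal{A}_{W,\theta}(U_1+U_2)=\mathcal{A}_{W,\theta}(U_1)+\mathcal{A}_{W,\theta}(U_2)$, and the maximality of $c=\disc(\mathcal{A}_{W,\theta})$ are all that is needed, and your handling of the second assertion (pure associativity, no minimality) is right. One point of logic deserves correction, though it does not damage the proof: in the first part you close by saying that $U'=U+U\psi$ ``contains $U$, which is the unique minimal $c$-shrunk subspace, so $U'=U$.'' Minimality of $U$ only says that $U$ is contained in every $c$-shrunk subspace; a $c$-shrunk subspace containing $U$ need not equal $U$ (compare \Cref{a_short_example}, where several optimal subrepresentations properly contain the minimal one). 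What actually forces $U'=U$ is already on your page: you have the subspace equality $\mathcal{A}_{W,\theta}(U')=\mathcal{A}_{W,\theta}(U)$, so the forced equality of defects $\dim U'-\dim\mathcal{A}_{W,\theta}(U')=c=\dim U-\dim\mathcal{A}_{W,\theta}(U)$ gives $\dim U'=\dim U$, and together with $U\subseteq U'$ this yields $U'=U$. Note in particular that minimality of $U$ is never genuinely used in your argument, so you have in fact proved the stronger statement that every $c$-shrunk subspace, for $c=\disc(\mathcal{A}_{W,\theta})$, is a right $\Endo(P_1)$-module.
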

We now apply the lemma to establish the 
\begin{proposition}\label{subrep_preconstruction}
Let $U$ be the minimal $c$-shrunk subspace of $\mathcal{A}_{W,\theta}$, and let $V=\sum_{\varphi}A(\varphi)(U)$. For each $i,r$ (resp. $j,q$), let $\pi_{i,r}:\bigoplus_{i}W_{x_i}^{\theta_{+}(x_i)}\rightarrow W_{x_i}$ (resp. $\pi_{j,q}: \bigoplus_{j}W_{y_j}^{\theta_{-}(y_j)}\rightarrow W_{y_j}$) be the projection onto the $(i,r)$-th (resp. $(j,q)$-th) component.
The following properties then hold:
\begin{enumerate}
    \item $U=\bigoplus_{i}(W'_{x_i})^{\theta_{+}(x_i)}$, where $W'_{x_i}=\pi_{i,r}(U)\subseteq W_{x_i}$ for any $r\in I^{+}_{i}$.
    \item $V=\bigoplus_{j}(W'_{y_j})^{\theta_{-}(y_j)}$, where $W'_{y_j}=\pi_{j,q}(V)=\sum_{i,p\in\mathcal{P}_{i,j}}W(p)(W'_{x_i})\subseteq W_{y_j}$ for any $q\in I_{j}^{-}$.
\end{enumerate}
\end{proposition}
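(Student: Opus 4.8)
The plan is to read off both statements from the right-module structures furnished by the preceding lemma. Because $Q$ is acyclic, the only path from $x_i$ to itself is the trivial one, so $\Hom(P_{x_i},P_{x_i})\simeq k$; hence $\Endo(P_1)$ contains, for each $i$ and all $r,r'\in I_i^{+}$, the endomorphism $\psi_{i,r,r'}$ that maps the $(i,r')$-th copy of $P_{x_i}$ inside $P_1$ identically onto the $(i,r)$-th copy and annihilates every other direct summand (when $r=r'$ this is the idempotent projecting onto the $(i,r)$-th copy and killing everything else). The analogous endomorphisms $\psi'_{j,q,q'}$ live in $\Endo(P_0)$. It is these ``intra-block'' endomorphisms that force $U$ and $V$ to be direct sums of diagonally repeated subspaces.

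For (1) I would unwind the identification $\Hom(P_1,W)\simeq\bigoplus_i W_{x_i}^{\theta_{+}(x_i)}$ and note that precomposition by $\psi_{i,r,r'}$ carries a tuple $u=(u_{i,r})$ to the tuple whose $(i,r')$-entry is $u_{i,r}$ and whose remaining entries vanish. The preceding lemma says the minimal $c$-shrunk subspace $U$ is closed under this right action, so all such ``delta tuples'' lie in $U$. Taking $r=r'$ and letting $u$ run over $U$ shows $U=\bigoplus_{i,r}\pi_{i,r}(U)$, while the choices with $r\neq r'$ give $\pi_{i,r}(U)=\pi_{i,r'}(U)$ for all $r,r'\in I_i^{+}$; setting $W'_{x_i}$ equal to this common subspace yields $U=\bigoplus_i (W'_{x_i})^{\theta_{+}(x_i)}$, which is (1).

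For (2) I would run the identical argument with $V=\sum_{\varphi}A(\varphi)(U)$ in place of $U$ and $\Endo(P_0)$ (acting by precomposition, as in the lemma) in place of $\Endo(P_1)$: the ``delta tuple'' reasoning with the $\psi'_{j,q,q'}$ gives $V=\bigoplus_j (W'_{y_j})^{\theta_{-}(y_j)}$ with $W'_{y_j}:=\pi_{j,q}(V)$ independent of $q\in I_j^{-}$. It then remains only to identify $W'_{y_j}$, which I would do by expanding $A(\varphi)$ as a linear combination of the block maps $A^{i,j,p}_{q,r}$ and recalling that $A^{i,j,p}_{q,r}$ transports the $(i,r)$-slot into the $(j,q)$-slot via the linear map $W(p)$: the $(j,q)$-component of $A(\varphi)(u)$ is $\sum_{i,r,p}c^{i,j,p}_{q,r}(\varphi)\,W(p)(u_{i,r})$. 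As $\varphi$ ranges over $\Hom(P_0,P_1)$ the scalars $c^{i,j,p}_{q,r}(\varphi)$ are unconstrained, and by (1) each $u_{i,r}$ ranges over all of $W'_{x_i}$, so $\pi_{j,q}(V)=\sum_i\sum_{p\in\mathcal{P}_{i,j}}W(p)(W'_{x_i})$, which is the asserted formula for $W'_{y_j}$.

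I expect the only genuinely substantive point to be the first one: recognizing that the right $\Endo(P_1)$-action (resp.\ $\Endo(P_0)$-action) is already rich enough --- since it contains the full endomorphism algebra of each repeated-projective block --- to pin $U$ (resp.\ $V$) down to a ``box'' subspace. Everything after that amounts to carefully tracking the natural identifications $\Hom(P_x,V)\simeq V_x$ and $\Hom(P_0,P_1)\simeq\bigoplus_{i,j}(P_{x_i})_{y_j}^{\theta_{-}(y_j)\theta_{+}(x_i)}$ together with the compatibility of $A(-)$ with them, which is routine; the degenerate cases (a block of size one, or $W'_{x_i}=0$) require no special treatment.
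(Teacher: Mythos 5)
Your proposal is correct and follows essentially the same route as the paper: the paper likewise obtains (1) by placing the (trivial) path $e_{x_i}$ in the various $(r',r)$-components of $\Endo(P_1)$ and using the right-module closure of the minimal $c$-shrunk subspace $U$ — which is exactly your $\psi_{i,r,r'}$ — and it proves (2) by the same $\Endo(P_0)$-module argument combined with expanding $\sum_{\varphi}A(\varphi)(U)$ blockwise into $\bigoplus_{j,q}\sum_{i,p,r}W(p)(\pi_{i,r}(U))$. The only cosmetic difference is that the paper records the closure of $U$ under arbitrary paths $x_{i'}\to x_i$ (needed later in the proof of the subsequent theorem) before specializing to $i'=i$, whereas you use only the trivial paths, which suffices for this proposition.
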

\begin{proof}
Let $u\in U$ and let $p$ be a path from $x_{i'}$ to $x_{i}$ in the $(r',r)$-th component of $\Endo(P_1)\simeq \bigoplus_{i,i'}(P_{x_{i'}})_{x_i}^{\theta_{+}(x_{i'})\theta_{+}(x_{i})}.$ The pre-composition $u\circ p$ is then $W(p)(\pi_{i',r'}(u))$ placed in the $r$-th component of $W(x_{i})^{\theta_{+}(x_i)}$. Since $U$ is a right $\text{End}(P_{1})$-module, and the same path $p$ can be placed in any $(r',r)$-th component of $\Endo(P_1)$, the image $W(p)(\pi_{i'r'}(u))$ when placed in the $r$-th component of $W(x_{i})^{\sigma_{+}(x_i)}$, is still in $U$ for any $r'\in I^{+}
_{i'}$ and any $r\in I^{+}_{i}$. In particular, taking $i'=i$, we see that for any $r_1,r_2\in I_{i}^{+}$, $\pi_{i,r_1}(u)$ when placed at the $r_2$-th component of $W_{x_i}^{\theta_{+}(x_i)}$, is still in $U$. Basically, we obtain $$\pi_{i,r_1}(U)=\pi_{i,r_2}(U)\text{ for any }r_1,r_2\in I_{i}^{+}\text{, and }$$ $$U=\bigoplus_{i}\big(\pi_{i,r}(U)\big)^{\theta_{+}(x_i)}\text{ for any r}\in I_{i}^{+}.$$ This establishes property (1).  

Using the fact that $V$ is a right $\Endo(P_0)$-module, we also get $$V=\bigoplus_{j}\big(\pi_{j,q}(V)\big)^{\theta_{-}(y_j)}\text{ for any q}\in I^{-}_{j}.$$  
On the other hand, it is clear that \begin{equation*}
\begin{split}
\sum_{\varphi}A(\varphi)(U)&=\bigoplus_{j,q}\big(\sum_{i,p}\sum_{r}A^{i,j,p}_{q,r}\cdot \pi_{i,r}(U)\big)=
\bigoplus_{j,q}\big(\sum_{i,p}\sum_{r}W(p)(\pi_{i,r}(U)\big)\\
&=\bigoplus_{j}\big(\sum_{i,p\in\mathcal{P}_{i,j}}W(p)(W'_{x_i})\big)^{\theta_{-}(y_j)}
\end{split}\end{equation*}
This establishes property (2), finishing the proof of the proposition.
\end{proof}

\begin{theorem}\label{from_mat_disc_to_rep_disc}
Let $U\subset M(N,\mathbf{F})$ be the minimal $c$-shrunk subspace of $\mathcal{A}_{W,\theta}$. Let $\pi_{i,r}:\bigoplus_{i}W_{x_i}^{\sigma_{+}(x_i)}\rightarrow W_{x_i}$ be the projection onto the $(i,r)$-th component. 
For each $i\in[n]$, set $$W'_{x_i}=\pi_{i,r}(U).$$ For each $j\in[m]$, set $$W'_{y_j}=\sum_{i,p\in\mathcal{P}_{i,j}}W(p)\big(W_{x_i}\big).$$ For each $y\in Q_0$ with $\theta(y)=0$, set 
$$W'_{y}=\sum_{\theta(x)\neq 0}\bigg(\sum_{p:x\rightarrow y}W(p)\big(W_{x}\big)\bigg).$$ Then $\{W'_{x}\}_{x\in Q_0}$ defined this way yields a subrepresentation $W'$ with $\theta(W')=c$. In this case,  we actually have $$\disc(W,\theta)=c.$$
\end{theorem}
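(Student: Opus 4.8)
The plan is to verify first that the assignment $\{W'_x\}_{x \in Q_0}$ really defines a subrepresentation, then compute $\theta(W')$, and finally combine with \Cref{rep_disc_bounded_by_mat_disc} to pin down $\disc(W,\theta)$. For the subrepresentation property, I would check that for each arrow $a:x \to y$ one has $W(a)(W'_x) \subseteq W'_y$. When $\theta(x) \neq 0$, the definitions of $W'_y$ (in the cases $\theta(y) < 0$, $\theta(y) = 0$, or $\theta(y) > 0$) all include the summand $\sum_{p: x \to y} W(p)(W_x)$, and $W(a)(W'_x) \subseteq W(a)(W_x) \subseteq \sum_{p} W(p)(W_x)$ since $a$ is itself a path; when $\theta(x) = 0$ or $\theta(x) > 0$ one argues similarly that $W'_x$ is built from images $W(q)(W_{x'})$ with $\theta(x') \neq 0$, and post-composing with $W(a)$ yields images of longer paths $aq$, which land in $W'_y$ by construction. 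This is the routine bookkeeping step. I should be a little careful that the definition of $W'_{x_i} = \pi_{i,r}(U)$ for $\theta(x_i) > 0$ is independent of $r \in I_i^+$ and is compatible with images of paths landing in it — but this is exactly what \Cref{subrep_preconstruction}(1) provides, since $U$ is the minimal $c$-shrunk subspace and hence a right $\Endo(P_1)$-module.

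Next, to compute $\theta(W')$, the key observation is that $\theta(W')$ only sees the vertices where $\theta$ is nonzero, so
\[
\theta(W') = \sum_i \theta_+(x_i)\dim W'_{x_i} - \sum_j \theta_-(y_j)\dim W'_{y_j}.
\]
By \Cref{subrep_preconstruction}(1), $\dim U = \sum_i \theta_+(x_i)\dim W'_{x_i}$, and by \Cref{subrep_preconstruction}(2) together with the identity $\mathcal{A}_{W,\theta}(U) = \sum_\varphi A(\varphi)(U) = V = \bigoplus_j (W'_{y_j})^{\theta_-(y_j)}$, we get $\dim \mathcal{A}_{W,\theta}(U) = \sum_j \theta_-(y_j)\dim W'_{y_j}$. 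Therefore
\[
\theta(W') = \dim U - \dim \mathcal{A}_{W,\theta}(U) = c,
\]
the last equality because $U$ is by hypothesis a $c$-shrunk subspace witnessing $\disc(\mathcal{A}_{W,\theta}) = c$, hence $\dim U - \dim \mathcal{A}_{W,\theta}(U) = c$ (it cannot exceed $c$, and being a witness it attains $c$). Here I must make sure that $W'_{y_j}$ as defined in the theorem statement agrees with the $W'_{y_j}$ of \Cref{subrep_preconstruction}(2); the theorem writes $W'_{y_j} = \sum_{i, p \in \mathcal{P}_{i,j}} W(p)(W_{x_i})$, but since $W'_{x_i} = \pi_{i,r}(U)$ and the proposition shows $\sum_{i,p} W(p)(W'_{x_i})$ is what appears, I would reconcile this by noting that the theorem presumably intends $W_{x_i}$ to mean $W'_{x_i}$ (a typo), or else argue the two sums coincide — this reconciliation is the one genuine subtlety.

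Finally, the equality $\disc(W,\theta) = c$ follows by squeezing: \Cref{rep_disc_bounded_by_mat_disc} gives $\disc(W,\theta) \leq \disc(\mathcal{A}_{W,\theta}) = c$, while the subrepresentation $W'$ just constructed satisfies $\theta(W') = c$, so $\disc(W,\theta) \geq c$ by definition of the discrepancy of a representation. Hence $\disc(W,\theta) = c$. The main obstacle I anticipate is purely notational: carefully tracking the three families of vertices (positive, negative, zero $\theta$-weight) and confirming that the images of paths through zero-weight vertices do not disturb the $\theta$-value computation while still being needed to close up $W'$ into an honest subrepresentation — in particular that composing paths $x_i \to y \to y_j$ through an intermediate zero-weight vertex $y$ is already accounted for inside $\mathcal{P}_{i,j}$, so that $V = \sum_\varphi A(\varphi)(U)$ genuinely equals $\bigoplus_j (W'_{y_j})^{\theta_-(y_j)}$.
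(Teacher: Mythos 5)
Your proposal is correct and follows essentially the same route as the paper: verify the subrepresentation property by reducing every arrow to images of paths out of nonzero-weight vertices and invoking the $\Endo(P_1)$- and $\Endo(P_0)$-module structure from \Cref{subrep_preconstruction}, compute $\theta(W')=\dim U-\dim\mathcal{A}_{W,\theta}(U)=c$ from parts (1) and (2) of that proposition, and conclude by squeezing against \Cref{rep_disc_bounded_by_mat_disc}. Your observation that the displayed formulas for $W'_{y_j}$ and $W'_y$ should read $W(p)(W'_{x_i})$ rather than $W(p)(W_{x_i})$ is a correct reading of a typo in the statement, consistent with \Cref{subrep_preconstruction}(2).
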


\begin{proof}
We first verify that $\{W'_{x}\}_{x\in Q_0}$ forms a subrepresentation $W'$ of $W$. We need to show that $W(a)(W'_{ta})\subset W'_{ha}$ for all $a\in Q_1$. Due to the construction of $W'$, it is more convenient to work with paths.  We split the paths in $Q$ into three major categories.

Case 1: The path starts at some $x_{i'}$. We consider three possibilities:
\begin{itemize}
\item Case (1a): The path ends at some $x_i$.
\item Case (1b): The path ends at some $y_{j}$.
\item Case (1c): The path ends at some $y$.
\end{itemize}
For case (1a), let $p:x_{i'}\rightarrow x_i$ be a path. We have seen in the proof of \Cref{subrep_preconstruction} that $W(p)(W_{x_{i'}})$, when placed in any $r$-th component of $W_{x_i}^{\theta_{+}(x_i)}$, is still in $U$. By property (1) of \Cref{subrep_preconstruction}, $W(p)(W'_{x_{i'}})\subset W'_{x_i}$. This finishes case (1a). Case (1b) follows from the description of $W'_{y_j}$ in \Cref{subrep_preconstruction}. Case (1c) follows from the construction of $W'_{y}$.

Case 2: The path starts at some $y_{j'}$. Again we consider three possibilities:
\begin{itemize}
\item Case (2a): The path ends at some $x_{i}$.
\item Case (2b): The path ends at some $y_{j}$.
\item Case (2c): The path ends at some $y$.
\end{itemize}
For case (2a), if there are no paths from any $x_{i'}$ into $y_{j}$, then $W'_{y_{j}}$ is the zero subspace, which is a trivial scenario. If there are paths from some $x_{i'}$ into $y_{j}$, the case is covered by case (1a). Case (2b) can be established using the fact that $V$ is an $\Endo(P_0)$-module like what we did for case (1a). Finally, case (2c) follows from the construction of $W'_{y}$ as well.

Case 3: The path starts at some $y'$. We consider:
\begin{itemize}
\item Case (3a): The path ends at some $x_{i}$
\item Case (3b): The path ends at some some $y_{j}$.
\item Case (3c): The path ends at some $y$.
\end{itemize}
If there are no paths from either $x_{i'}$ or $y_{j'}$ into $y$, then $W'_{y}$ must be the zero subspace, which is a trivial scenario. If there is a path from some $x_{i'}$ or some $y_{j'}$ into $y$, case (3a) is covered by case (1a) and case (2a). Similarly case (3b) is covered by case (1b) and case (2b). Finally, case (3c) is covered by the construction of $W'_{y}$.

We have shown that $\{W'_{x}\}_{x\in Q_0}$ forms a subrepresentation $W'$ of $W$. Next, note that $$\theta(W')=\sum_{i}(\dim W'_{x_i})\cdot \theta_{+}(x_i)-\sum_{j}(\dim W'_{y_j})\cdot \theta_{-}(y_j)=\dim U -\dim \mathcal{A}_{W,\theta}(U)$$ by \Cref{subrep_preconstruction}. By \Cref{rep_disc_bounded_by_mat_disc}, it must be the case that $\theta(W')=\disc(W,\theta)=\disc(\mathcal{A}_{W,\theta}).$
\end{proof}

We are now able to establish the algorithm of \Cref{discrepancy_rep_algorithm}, which returns the discrepancy, together with a witness for any representation of an acyclic quiver over an infinite field. 

\begin{corollary}\label{proposition3.5_hus}
Let $Q$ be an acylcic quiver and let $W$ be a representation as above. There is a deterministic algorithm that finds the discrepancy $\disc(W,\theta)$, together with a subrepresentation $W'$ so that $\theta(W')=\disc(W,\theta)$. If we set  $\Omega=\sum_{x\in Q_0}|\theta(x)|$, $K=\sum_{x\in Q_0}\dim W_{x}$, and $P$ as the number of paths in $Q$, then the algorithm has time complexity that is polynomial in $\Omega,K,P$. 
\end{corollary}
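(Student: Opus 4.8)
The plan is to assemble the pieces already assembled in this appendix into a three-phase algorithm, and then to do the complexity bookkeeping. Given the input $(Q,W,\theta)$ with $\theta(\mathbf{d})=0$, the algorithm would: (i) construct the matrix space $\mathcal{A}_{W,\theta}\subset M(N,\mathbf{F})$ explicitly; (ii) apply the deterministic algorithm of \Cref{thm_1.5_ISQ18} to $\mathcal{A}_{W,\theta}$ to obtain $c=\disc(\mathcal{A}_{W,\theta})$ together with the minimal $c$-shrunk subspace $U$; and (iii) feed $U$ into the explicit recipe of \Cref{from_mat_disc_to_rep_disc} to produce the subspaces $\{W'_x\}_{x\in Q_0}$ of a subrepresentation $W'$. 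Correctness is immediate from the results already proved: \Cref{from_mat_disc_to_rep_disc} guarantees that $W'$ is a genuine subrepresentation and that $\theta(W')=c=\disc(W,\theta)$, so $W'$ witnesses the discrepancy and $c$ is its value. The only hypothesis on the ground field is that it be infinite, which is exactly what \Cref{thm_1.5_ISQ18} needs.

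The bulk of the work is then bounding the running time. First I would bound the ambient size: since $N=\sum_i\theta_+(x_i)\mathbf{d}(x_i)\le\big(\sum_i\theta_+(x_i)\big)\max_i\mathbf{d}(x_i)\le\Omega K$, and likewise $M,M'\le\Omega$, every matrix manipulated in phases (ii)--(iii) has size at most $\Omega K$. For phase (i), I would first compute $W(p)$ for every path $p$ of $Q$ by dynamic programming along the (finite, acyclic) path structure, using the factorization $W(p)=W(p')W(a)$ for $p=p'a$; this is $O(P)$ matrix products, each of cost $O(K^3)$. I would then enumerate the index set $\{(i,j,q,r,p)\}$, whose size is $O(P\cdot M\cdot M')=O(P\,\Omega^2)$, and place the already-computed matrix $W(p)$ into the prescribed $(q,r)$-block of an $N\times N$ matrix, at cost $O(N^2)$ per generator. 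So phase (i) is polynomial in $\Omega,K,P$. Phase (ii) costs $N^{O(1)}=(\Omega K)^{O(1)}$ arithmetic operations by \Cref{thm_1.5_ISQ18}. Phase (iii) requires, for each vertex, taking projections of $U$ onto components and images of the resulting subspaces under the various $W(p)$ and summing them; this is again $O(P)$ linear-algebra operations on vectors/matrices of size at most $\Omega K$. Summing the three contributions gives a running time polynomial in $\Omega,K,P$.

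The point I expect to require the most care — and the only genuinely conceptual point in an otherwise routine assembly — is the role of $P$. The matrix space $\mathcal{A}_{W,\theta}$ carries one family of generators for (essentially) every path of $Q$, and for a general acyclic quiver the number of paths $P$ can be exponential in the number of arrows $|Q_1|$; so $P$ must be admitted among the size parameters, and one has to be careful that no step of the algorithm is controlled by a quantity larger than a polynomial in $\Omega, K, P$ (in particular, nothing should enumerate subspaces or supports). Once it is checked that constructing $\mathcal{A}_{W,\theta}$, running the shrunk-subspace algorithm, and translating back all stay within this bound, the proof is complete; I do not anticipate any further obstacle.
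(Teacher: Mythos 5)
Your proposal is correct and follows essentially the same route as the paper: build $\mathcal{A}_{W,\theta}$ by computing all $W(p)$ incrementally along paths, run the shrunk-subspace algorithm of \Cref{thm_1.5_ISQ18} on it, and translate the minimal $c$-shrunk subspace back into a subrepresentation via \Cref{from_mat_disc_to_rep_disc}, with the same bound $N\leq\Omega K$ controlling the bookkeeping. Your closing observation about $P$ (rather than $|Q_1|$) being the honest size parameter matches the paper's own caveat in the introduction.
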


\begin{proof}
We propose the following three steps to find the discrepancy of $W$, together with a witnessing subrepresentation. 
As step 1, we compute a basis for the matrix space $$\mathcal{A}_{W,\theta}=\bigoplus_{i,j}\bigoplus_{q,r}\Span(A^{i,j,p}_{q,r}\mid p\in\mathcal{P}_{ij}),$$ where each $A^{i,j,p}_{q,r}$ is $W(p)$ placed in the $(q,r)$-block. If $p$ has length $l$, then $W(p)$ involves $l-1$ multiplications of matrices of sizes taken from a subset of $\{\dim W_x\}_{x\in Q_0}$. Iterating through paths in increasing lengths, computing $W(p)$ for all $p\in\mathcal{P}_{i,j}$ has time complexity polynomial in $K$, and linear in $P$. We then sort out linearly independent size $\dim W(y_j)\times \dim W(x_i)$ matrices among $\{W(p)\mid p\in \mathcal{P}_{i,j}\}$, which has time complexity polynomial in $K,P$. Finally ,we put these linearly independent matrices into appropriate $(q,r)$-blocks for all $q\in I^{+}_{j}$, $r\in I^{+}_i$, and for all $i,j$. There are $M\times M'$ blocks so the time complexity to compute a basis for $\mathcal{A}_{W,\theta}$ is polynomial in $\Omega,K,P$. 

As step 2, apply the algorithm of \Cref{thm_1.5_ISQ18} to obtain a basis of the minimal $c$-shrunk subspace $U$ of $\mathcal{A}_{W,\theta}$. The time complexity is polynomial in $N$, which in turn is bounded by a polynomial in $\Omega\cdot K$.

As step 3, we apply \Cref{from_mat_disc_to_rep_disc} to derive a basis for $W'_{x}$ for each $x\in Q_0$. A spanning set for each $W'_{x_i}$ is obtained by truncating suitable parts of the basis of $U$ we obtained earlier. We can then trim this spanning set down to a basis within time complexity polynomial in $\dim W_{x_i}$ and $N$, which is bounded by a polynomial in $\Omega,K$. Since there are at most $\Omega$ many $x_i'$s, the time complexity to obtain bases for all $W'_{x_i}$ is bounded by a polynomial in $\Omega,K$. For $W'_{y_j}$, we can first compute $\sum_{i}\sum_{p\in P_{i,j}}W(p)(W'_{x_i})$. In this case, since each $W(p)$ is computed earlier, a spanning set of $W(p)(W'_{x_i})$ is obtained by multiplying $W(p)$ with the basis matrix of $W'_{x_i}$. Hence a spanning set for $W'(y_j)$ can be obtained within time complexity polynomial in $K,P$. Since there are at most $P\cdot\sum_{i}\dim W'_{x_i}$ vectors in the spanning set for each $W'(y_j)$, deriving a basis for $W'_{y_{j}}$ has time complexity polynomial in $K,P$. Since the number of $y_{j}$'s is bounded by $\Omega$, we may infer that computing bases for all $W'_{y_j}$ has time complexity polynomial in $\Omega,K,P$. Similarly, computing bases for all $W'_{y}$ with $\theta(y)=0$ has time complexity polynomial in $\Omega,K,P$.

Since the three major steps are all within time complexity polynomial in $\Omega,K,P$. The corollary is proved.
\end{proof}

\end{document}